\let\@fnsymbol\@arabic\makeatother
\definecolor{marin}{rgb}{0.,0.3,0.7}
\newcommand{\al}{\alpha}
\newcommand{\de}{\delta}
\newcommand{\ka}{\kappa}
\newcommand{\si}{\sigma}
\newcommand{\ph}{\varphi}
\newcommand{\Om}{\Omega}
\newcommand{\Z}{\mathbb{Z}}
\newcommand{\R}{\mathbb{R}}
\newcommand{\C}{\mathbb{C}}
\newcommand{\Ec}{\mathcal{E}}
\newcommand{\Pc}{\mathcal{P}}
\newcommand{\Rc}{\mathcal{R}}
\newcommand{\Uc}{\mathcal{U}}
\providecommand{\abs}[1]{\lvert#1\rvert}
\providecommand{\absbig}[1]{\bigl\lvert#1\bigr\rvert}
\providecommand{\absBig}[1]{\Bigl\lvert#1\Bigr\rvert}
\providecommand{\absbigg}[1]{\biggl\lvert#1\biggr\rvert}
\providecommand{\norm}[1]{\lVert#1\rVert}
\providecommand{\normbig}[1]{\bigl\lVert#1\bigr\rVert}
\providecommand{\normbigg}[1]{\biggl\lVert#1\biggr\rVert}
\providecommand{\normv}[1]{\ensuremath{{\lVert\hskip-1pt\lvert}#1{\rvert\hskip-1pt\rVert}}}
\providecommand{\normvbig}[1]{\ensuremath{{\bigl\lVert\hskip-1pt\bigl\lvert}#1{\bigr\rvert\hskip-1pt\bigr\rVert}}}
\providecommand{\normvbigg}[1]{\ensuremath{{\biggl\lVert\hskip-1pt\biggl\lvert}#1{\biggr\rvert\hskip-1pt\biggr\rVert}}}
\providecommand{\skla}[1]{\langle#1\rangle}
\providecommand{\sklabig}[1]{\bigl\langle#1\bigr\rangle}
\providecommand{\kla}[1]{(#1)}
\providecommand{\klabig}[1]{\bigl(#1\bigr)}
\providecommand{\klaBig}[1]{\Bigl(#1\Bigr)}
\providecommand{\klabigg}[1]{\biggl(#1\biggr)}
\newcommand{\formulatext}[1]{\qquad\text{#1}\qquad}
\newcommand\myfor{\formulatext{for}}
\newcommand\myforall{\formulatext{for all}}
\newcommand\myand{\formulatext{and}}
\newcommand\mywith{\formulatext{with}}
\newcommand{\iu}{\mathrm{i}}
\newcommand{\e}{\mathrm{e}}
\newcommand{\drm}{\mathrm{d}}
\DeclareMathOperator{\sinc}{sinc}
\DeclareMathOperator{\Op}{Op}
\newtheorem{theorem}{Theorem}[section]
\newtheorem{lemma}[theorem]{Lemma}
\newtheorem{proposition}[theorem]{Proposition}
\theoremstyle{definition}
\newtheorem{assum}{Assumption}
\newtheorem{remark}[theorem]{Remark}
\newtheorem{example}[theorem]{Example}
\definecolor{mygreen}{rgb}{0.,0.7,0.3}
\definecolor{myorange}{rgb}{0.9,0.5,0.0}
\definecolor{myred}{rgb}{0.9,0.2,0.2}
\title{Trigonometric integrators\\ for quasilinear wave equations}
\author{Ludwig Gauckler\,\thanks{Institut f\"ur Mathematik,
          Freie Universit\"at Berlin,
          Arnimallee 9,
          D-14195 Berlin, Germany
          ({\tt gauckler@math.fu-berlin.de}).}
         \and
         Jianfeng Lu\,\thanks{Departments of Mathematics, Physics, and
           Chemistry,
           Duke University, 
           Box 90320, 
           Durham, NC 27708, USA
           ({\tt jianfeng@math.duke.edu}).}
         \and
         Jeremy L.\ Marzuola\,\thanks{Department of Mathematics,
           UNC-Chapel Hill, 
           CB\#3250 Phillips Hall, 
           Chapel Hill, NC 27599, USA
           ({\tt marzuola@math.unc.edu}).}
         \and
         Fr\'ed\'eric Rousset\,\thanks{Laboratoire de Math\'ematiques d'Orsay (UMR 8628), Universit\'e Paris-Sud, 91405 Orsay Cedex, France  and Institut Universitaire de France 
          ({\tt frederic.rousset@math.u-psud.fr}).}
          \and
         Katharina Schratz\,\thanks{Fakult\"at f\"ur Mathematik, 
           Karlsruhe Institute of Technology, 
           Englerstr.\ 2, 
           D-76131 Karlsruhe, Germany
           ({\tt katharina.schratz@kit.edu}).}
}
\begin{document}

\maketitle

\begin{abstract}
Trigonometric time integrators are introduced as a class of explicit numerical methods for  quasilinear wave equations. Second-order convergence for the semi-discretization in time with these integrators is shown for a sufficiently regular exact solution. The time integrators are also combined with a Fourier spectral method into a fully discrete scheme, for which error bounds are provided without requiring any CFL-type coupling of the discretization parameters. The proofs of the error bounds are based on energy techniques and on  the semiclassical G\aa rding inequality. \\[1.5ex]
\textbf{Mathematics Subject Classification (2010):} 
65M15, 
65P10, 
65L70, 
65M20.\\[1.5ex] 
\textbf{Keywords:} Quasilinear wave equation, trigonometric integrators, exponential integrators, error bounds, loss of derivatives, energy estimates.
\end{abstract}

\section{Introduction}

The topic of the present paper is the numerical analysis of \emph{quasilinear wave equations}. Such wave equations show up in a variety of applications, ranging from elastodynamics to general relativity.
While the (local-in-time) analysis of them is well-developed since the seventies, the papers \cite{Kato1975} by Kato and \cite{Hughes1976} by Hughes, Kato \& Marsden being major contributions to the local well-posedness theory, and has meanwhile found its way into classical monographs on partial differential equation, see, for instance, the monograph \cite{Taylor1991} by Taylor, as well as the books by Sogge \cite{sogge1995lectures} and H\"ormander \cite{lars1997lectures}, the numerical analysis of quasilinear wave equations is much less developed. The main challenge is, of course, the numerical treatment of the quasilinear term in the equation.

In the present paper, we focus on quasilinear wave equations of the form
\begin{equation}\label{eq-qlw}
\partial_{t}^2 u = \partial_{x}^2 u - u + \ka \,a(u) \, \partial_{x}^2 u + \ka \, g\bigl(u,\partial_x u\bigr)
\end{equation}
where  $g$ and $a$ are  smooth and real-valued functions such that $g(0, 0)=a(0)=0$. We consider real-valued solutions to \eqref{eq-qlw} with $2\pi$-periodic boundary conditions in one space dimension, $x\in\mathbb{T}=\R/(2\pi\Z)$, for initial values
\begin{equation}\label{eq-qlw-init}
u(\cdot,0) = u_0, \qquad \partial_tu(\cdot,0) = \dot{u}_0
\end{equation}
given at time $t=0$.  The  real-valued  parameter $\kappa$ will be used to emphasize the strength of the nonlinearities,  and 
we will be interested both in the regime  where $\kappa$ is small so that the nonlinearities are small
 and the regime where $\kappa$ is of order one. Quasilinear wave equations of this form with small $\abs{\ka}$ have been extensively studied by Groves \& Schneider \cite{Groves2005}, Chong \& Schneider \cite{Chong2013}, Chirilus-Bruckner, D\"ull \& Schneider \cite{Chirilus-Bruckner2015} and D\"ull \cite{Duell2016}:  the equations from the class \eqref{eq-qlw} are prototypes for  models in nonlinear optics with a nonlinear Schr\"odinger equation as a modulation equation \cite[Introduction]{Groves2005}.  Many examples from elasticity and fluid mechanics can also be reduced to quasilinear wave
equations under the form \eqref{eq-qlw}.   Relevant applications from elasticity and general relativity appeared in \cite{Hughes1976} for instance, though of course many of the most physically interesting examples occur in dimensions $2$ and include potential dependence upon $\partial_x u$ in $a$ and possible dependence upon $\partial_t u$.  Using the techniques developed here to study smooth solutions in relevant higher dimensional models will be a topic for future work and would likely require some higher regularity assumptions on the solutions. 

\medskip

The principal difficulty in the numerical discretization of \eqref{eq-qlw} is the quasilinear term $\ka a(u) \partial_{x}^2 u$. For typical explicit methods for the discretization in time, in which the numerical approximation at a discrete time depends in an explicit way on this quasilinear term at some previous time step, there is a risk of \emph{losing derivatives}, in the sense that a control of a certain number of spatial derivatives of the numerical solution requires the control of \emph{more} derivatives of the numerical solution at previous time steps. This phenomenon is by far not restricted to quasilinear wave equations, and an established way to prevent a loss of derivatives is to resort to carefully chosen implicit methods. 
In the case of quasilinear wave equations, this route has been taken recently by Hochbruck \& Pa\v{z}ur \cite{Hochbruck2015} and Kov\'{a}cs \& Lubich \cite{Kovacs}, who propose and study implicit and semi-implicit methods of Runge--Kutta type for semi-discretization in time 
for a more general class of quasilinear evolution  equations.  

In the present paper, we take another route and show how a class of \emph{explicit} time discretizations can be used to numerically solve the quasilinear wave equation \eqref{eq-qlw} (and also how it can be combined with a Fourier spectral method in space). The considered class of methods is the class of \emph{trigonometric integrators}, which is described in detail in Section \ref{sec-method}. These methods are exponential integrators and have been originally developed for highly oscillatory ordinary differential equations, see, for instance, \cite{GarciaArchilla1999} or Chapter XIII of the monograph \cite{Hairer2006}. Meanwhile, they were recognized to work well also for wave equations in the semilinear case, see \cite{Bao2012,Cano2010a,Cano2013,Cohen2008a,Dong2014,Gauckler2015,Gauckler}. We show here, how these explicit methods can be put to use also in the quasilinear case. A careful choice of the filters in these integrators turns out to play a crucial role in avoiding the above-mentioned loss of derivatives. 

For the considered and derived trigonometric integrators, we rigorously prove second-order convergence in time. We also prove convergence of a fully discrete method which is based on a combination with a spectral discretization in space, without requiring any CFL-type coupling of the discretization parameters. See Section \ref{sec-results} for statements of the error bounds. 
The proofs of our convergence results are based on \emph{energy techniques} as they are widely used in mathematical analysis to prove well-posedness of quasilinear equations and also well-established in the numerical analysis of quasilinear \emph{parabolic} equations, see, e.g., \cite{Lubich1995}. Furthermore, they have been applied in the recent analysis of \emph{implicit} methods for quasilinear hyperbolic equations in \cite{Hochbruck2015} for (semi-)implicit Euler methods and in \cite{Kovacs} for (linearly) implicit midpoint methods and implicit Runge--Kutta methods.

Here, we are interested in the analysis of \emph{explicit} trigonometric integrators for quasilinear wave equations in the form
\eqref{eq-qlw}. Note that in contrast to the semilinear case studied in \cite{Bao2012,Cano2010a,Cano2013,Cohen2008a,Dong2014,Gauckler2015,Gauckler},  the proof uses   energy techniques with a nontrivial modified discrete energy to prove stability of the methods under appropriate assumptions on the filters. In addition, in the case of non-small $\ka$, we also need tools from semiclassical pseudodifferential calculus to ensure that   the  modified discrete energy is positive.

We mention that recently in \cite{gonzalez2015higher,gonzalez2016higher} explicit exponential integrators for quasilinear \emph{parabolic} problems in Banach spaces were considered.  Analysis of quasilinear parabolic equations can generally be done using simpler techniques stemming from the regularization implicit in the diffusion operators, but quasilinear waves must be handled with more care given the lack of smoothing properties of the leading order operator. We also point out that the exponential integrators in \cite{gonzalez2015higher,gonzalez2016higher} are based on solving exactly a differential equation with the linearization of the quasilinear part on the right, whereas the trigonometric integrators considered in the present paper are based on solving exactly a differential equation with the pure linear part $\partial_x^2u-u$ on the right, which is usually simpler from a computational point of view.

\medskip

The methods and their analysis as presented in the present paper can be extended to higher spatial dimensions or to quasilinear wave equations \eqref{eq-qlw} without Klein--Gordon term $-u$ on the right-hand side.
Moreover, we could also only assume that $a$ and $g$ are smooth  on an open subset and deal with smooth solutions
that stay in this subset on  the considered time interval.  In this way,  our scheme can be used to approximate  the classical $p$-system of elasticity and gas dynamics (as long as the solutions are smooth and with no vacuum).  See \cite{LPT} for instance for a discussion of this model.  It would be interesting to see if  our methods  and their analysis  can be extended to quasilinear wave equations \eqref{eq-qlw} with 
  a  semilinear  term $g$ that depends also on $\partial_{t} u$ (for example in order  to handle the  equations considered in \cite{Doerfler2016}), to    the more abstract  classes of equations considered in \cite{Hochbruck2015,Kovacs}, or to other equations with a possible loss of derivatives in explicit numerical methods, such as quasilinear Schr\"odinger equations as considered in \cite{Lu2015}.
  
\medskip

The article is organized as follows. In Section \ref{sec-method}, we introduce the considered trigonometric integrators, for which we state global error bounds in Section \ref{sec-results}. The proof of the error bound for the semi-discretization in time is given in Section \ref{sec-error}, and the one for the full discretization in Section \ref{sec-error-full}. The necessary tools from semiclassical pseudodifferential calculus are collected in the appendix.

\medskip

\noindent\textbf{Notation.} By $H^s=H^s(\mathbb{T})$, $s\ge 0$, we denote the usual Sobolev space, equipped with the norm $\norm{\cdot}_s$ given by
\[
\norm{v}_s^2 = \sum_{j\in\Z} \skla{j}^{2s} \abs{\hat{v}_j}^2 \myfor  v(x)=\sum_{j\in\Z} \hat{v}_j \e^{\iu jx}
\]
with the weights
\[
\skla{j} = \sqrt{j^2+1}, \qquad j\in\Z. 
\]
By $\skla{\cdot,\cdot}_s$, we denote the corresponding scalar product, 
\[
\skla{v,w}_s = \sum_{j\in\Z} \skla{j}^{2s} \overline{\hat{v}}_j \hat{w}_j \myfor v(x)=\sum_{j\in\Z} \hat{v}_j \e^{\iu jx}, \quad w(x)=\sum_{j\in\Z} \hat{w}_j \e^{\iu jx}.
\]
We study solutions $(u(\cdot,t),\partial_t u(\cdot,t))$ of the quasilinear wave equation \eqref{eq-qlw} in product spaces $H^{s+1}\times H^s$, on which we use the norm
\[
\normv{(u,\dot{u})}_s = \klabig{\norm{u}_{s+1}^2+\norm{\dot{u}}_s^2}^{1/2}. 
\]
For $s>\frac12$, we will make frequent use of the classical  estimates in Sobolev spaces
\begin{subequations}\label{eq-algebra}
\begin{equation}\label{eq-algebra1}
\norm{uv}_0 \le C_s \norm{u}_0 \norm{v}_s, \qquad  \norm{u v}_s \le C_s \norm{u}_s \norm{v}_s
 \end{equation}
 and
\begin{equation}\label{eq-algebra-plus}
\|G(u)\|_{s} \leq \Lambda_{s}(\|u\|_{s}) \|u\|_{s}, \qquad \|G(u)-G(v)\|_{s} \leq \Lambda_{s}(\|u\|_{s}+\norm{v}_s) \|u-v\|_{s},
\end{equation}
\end{subequations}
where  $G$  is any smooth function such that  $G(0)=0$ and $\Lambda_{s}(\cdot)$  is a continuous
 non-decreasing function, see for instance \cite{Taylor11}, Chapter 13, Section 3.

\section{Discretization of quasilinear wave equations}\label{sec-method}

\subsection{Trigonometric integrators for the discretization in time}\label{subsec-trigo}

The quasilinear wave equation \eqref{eq-qlw} can be written in compact form as
\begin{equation}\label{eq-qlw-compact}
\partial_{t}^2 u = - \Om^2 u + \ka f(u).
\end{equation}
with the nonlinearity
\begin{equation}\label{eq-f}
f(u) = a(u)\partial_{x}^2 u + g\bigl(u,\partial_x u\bigr)
\end{equation}
and the linear operator
\[
\Om = \sqrt{-\partial_{x}^2+1},
\]
that is, the operator $\Om$ acts on a function by multiplication of the $j$th Fourier coefficient with $\sqrt{j^2+1}$. 

For the discretization in time of \eqref{eq-qlw-compact}, we use \emph{trigonometric integrators}, see \cite[Section XIII.2.2]{Hairer2006}. We introduce them here as splitting integrators as in \cite{GarciaArchilla1999}, since this interpretation is convenient for the error analysis to be presented in this paper. Written in first-order form $\partial_t (u,\dot{u}) = (\dot{u}, -\Om^2u + \ka f(u))$, equation \eqref{eq-qlw-compact} is split into
\[
\partial_t \begin{pmatrix}u\\ \dot{u}\end{pmatrix} = \begin{pmatrix}\dot{u}\\ -\Om^2u\end{pmatrix} \qquad\text{and}\qquad \partial_t \begin{pmatrix}u\\ \dot{u}\end{pmatrix} = \begin{pmatrix}0\\ \ka f(u)\end{pmatrix}, 
\]
and the usual Strang splitting is applied with time step-size $\tau$. In addition, the nonlinearity $f(u)$ of \eqref{eq-f} is replaced by 
\begin{equation}\label{eq-fhat}
\widehat{f}(u) = \Psi_1 f(\Phi u),
\end{equation}
where
\[
\Psi_1=\psi_1(\tau\Om) \myand \Phi=\phi(\tau\Om)
\]
are filter operators computed from suitably chosen filter functions $\psi_1$ and $\phi$.   Throughout, we assume that the filter functions are bounded and continuously differentiable with bounded derivative.   Denoting by $u_n$ and $\dot{u}_n$ approximations to $u(\cdot,t_n)$ and $\partial_t u(\cdot,t_n)$ at time $t_n=n\tau$, the numerical method thus reads
\begin{equation}\label{eq-method}\begin{split}
\dot{u}_{n}^+ &= \dot{u}_{n} + \tfrac12 \tau \ka \widehat{f}(u_n),\\
\begin{pmatrix}\Om u_{n+1}\\ \dot{u}_{n+1}^-\end{pmatrix} &= \begin{pmatrix}\cos(\tau\Om) & \sin(\tau\Om)\\ -\sin(\tau\Om) & \cos(\tau\Om)\end{pmatrix} \, \begin{pmatrix}\Om u_{n}\\ \dot{u}_{n}^+\end{pmatrix},\\
\dot{u}_{n+1} &= \dot{u}_{n+1}^- + \tfrac12 \tau \ka \widehat{f}(u_{n+1}).
\end{split}\end{equation}
By eliminating the intermediate values $\dot{u}_{n}^+$ and $\dot{u}_{n+1}^-$, one time step of the method is seen to be given by
\begin{equation}\label{eq-method-onestep}\begin{split}
u_{n+1} &= \cos(\tau\Om) u_n + \tau \sinc(\tau\Om) \dot{u}_n + \tfrac12 \tau^2 \sinc(\tau\Om) \ka \widehat{f}(u_n),\\
\dot{u}_{n+1} &= -\Om \sin(\tau\Om) u_n + \cos(\tau\Om) \dot{u}_n + \tfrac12 \tau \cos(\tau\Om) \ka \widehat{f}(u_n) + \tfrac12 \tau \ka \widehat{f}(u_{n+1}).
\end{split}\end{equation}
The numerical flow of this method is denoted in the following by $\ph_\tau$, i.e.,
\begin{equation}\label{eq-numflow}
(u_{n+1},\dot{u}_{n+1})=\ph_\tau(u_{n},\dot{u}_{n}).
\end{equation}

\subsection{On the filter functions}

We collect some assumptions on the filter functions $\phi$ and $\psi_1$ that are going to play an important role in the following. 

\begin{assum}
\label{assum_slw_psibds}
Already in the semilinear case, the bounds
\begin{subequations}\label{eq-assumfilter-bounds}
\begin{align}
\label{propdephi1}&\abs{\phi(\xi)}\le   1  & & \text{and} & & \abs{1-\phi(\xi)}\le c_0\xi^2 &&\text{for all} &&\xi\ge 0,\\
&\abs{\psi_1(\xi)}\le   1  & & \text{and} & & \abs{1-\psi_1(\xi)}\le c_0\xi^2 &&\text{for all} &&\xi\ge 0
\end{align}
\end{subequations}
with a constant $c_0$ are needed for finite-time error bounds, see \cite{Gauckler2015}. 
\end{assum}

\begin{assum}
\label{assum_qlw_psi}
In the quasilinear case, we need in addition that the functions $\phi$ and $\psi_1$ are continuous in $\xi$ and satisfy 
\begin{equation}\label{eq-assumfilter-psi1phi}
\psi_1(\xi) = \sinc(\xi) \phi(\xi) \myforall \xi\ge 0.
\end{equation}
\end{assum}

The condition in Assumption \ref{assum_qlw_psi} has been originally derived in a study on energy conservation properties of trigonometric integrators applied to linear oscillatory ordinary differential equations, see \cite[Equation (2.12)]{Hairer2000}. Surprisingly, it shows up here again in the somehow unrelated context of finite-time error bounds for quasilinear wave equations. 

\begin{assum}
\label{assum_lkap}
We finally need in addition to \eqref{eq-assumfilter-bounds} and \eqref{eq-assumfilter-psi1phi} that,   for prescribed $0<\de<1$ and $A_0\ge 0$ related to the size of $\kappa$ and $a$ in \eqref{eq-qlw} and the solution $u$ to \eqref{eq-qlw}, 
\begin{equation}\label{eq-assumfilter-largedata}
A_0 \sin(\tfrac12\xi)^2 \phi(\xi)^2 \le 1 - \de \myforall \xi\ge 0.
\end{equation}
\end{assum}

\begin{remark}[Small nonlinearity]
\label{rem3pt1}
  The parameters $\de$ and $A_0$ in Assumption \ref{assum_lkap} will be chosen later such that $1+\kappa\, a(u(x,t))\ge \de$ and $\kappa\, a(u(x,t))\le A_0$ for all $x\in \mathbb{T}$ and all $t$ from the time interval under consideration. In particular, $\de$ is small. In the regime $\abs{\kappa}\ll 1$ of a small nonlinearity in \eqref{eq-qlw}, also the value $A_0$ is small, and hence Assumption \ref{assum_lkap} is satisfied in this case for \emph{all} bounded filter functions $\phi$. 
The remaining Assumptions \ref{assum_slw_psibds} and \ref{assum_qlw_psi} are thus sufficient to prove error bounds for small $\abs{\kappa}$ in \eqref{eq-qlw}.   They are, for example, satisfied (with $c_0=1$) for the trigonometric integrator of Hairer \& Lubich \cite{Hairer2000}, where
\begin{equation}\label{eq-trigoHL}
\phi(\xi)=1, \qquad \psi_1(\xi)=\sinc(\xi),
\end{equation}
and the one of Grimm \& Hochbruck \cite{Grimm2006}, where
\begin{equation}\label{eq-trigoGH}
\phi(\xi)=\sinc(\xi), \qquad \psi_1(\xi)=\sinc(\xi)^2. 
\end{equation}
\end{remark}

\begin{remark}[Non-small nonlinearity]
\label{rem3pt2}
  For non-small $\abs{\kappa}$ in \eqref{eq-qlw}, the coefficient $A_0$ in \eqref{eq-assumfilter-largedata} is not small and \eqref{eq-assumfilter-largedata} not always true.  
A new method that we propose here for this case is the trigonometric integrator \eqref{eq-method} with filter functions
\begin{equation}\label{filt}
\phi(\xi) = \sinc(c\xi), \qquad \psi_1(\xi) = \sinc(\xi) \sinc(c\xi) 
\end{equation}
  with
\[
c \geq \frac12 \sqrt{\frac{A_0}{1-\de}}.
\]
Here, $0<\de<1$ and $A_0\ge 0$ are the numbers of Assumption \ref{assum_lkap}.   
This choice of filter functions satisfies Assumptions \ref{assum_slw_psibds} and \ref{assum_qlw_psi} (with $c_0=\max(1,(c^2+1)/6)$), but it also satisfies Assumption \ref{assum_lkap}.   The latter follows from
\[
A_0 \sin(\tfrac12\xi)^2 \phi(\xi)^2 = \frac{A_0}{4c^2} \sinc(\tfrac12\xi)^2 \sin(c\xi)^2 \le \frac{A_0}{4c^2} \le 1-\de.
\]
  Note that a filter function $\sinc(c\xi)$ can be motivated as an averaging of fast forces over a time interval of length $c\tau$, see \cite{GarciaArchilla1999} and \cite[Section XIII.1.4]{Hairer2006}. For $c=1$, the new method \eqref{filt} reduces to the method \eqref{eq-trigoGH} of Grimm \& Hochbruck.
\end{remark}


\subsection{A spectral Galerkin method for the discretization in space}\label{subsec-full}

For a full discretization of \eqref{eq-qlw-compact}, we combine the trigonometric integrators of the previous section with a spectral Galerkin method in space.

We denote by 
\[
\mathcal{V}^K = \Biggl\{ \, \sum_{j=-K}^{K} \hat{v}_j \e^{\iu jx} : \hat{v}_j\in\C \,\Biggr\}
\]
the space of trigonometric polynomials of degree $K$ and by 
\begin{equation}\label{eq-Pc}
\mathcal{P}^K(v) = \sum_{j=-K}^{K} \hat{v}_j \e^{\iu jx} \myfor v = \sum_{j=-\infty}^{\infty} \hat{v}_j \e^{\iu jx} \in L^2
\end{equation}
the $L^2$-orthogonal projection onto this ansatz space. In the semi-discretization in time \eqref{eq-method} or \eqref{eq-method-onestep}, we then replace the nonlinearity $\widehat{f}$ of \eqref{eq-fhat} by
\begin{equation}\label{nonf}
\widehat{f}^K(u) = \mathcal{P}^K \klabig{ \Psi_1 f^K(\Phi u) }
\end{equation}
with 
\begin{equation}\label{fK}
f^K(u) = a^K(u) \partial_x^2 u + g^K(u,\partial_x u), \qquad a^K = \mathcal{I}^K \circ a, \qquad g^K = \mathcal{I}^K \circ g,
\end{equation}
where $ \mathcal{I}^K$ denotes the trigonometric interpolation in the space $\mathcal{V}^K$ of trigonometric polynomials of degree $K$.

This gives the fully discrete method
\begin{equation}\label{eq-method-full}\begin{split}
u_{n+1}^K &= \cos(\tau\Om) u_n^K + \tau \sinc(\tau\Om) \dot{u}_n^K + \tfrac12 \tau^2 \sinc(\tau\Om) \ka \widehat{f}^K(u_n^K),\\
\dot{u}_{n+1}^K &= -\Om \sin(\tau\Om) u_n^K + \cos(\tau\Om) \dot{u}_n^K + \tfrac12 \tau \cos(\tau\Om) \ka \widehat{f}^K(u_n^K) + \tfrac12 \tau \ka \widehat{f}^K(u_{n+1}^K),
\end{split}\end{equation}
which computes approximations $u_n^K\in\mathcal{V}^K$ and $\dot{u}_n^K\in\mathcal{V}^K$ to $u(\cdot,t_n)$ and $\partial_t u(\cdot,t_n)$, respectively.
In addition, we replace the initial values $u_0$ and $\dot{u}_0$ of \eqref{eq-qlw-init} by some approximations $u_0^K\in\mathcal{V}^K$ and $\dot{u}_0^K\in\mathcal{V}^K$, computed by an $L^2$-orthogonal projection onto $\mathcal{V}^K$: 
\[
u_0^K = \Pc^K(u_0), \qquad \dot{u}_0^K = \Pc^K(\dot{u}_0).
\]

We emphasize that the nonlinearity $\widehat{f}^K$ as appearing in \eqref{eq-method-full} can be computed efficiently using fast Fourier techniques: The functions $a^K=\mathcal{I}^K \circ a$ and $g^K=\mathcal{I}^K \circ g$ can be computed as usual with the fast Fourier transform. The full nonlinearity $\widehat{f}^K$ of \eqref{nonf} can then also be computed with fast Fourier techniques, even though it is defined via projection instead of trigonometric interpolation. This is based on the observation that the argument of the projection $\mathcal{P}^K$ in \eqref{nonf} as appearing in \eqref{eq-method-full} is a trigonometric polynomial of degree $2K$, and hence
\[
\widehat{f}^K(v^K) = \mathcal{P}^K \klaBig{ \mathcal{I}^{2 K} \klabig{\Psi_1 f^K(\Phi v^K) }}
\]
with the trigonometric interpolation $\mathcal{I}^{2 K}$ in the larger space $\mathcal{V}^{2 K}$ of trigonometric polynomials of degree $2 K$.

\section{Statement of global error bounds}\label{sec-results}

In this section, we state our error bounds for the trigonometric integrator \eqref{eq-method} and its fully discrete version \eqref{eq-method-full} when applied to the quasilinear wave equation \eqref{eq-qlw}. 

We will universally require Assumptions \ref{assum_slw_psibds}--\ref{assum_lkap} on the filter functions of the trigonometric integrator \eqref{eq-method}. In addition, we will require that the exact solution $u(x,t)$ to \eqref{eq-qlw} satisfies the following assumption. 

\begin{assum}\label{assum-exact}
  Let $s\ge 0$. We assume that the exact solution $(u(\cdot,t),\partial_t u(\cdot,t))$ to \eqref{eq-qlw} is in $H^{5+s}\times H^{4+s}$ with
\begin{equation}\label{eq-regularity}
\normvbig{\klabig{u(\cdot,t) ,\partial_t u(\cdot,t)} }_{4+s} \le M \myfor 0\le t \le T
\end{equation}
 such that
\begin{equation}\label{eq-delta}
1+ \ka \, a(u(\cdot,t)) \geq \delta > 0 \qquad \text{for}\qquad 0\le t\le T
\end{equation}
and 
\begin{equation}\label{eq-A0}
\ka \, a(u(\cdot,t)) \le A_0 \qquad \text{for}\qquad 0\le t\le T
\end{equation}
for some constants $0<\de<1$, $M>0$ and $A_0\ge 0$.
\end{assum}

\begin{remark}
The restriction \eqref{eq-delta} in Assumption \ref{assum-exact} is a natural assumption coming from the analysis of the equation. It ensures the hyperbolic character of the  equation. 
By local well-posedness theory, the regularity assumption \eqref{eq-regularity} (which implies \eqref{eq-A0}) on the exact solution holds locally in time for initial values in $H^{5+s}\times H^{4+s}$, see \cite{Hughes1976,Kato1975,Taylor1991}. The time-scale of our numerical analysis is the time-scale where a solution to \eqref{eq-qlw} actually exists and stays bounded. 
\end{remark}

We are now ready to state the main result for the semi-discretization in time \eqref{eq-method} whose proof is given in Section \ref{sec-error} below.

\begin{theorem}[Error bound for the semi-discretization in time]\label{thm-large}
  Fix $M>0$, $T>0$, $c_0\ge 0$, $0<\de<1$ and $A_0\ge 0$. 
Then, there exists a positive constant $\tau_0$ such that, for all time step-sizes $\tau\le\tau_0$, the following global error bound holds for the time-discrete numerical solution $(u_n,\dot{u}_n)$ of \eqref{eq-method}. 

If the exact solution $(u(\cdot,t),\partial_t u(\cdot,t))$ satisfies Assumption \ref{assum-exact} for $s=0$ with constants $M$, $T$, $\de$ and $A_0$, and if the filter functions in \eqref{eq-method} satisfy Assumption \ref{assum_slw_psibds}--\ref{assum_lkap} with constants $c_0$, $\de$ and $A_0$, 
then we have in $H^2\times H^1$ the global error bound
\[
\normvbig{\klabig{u_n,\dot{u}_n} - \klabig{u(\cdot,t_n),\partial_t u(\cdot,t_n)} }_{1} \le C \tau^2 \myfor 0\le t_n=n\tau \le T.
\]
The constant $C$ is of the form $C=C' \e^{C' \abs{\ka} T}$ with $C'$ depending on $\max(1,\abs{\kappa})$ with the coefficient $\kappa$ in \eqref{eq-qlw}, the smooth functions $a$ and $g$ in \eqref{eq-qlw}, the constant $c_0$ of \eqref{eq-assumfilter-bounds}, the constants $\de$ and $A_0$ of \eqref{eq-assumfilter-largedata}, \eqref{eq-delta} and \eqref{eq-A0} and $M$ from \eqref{eq-regularity}, but it is independent of the time step-size $\tau$, the final time $T$ and the parameter $\ka$ in \eqref{eq-qlw}. 
\end{theorem}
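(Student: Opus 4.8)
The plan is to combine a consistency estimate with a discrete-energy stability estimate in a Lady-Windermere telescoping argument. Write $e_n=u_n-u(\cdot,t_n)$ and $\dot e_n=\dot u_n-\partial_t u(\cdot,t_n)$ for the position and velocity errors and let $\ph_\tau$ be the numerical flow of \eqref{eq-numflow}. Decompose one step as
\[
(e_{n+1},\dot e_{n+1})=\ph_\tau(u_n,\dot u_n)-\ph_\tau\klabig{u(\cdot,t_n),\partial_t u(\cdot,t_n)}+d_{n+1},
\]
where $d_{n+1}=\ph_\tau\klabig{u(\cdot,t_n),\partial_t u(\cdot,t_n)}-\klabig{u(\cdot,t_{n+1}),\partial_t u(\cdot,t_{n+1})}$ is the defect of one step of \eqref{eq-method-onestep}. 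A stability estimate controls the propagated error (the first difference), consistency controls $d_{n+1}$, and a discrete Gronwall inequality closes the argument. Since the initial data of the time semi-discretization are exact, there is no starting error, so the whole $O(\tau^2)$ comes from accumulating the local defects.

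\textbf{Consistency.} First I would Taylor-expand the exact solution in \eqref{eq-method-onestep} about $t_n$. Each time derivative of order $\ge2$ is eliminated through $\partial_t^2u=-\Om^2u+\ka f(u)$, trading two time derivatives for spatial derivatives via $f$; since $f$ costs two spatial derivatives and we want $\normvbig{d_{n+1}}_1=O(\tau^3)$, this is exactly why the regularity $H^{5}\times H^{4}$ of \eqref{eq-regularity} is required (for $s=0$). The filter bounds $\abs{1-\phi(\xi)}\le c_0\xi^2$ and $\abs{1-\psi_1(\xi)}\le c_0\xi^2$ from \eqref{eq-assumfilter-bounds} ensure that replacing $f(u)$ by $\widehat f(u)=\Psi_1 f(\Phi u)$ perturbs the nonlinearity only by $O(\tau^2)$ in $H^1$, hence by $O(\tau^3)$ in the defect after the prefactor $\tau$; the identity $\psi_1=\sinc\cdot\phi$ of \eqref{eq-assumfilter-psi1phi} makes the position update and the filter in the argument of $f$ compatible, so that no uncancelled second-order term survives. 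Together with \eqref{eq-algebra} this yields $\normvbig{d_{n+1}}_1\le C\tau^3$.

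\textbf{Stability via a modified discrete energy.} This is the heart of the proof and the main obstacle. Linearizing $\widehat f$ about the exact solution, the derivative-losing contribution to the error equation is governed by the second-order operator $\ka\,\Psi_1\,a(\Phi u)\,\partial_x^2\,\Phi$ applied to the position error; a direct estimate in $\normv{\cdot}_1$ would lose one derivative per step. Instead I would introduce, at the level of $H^2\times H^1$, a modified discrete energy of the form
\[
\mathcal E_n=\norm{\Om\dot e_n}_0^2+\sklabig{(1+\ka\,a(u(\cdot,t_n)))\,\partial_x\Om e_n,\partial_x\Om e_n}_0+\norm{\Om e_n}_0^2+(\text{filter-dependent correction}),
\]
imitating the energy that makes the continuous quasilinear problem well posed: the weight $1+\ka a(u)$ is chosen so that, after discrete summation by parts in the energy increment, the top-order contribution of $a(u)\partial_x^2 e_n$ becomes a telescoping difference and only terms carrying at most one spatial derivative on $e_n$ remain, each bounded by $C(1+\abs\ka)\tau\,\mathcal E_n$. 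The discrete time-stepping produces, in place of $1+\ka a(u)$, a symbol weighted by $\sin(\tfrac12\tau\Om)^2\phi(\tau\Om)^2$; positivity of the resulting quadratic form, and hence the equivalence $\mathcal E_n\simeq\normvbig{(e_n,\dot e_n)}_1^2$, follows from \eqref{eq-delta} together with Assumption \ref{assum_lkap}, invoking the semiclassical G\aa rding inequality of the appendix to handle $a(u)$ as a variable-coefficient pseudodifferential symbol when $\abs\ka$ is not small. Because the linear trigonometric flow is an isometry for the energy, only the $\ka$-nonlinearity drives growth, which is the source of the one-step estimate
\[
\mathcal E_{n+1}\le(1+C\tau)\,\mathcal E_n+\frac{C}{\tau}\normvbig{d_{n+1}}_1^2
\]
with $C$ proportional to $\abs\ka$ in its leading part.

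\textbf{Conclusion.} Inserting $\normvbig{d_{n+1}}_1\le C\tau^3$ into the one-step estimate and applying the discrete Gronwall lemma (with $\mathcal E_0=0$) gives $\mathcal E_n\le C\,\e^{C\abs\ka t_n}\tau^4$, whence the energy equivalence yields $\normvbig{(e_n,\dot e_n)}_1\le C\,\e^{C'\abs\ka T}\tau^2$ for $0\le t_n\le T$, which is the asserted bound. Throughout, the smallness $\tau\le\tau_0$ and a standard bootstrap on $n$ are used to keep $(u_n,\dot u_n)$ in a fixed neighbourhood of the exact solution, so that the Sobolev estimates \eqref{eq-algebra}, the positivity \eqref{eq-delta}--\eqref{eq-A0}, and hence Assumption \ref{assum_lkap} hold with uniform constants along the numerical trajectory.
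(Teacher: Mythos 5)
Your proposal follows essentially the same route as the paper: the loss of derivatives in a naive $H^2\times H^1$ stability argument is circumvented by a modified discrete energy with quasilinear weight and filter-dependent corrections, its equivalence with the $H^2\times H^1$ norm is obtained from Assumption \ref{assum_lkap} via the semiclassical G\aa rding inequality, the local error is $O(\tau^3)$ under the $H^5\times H^4$ regularity, and the one-step recursion $\Ec_{n+1}\le(1+C\tau)\,\Ec_n+C\tau^{-1}\normv{(d_{n+1},\dot d_{n+1})}_1^2$ is closed by discrete Gronwall and a bootstrap, exactly as in Sections \ref{subsec-stability-small}--\ref{subsec-accumulation-small}. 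The only noteworthy deviation is that you weight the energy with the exact solution $a(u(\cdot,t_n))$ whereas the paper weights it with the numerical solution $a(u_n)$ (the choice for which the energy-increment identity of Lemma \ref{lemma-energy} is exact); this is a minor variant, absorbable into the $O(\tau)$ remainder terms inside your bootstrap.
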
 

\begin{remark}
  For small nonlinearities with $|\kappa| \ll 1$, we need only Assumptions \ref{assum_slw_psibds} and \ref{assum_qlw_psi} on the filter functions of the trigonometric integrator \eqref{eq-method} to prove the global error bound, as explained in Remark \ref{rem3pt1}. In this case, the necessary energy estimates can be proved and bounded in a simpler fashion and the underlying ellipticity of the second order operator is more easily proved on long time scales.  We will give some indication of these simplifications in Section \ref{sec-error}.  
\end{remark} 

For the fully discrete trigonometric integrator \eqref{eq-method-full}, we will prove in Section \ref{sec-error-full} the following global error bound.

\begin{theorem}[Error bound for the full discretization]\label{thm-full}
  Fix $M>0$, $T>0$, $c_0\ge 0$, $0<\de<1$, $A_0\ge 0$ and $s\ge 0$. 
Then, there exists a positive constant $\tau_0$ such that, for all time step-sizes $\tau\le\tau_0$, the following global error bound holds for the fully discrete numerical solution $(u^K_n,\dot{u}^K_n)$ of \eqref{eq-method-full}.

If the exact solution $(u(\cdot,t),\partial_t u(\cdot,t))$ satisfies Assumption \ref{assum-exact} for the above $s$ with constants $M$, $T$, $\de$ and $A_0$, and if the filter functions in \eqref{eq-method} satisfy Assumption \ref{assum_slw_psibds}--\ref{assum_lkap} with constants $c_0$, $\de$ and $A_0$, 
then we have in $H^2\times H^1$ the global error bound
\[
\normvbig{\klabig{u_n^K,\dot{u}_n^K} - \klabig{u(\cdot,t_n),\partial_t u(\cdot,t_n)} }_{1} \le C \tau^2 + C K^{-2-s} \myfor 0\le t_n=n\tau \le T.
\]
The constant $C$ is of the same form as in Theorem \ref{thm-large} with $C'$ depending in addition on $s$.
\end{theorem}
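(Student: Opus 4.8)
The plan is to follow the proof of Theorem~\ref{thm-large} for the semi-discretization in time, adapting its stability and consistency parts to the spatial discretization, and then to add the spectral projection error. The natural object to compare against is the $L^2$-projection of the exact solution, $\hat{u}_n = \Pc^K(u(\cdot,t_n))$ and $\hat{\dot{u}}_n = \Pc^K(\partial_t u(\cdot,t_n))$, both of which lie in $\mathcal{V}^K$. First I would write the global error as
\[
(u_n^K,\dot{u}_n^K) - (u(\cdot,t_n),\partial_t u(\cdot,t_n)) = \bigl[(u_n^K,\dot{u}_n^K)-(\hat{u}_n,\hat{\dot{u}}_n)\bigr] + \bigl[(\hat{u}_n,\hat{\dot{u}}_n)-(u(\cdot,t_n),\partial_t u(\cdot,t_n))\bigr].
\]
The second bracket is the pure projection error, bounded in $H^2\times H^1$ by $CK^{-(3+s)}\le CK^{-2-s}$ via $u\in H^{5+s}$, $\partial_t u\in H^{4+s}$ and the standard estimate $\norm{v-\Pc^K v}_r\le CK^{-(\sigma-r)}\norm{v}_\sigma$. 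It then remains to bound the first bracket by $C\tau^2+CK^{-2-s}$.

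For the first bracket I would reuse the \emph{stability} part of the proof of Theorem~\ref{thm-large} almost verbatim, exploiting that $\Pc^K$ is $L^2$-orthogonal and commutes with every Fourier multiplier $\cos(\tau\Om)$, $\sin(\tau\Om)$, $\sinc(\tau\Om)$, $\Phi$, $\Psi_1$ appearing in \eqref{eq-method-full}. Consequently the modified discrete energy of Section~\ref{sec-error} and its positivity, obtained from the semiclassical G\aa rding inequality, restrict to $\mathcal{V}^K$ with $K$-independent constants, the symbol bounds being inherited from $a$, $\phi$, $\psi_1$. The one genuinely new point is that the discrete ellipticity now rests on $1+\ka\,a^K(v)\ge\tfrac{\de}{2}$ for the numerical argument $v\in\mathcal{V}^K$; since $H^1\hookrightarrow C^0$ in one dimension and $\mathcal{I}^K$ is stable on the relevant Sobolev scale, this follows from \eqref{eq-delta} once $v$ is known to be close to $u(\cdot,t_n)$ in $H^1$. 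That closeness, together with the boundedness of $v$ required for the nonlinearity estimates \eqref{eq-algebra}, would be propagated by a bootstrap/induction that runs simultaneously with the error estimate. Because the integrator treats the linear part $-\Om^2u$ exactly, no CFL-type coupling of $\tau$ and $K$ enters and $\tau_0$ stays independent of $K$.

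For \emph{consistency} I would insert $(\hat{u}_n,\hat{\dot{u}}_n)$ into \eqref{eq-method-full} and estimate the defect. Using that $\Pc^K$ commutes with the propagator, together with the Duhamel formula for \eqref{eq-qlw-compact}, the defect splits into a temporal and a spatial part. The temporal part is analogous to that in Theorem~\ref{thm-large}: the quadrature and filter errors are $O(\tau^3)$ locally, are controlled through the equation by the higher Sobolev norms of $u$ (whence the $H^{5+s}\times H^{4+s}$ regularity), and accumulate over $O(T/\tau)$ steps to the $C\tau^2$ contribution. The spatial part is essentially $\Pc^K\bigl[f(u(\cdot,t_n))-f^K(\Pc^K u(\cdot,t_n))\bigr]$, which I would split as $\bigl[f(u)-f(\Pc^K u)\bigr]+\bigl[f(\Pc^K u)-f^K(\Pc^K u)\bigr]$. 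The first difference is dominated by the quasilinear term $a(u)\,\partial_x^2(u-\Pc^K u)$; since $f$ loses two derivatives, so that $f(u)\in H^{3+s}$, this is $O(K^{-(2+s)})$ when measured in $H^1$ (the norm in which $\widehat{f}^K$ enters the $\dot{u}$-equation), by \eqref{eq-algebra} and the projection bound. The second difference is the interpolation (aliasing) error of $a$ and $g$, which by \eqref{eq-algebra-plus} and the stability of $\mathcal{I}^K$ is of the same order or smaller. With the factor $\tau$ in front of $\widehat{f}^K$ and accumulation over $O(T/\tau)$ steps, the spatial defect contributes $CK^{-(2+s)}$.

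Feeding these local defects into the discrete Gronwall inequality supplied by the stability estimate then yields $\normv{(u_n^K,\dot{u}_n^K)-(\hat{u}_n,\hat{\dot{u}}_n)}_1\le C\tau^2+CK^{-(2+s)}$, and the triangle inequality with the projection error completes the proof. I expect the main obstacle to be the stability step rather than the consistency step: one must check that the modified-energy positivity from the semiclassical G\aa rding inequality holds \emph{uniformly in} $K$ on $\mathcal{V}^K$, and that the discrete ellipticity $1+\ka\,a^K(v)\ge\tfrac{\de}{2}$ survives the interpolation $\mathcal{I}^K$, all while the $H^1$-closeness and boundedness of the numerical solution are available only a posteriori through the very error bound being established. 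Closing this loop by a careful induction, uniform in both $\tau$ and $K$, is the delicate part.
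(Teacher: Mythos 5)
Your proposal is correct and follows essentially the same route as the paper: comparison against the $L^2$-projection $\Pc^K$ of the exact solution, a fully discrete modified energy whose G\aa rding-based positivity holds on $\mathcal{V}^K$ uniformly in $K$ (with $a^K=\mathcal{I}^K\circ a$ handled via interpolation stability), a local defect of size $C\tau^3+C\tau K^{-2-s}$ dominated spatially by the derivative-losing quasilinear term applied to the projection error, and an induction closing the argument before the final triangle inequality. The points you flag as delicate (uniformity in $K$ of the energy positivity, ellipticity of $1+\ka\,a^K$, and the a posteriori propagation of bounds) are exactly the ones the paper addresses in its Sections \ref{subsec-stability-full}--\ref{subsec-proof-full}.
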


The convergence rate $\tau^2$ in Theorem \ref{thm-full} for the discretization in time is optimal as will be shown in the following numerical examples. It is not clear whether the convergence rate $K^{-2-s}$ for the discretization in space is also optimal under the given regularity assumption. In fact, numerical experiments suggest that the error behaves like $K^{-3-s}$ almost uniformly in the time step-size.

In the following numerical examples, we consider the trigonometric integrator \eqref{eq-method} (or \eqref{eq-method-full}) with 
\begin{itemize}
\item no additional filter functions, i.e., $\phi=\psi_1=1$, which is known as impulse method or method of Deuflhard,  
\item filter functions \eqref{eq-trigoHL}, which is the method of Hairer \& Lubich and coincides with the new method \eqref{filt} for $c=0$,
\item filter functions \eqref{eq-trigoGH}, which is the method of Grimm \& Hochbruck and coincides with the new method \eqref{filt} for $c=1$,
\item filter functions \eqref{filt} with   $c=2$ and $c=3$,   which is the new method proposed in this paper for non-small nonlinearities.
\end{itemize}
The specific quasilinear wave equation that we consider is \eqref{eq-qlw} with $a(u)=u$ and $g(u,\partial_xu)=(\partial_x u)^2 + \kappa u^3$:
\begin{equation}\label{eq-qlw-ex}
\partial_{t}^2 u = \partial_{x}^2 u - u + \ka u \partial_{x}^2 u + \ka (\partial_x u)^2 + \ka^2 u^3.
\end{equation}
This is the model problem of \cite{Chong2013}. As initial values we consider rather artificially
\[
u(x,0) = \sum_{j\in\mathbb{Z}} \frac{1}{\sqrt{1+\abs{j}^{11+1/50}}} \e^{\iu jx}, \qquad \partial_t u(x,0) = \sum_{j\in\mathbb{Z}} \frac{1}{\sqrt{1+\abs{j}^{9+1/50}}} \e^{\iu jx}.
\] 
For this choice, the initial values are in $H^5\times H^4$, but not in $H^{5+\sigma}\times H^{4+\sigma}$ for $\sigma\ge 1/100$, so that the initial values just don't fail to satisfy the regularity assumption \eqref{eq-regularity} for $s=0$.

\begin{figure}
\centering \includegraphics{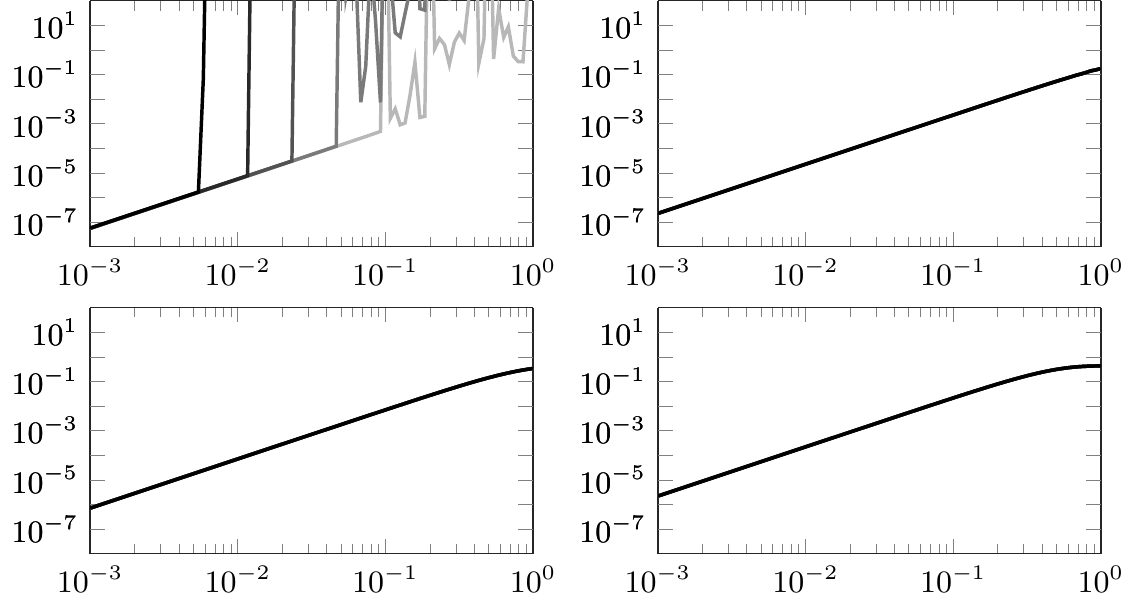}
\caption{Error in $H^2\times H^1$ at time $100$ vs.\ time step-size $\tau$ for a small nonlinearity ($\ka=1/100$). The methods are the impulse method (top left), the method \eqref{eq-trigoHL} of Hairer \& Lubich (top right), the method \eqref{eq-trigoGH} of Grimm \& Hochbruck (bottom left) and our new method \eqref{filt} with   $c=2$   (bottom right). Different lines correspond to different values of the discretization parameter $K=2^5,2^6,2^7,2^8,2^9$, with darker lines for larger values of $K$.}\label{fig-small}
\end{figure}

\begin{example}[Small nonlinearity]
We consider equation \eqref{eq-qlw-ex} with a small nonlinearity as in \cite{Chong2013}. We choose $\ka = 1/100$, and we consider correspondingly a long time interval of length $\ka^{-1}=100$. The error in $H^2\times H^1$ of various trigonometric integrators at time $t=100$ has been plotted in Figure \ref{fig-small}. In the plots, only the temporal error has been taken into account by comparing the numerical solution 
to a reference solution with the same spatial discretization parameter. 

For the method \eqref{eq-trigoHL} of Hairer \& Lubich, the method \eqref{eq-trigoGH} of Grimm \& Hochbruck and the new method \eqref{filt} with   $c=2$,   we observe second-order convergence in time uniformly in the spatial discretization parameter (the different lines corresponding to different spatial discretization parameters are all on top of each other).   Note that the filter functions of these methods satisfy Assumptions \ref{assum_slw_psibds} and \ref{assum_qlw_psi} of Theorems \ref{thm-large} and~\ref{thm-full} and that Assumption \ref{assum_lkap} is an empty condition for small $\kappa$ (see Remark \ref{rem3pt1}). The observed convergence of these methods can thus be explained with Theorems \ref{thm-large} and~\ref{thm-full}.   

For the impulse method, whose filter functions don't satisfy Assumption \ref{assum_qlw_psi}, we observe second-order convergence only for time step-sizes that are sufficiently small compared to the inverse of the spatial discretization parameter $K$.   This observation is explained, for small $\abs{\kappa}$, in a previous version of this paper \cite[Section 5.2]{Prev}.   It is a quasilinear phenomenon which is not present in the semilinear case \cite{Gauckler2015}.
\end{example}

\begin{figure}[t]
\centering \includegraphics{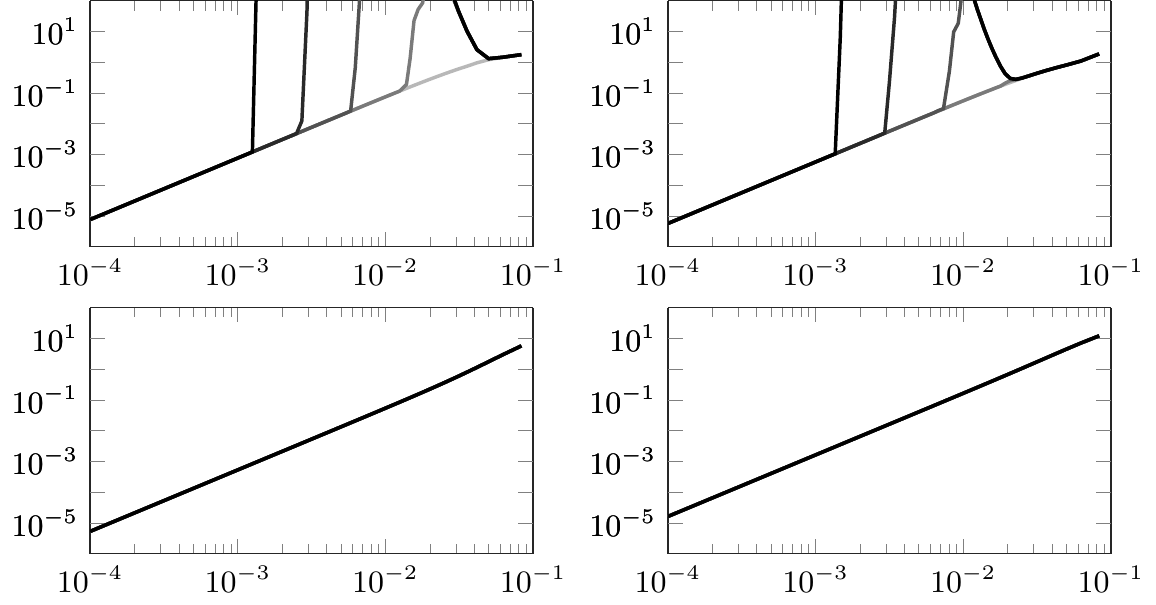}
\caption{Error in $H^2\times H^1$ at time $\frac14$ vs.\ time step-size $\tau$ for a non-small nonlinearity ($\ka=1$). The methods are the method \eqref{eq-trigoHL} of Hairer \& Lubich (top left), the method \eqref{eq-trigoGH} of Grimm \& Hochbruck (top right) and our new method \eqref{filt} with   $c=2$   (bottom left) and   $c=3$   (bottom right). Different lines correspond to different values of the discretization parameter $K=2^5,2^6,2^7,2^8,2^9$, with darker lines for larger values of $K$.}\label{fig-nonsmall}
\end{figure}

\begin{example}[Non-small nonlinearity]
We consider again equation \eqref{eq-qlw-ex}, but now with a non-small nonlinearity with $\ka = 1$ on a time interval of length $\frac14$. Numerical experiments suggest that the exact solution develops a singularity slightly beyond this time interval, and that $u=\ka\,a(u)$ (which appears in assumption~\eqref{eq-A0}) is bounded on this time interval by $A_0=13$. The error in $H^2\times H^1$ at time $t=\frac14$ of the methods has been plotted in Figure \ref{fig-nonsmall}. 

For the new method \eqref{filt} with   $c=2$ and $c=3$,   we observe second-order convergence in time. These methods satisfy Assumption \ref{assum_slw_psibds} and \ref{assum_qlw_psi}, but they also satisfy the additional Assumption \ref{assum_lkap} for non-small nonlinearities   with the relevant value $A_0=13$ (this follows from Remark \ref{rem3pt2} since $2>\tfrac12\sqrt{A_0}$). The observed convergence of the new methods can thus be explained with Theorems \ref{thm-large} and \ref{thm-full}.     In practice, one will choose filter functions $\phi$ as in \eqref{filt} with a value of $c$ as small as possible (because the error constant deteriorates as $c$ grows), and one will possibly adapt the value of $c$ in the course of the computation (depending on the size of the numerical solution).   

The methods \eqref{eq-trigoHL} of Hairer \& Lubich and \eqref{eq-trigoGH} of Grimm \& Hochbruck don't satisfy Assumption \ref{assum_lkap} with the necessary value $A_0=13$, although their filters are of the form \eqref{filt} of Remark \ref{rem3pt2}, but with a too small value $c=0$ and $c=1$, respectively.
For these methods, the observed convergence is not uniform in the spatial discretization parameter $K$. 
\end{example}

For additional numerical examples in connection with the questions studied in \cite{Groves2005,Chong2013,Chirilus-Bruckner2015,Duell2016}, we refer to a previous version of this paper \cite[Section 3.5]{Prev}.

\section{Proof of the error bound for the semi-discretization in time}
\label{sec-error}

In this section, we give the proof of Theorem \ref{thm-large} on the global error of the trigonometric integrator \eqref{eq-method} applied to the quasilinear wave equation \eqref{eq-qlw} without discretization in space. 
In the proof, we restrict to the case $g\equiv 0$ in \eqref{eq-qlw}, i.e., 
\begin{equation}\label{defF}
f(u) = a(u) \partial_{x}^2 u, \qquad \widehat{f}(u) = \Psi_1 \bigl( a(\Phi u) (\Phi \partial_{x}^2  u) \bigr) 
\end{equation}
in the notation \eqref{eq-f} and \eqref{eq-fhat}. Since the quasilinear term $a(u)\partial_{x}^2 u$ is the most critical part of the nonlinearity, the extension to nonzero $g$ is rather straightforward and we will comment throughout on the necessary modifications to take $g \neq 0$.  

Throughout the proof, we denote by $C$ a generic constant that may depend on  $a$,  an  upper bound $ \max(1,\abs{\kappa})$ on the absolute value of the coefficient $\ka$ in \eqref{eq-qlw} (but not on a lower bound), the order of the Sobolev space under consideration and on the constants $c_0$, $\de$ and $A_0$ of \eqref{eq-assumfilter-bounds} and \eqref{eq-assumfilter-largedata}. Additional dependencies of $C$ are denoted by lower indices, e.g., $C_M$ with $M$ from \eqref{eq-regularity}.

\subsection{Basic estimates}

The estimates \eqref{eq-algebra} and the smoothness of $a$  imply the following fundamental properties of the nonlinearity $f$ of \eqref{defF}: We have, for $s\ge 0$ and $u,v\in H^{s+2}$,
\begin{equation}\label{eq-algebra-f}
\norm{f(u)}_s \le \Lambda_{s}(\|u\|_{\sigma}) \|u\|_{\sigma} \norm{u}_{s+2}  \mywith \si = \max(s,1)
\end{equation}
and the Lipschitz property
\begin{equation}\label{eq-lipschitz}
\norm{f(u)-f(v)}_s \le \Lambda_{s}  \bigl( \norm{u}_{s+2} + \norm{v}_{s+2} \bigr)  \bigl( \norm{u}_{s+2} + \norm{v}_{s+2} \bigr) \norm{u-v}_{s+2},
\end{equation}
  where $\Lambda_{s}(\cdot)$ is a continuous non-decreasing function. 

Throughout the proof of Theorem \ref{thm-large}, we make use of the fact that the numerical flow $\ph_\tau$ given by \eqref{eq-method} maps $H^2\times H^1$ to itself and more generally $H^{s+1}\times H^s$ to itself for $s\ge 1$, as stated in the following lemma. This property of an explicit numerical method is in the quasilinear case by no means natural. It can be shown here using the smoothing properties of filter functions that satisfy \eqref{eq-assumfilter-psi1phi}.

\begin{lemma}[Bounds for a single time step]\label{lemma-singlestep}
Let $s\ge 1$, and let the filter functions satisfy Assumptions \ref{assum_slw_psibds} and \ref{assum_qlw_psi}. For a numerical solution $(u_{n},\dot{u}_{n}) \in H^{s+1}\times H^s$ with
\[
\normv{(u_n,\dot{u}_n)}_{s}\le M
\]
we have $(u_{n+1},\dot{u}_{n+1}) \in H^{s+1}\times H^s$ with
\[
\normv{(u_{n+1},\dot{u}_{n+1})}_{s} \le C_M.
\]
\end{lemma}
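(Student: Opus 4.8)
The plan is to exploit the fact that the one-step formulas \eqref{eq-method-onestep} make the method fully explicit: $u_{n+1}$ is given directly in terms of $(u_n,\dot u_n)$, and $\dot u_{n+1}$ in terms of $(u_n,\dot u_n)$ together with the already-computed $u_{n+1}$, so there is no implicit solve and it suffices to estimate each term on the right-hand sides in the target norm. All operators appearing linearly are Fourier multipliers with controlled symbols: $\cos(\tau\Om)$, $\Phi$ and $\Psi_1$ act boundedly on every $H^\sigma$ (using $\abs{\phi},\abs{\psi_1}\le 1$ from \eqref{eq-assumfilter-bounds}), $\Om\sin(\tau\Om)$ loses exactly one derivative, and $\tau\sinc(\tau\Om)$ \emph{gains} one derivative uniformly in $\tau$ since $\abs{\tau\sinc(\tau\skla{j})}\le\skla{j}^{-1}$. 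Hence the purely linear contributions $\cos(\tau\Om)u_n$, $\tau\sinc(\tau\Om)\dot u_n$, $-\Om\sin(\tau\Om)u_n$ and $\cos(\tau\Om)\dot u_n$ are immediately controlled by $\normv{(u_n,\dot u_n)}_s\le M$, and the only genuine issue is the quasilinear contribution, which through $f(u)=a(u)\partial_x^2 u$ loses two derivatives.

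The crucial observation is that Assumption \ref{assum_qlw_psi}, i.e.\ \eqref{eq-assumfilter-psi1phi}, lets us factor $\Psi_1=\sinc(\tau\Om)\Phi$. Writing $w_n=a(\Phi u_n)\,(\Phi\partial_x^2 u_n)$ so that $\widehat f(u_n)=\Psi_1 w_n=\sinc(\tau\Om)\Phi w_n$, the nonlinear term in the $u_{n+1}$-equation becomes
\[
\tfrac12\tau^2\sinc(\tau\Om)\ka\widehat f(u_n)=\tfrac12\ka\,\tau^2\sinc(\tau\Om)^2\,\Phi w_n ,
\]
and the multiplier $\tau^2\sinc(\tau\Om)^2$ has symbol bounded by $\skla{j}^{-2}$, hence \emph{gains two derivatives uniformly in $\tau$} — precisely the two derivatives lost in $w_n$. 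I would first bound $w_n$ in $H^{s-1}$: since $\normv{(u_n,\dot u_n)}_s\le M$ gives $\norm{u_n}_{s+1}\le M$, the estimate \eqref{eq-algebra-plus} yields $\norm{a(\Phi u_n)}_{s+1}\le C_M$, while $\norm{\Phi\partial_x^2 u_n}_{s-1}\le\norm{u_n}_{s+1}\le M$; the product estimates \eqref{eq-algebra} (applied through \eqref{eq-algebra1} directly when $s=1$, and through boundedness of multiplication by an $H^{s+1}$-function on $H^{s-1}$ when $s>1$) then give $\norm{w_n}_{s-1}\le C_M$. As $\Phi$ is bounded on $H^{s-1}$, applying the two-derivative smoothing of $\tau^2\sinc(\tau\Om)^2$ places the nonlinear term into $H^{s+1}$ with norm $\le C_M$, uniformly in $\tau$, and therefore $\norm{u_{n+1}}_{s+1}\le C_M$.

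For $\dot u_{n+1}$ the same factorization gives $\tfrac12\tau\ka\widehat f(u_n)=\tfrac12\ka\,\tau\sinc(\tau\Om)\,\Phi w_n$, where now the single derivative gained by $\tau\sinc(\tau\Om)$ already maps $\Phi w_n\in H^{s-1}$ into $H^s$ with norm $\le C_M$, the prefactor $\cos(\tau\Om)$ being harmless; the last term $\tfrac12\tau\ka\widehat f(u_{n+1})$ is handled identically, using the bound $\norm{u_{n+1}}_{s+1}\le C_M$ just established to control $w_{n+1}=a(\Phi u_{n+1})(\Phi\partial_x^2 u_{n+1})$ in $H^{s-1}$. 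Collecting the estimates for $u_{n+1}$ and $\dot u_{n+1}$ yields $\normv{(u_{n+1},\dot u_{n+1})}_s\le C_M$. The main obstacle here is conceptual rather than computational: recognizing that the filter identity \eqref{eq-assumfilter-psi1phi} is exactly what turns the explicit scheme's $\sinc(\tau\Om)$ prefactors into the smoothing multiplier $\tau^2\sinc(\tau\Om)^2$ (respectively $\tau\sinc(\tau\Om)$) that compensates the two-derivative loss of the quasilinear term; without \eqref{eq-assumfilter-psi1phi} the scheme gains strictly fewer derivatives than it loses and the bound fails. The presence of a nonzero $g$ is strictly easier, since $g(\Phi u,\partial_x\Phi u)$ loses only one derivative and thus contributes a term already in $H^s$.
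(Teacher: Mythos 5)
Your proof is correct and follows essentially the same route as the paper: the paper's proof also rests on the $\tau$-uniform smoothing bound $\tau\norm{\sinc(\tau\Om)v}_{s+1}\le\norm{v}_s$ together with the factorization $\Psi_1=\sinc(\tau\Om)\Phi$ from Assumption \ref{assum_qlw_psi}, which yields exactly your estimate $\tau\norm{\widehat f(u)}_s\le\norm{\Phi f(\Phi u)}_{s-1}\le\Lambda_{s-1}(\norm{u}_{s+1})\norm{u}_{s+1}^2$ (the paper's \eqref{eq-est-aux}), and then bounds $u_{n+1}$ first and $\dot u_{n+1}$ afterwards, just as you do.
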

\begin{proof}
We consider the method in the form \eqref{eq-method-onestep}. In this formulation, we use that
\begin{equation}\label{eq-proofsinglestep-aux}
\tau\norm{\sinc(\tau\Om) u}_{s+1} = \norm{\Om^{-1}\sin(\tau\Om) u}_{s+1} = \norm{\sin(\tau\Om)u}_s \le \norm{u}_s.
\end{equation}
and that
\begin{equation}\label{eq-est-aux}
\tau\norm{\widehat{f}(u)}_{s} = \tau \norm{\Psi f(\Phi u)}_{s} \le \norm{\Phi f(\Phi u)}_{s-1} \le \Lambda_{s-1}( \|u\|_{s+1}) \|u\|_{s+1}^2.
\end{equation}
The first estimate of \eqref{eq-est-aux} follows from \eqref{eq-assumfilter-psi1phi} and \eqref{eq-proofsinglestep-aux} with $s$ instead of $s+1$ and the second one from \eqref{eq-assumfilter-bounds} and \eqref{eq-algebra-f}. 
These properties yield the claimed bound on the numerical solution, first for $u_{n+1}$ and with this also for $\dot{u}_{n+1}$.
\end{proof}

In the same way, but using the Lipschitz property \eqref{eq-lipschitz} instead of \eqref{eq-algebra-f}, we can derive the following estimate for the difference of two numerical solutions \eqref{eq-method}.

\begin{lemma}[Stability of a single time step]\label{lemma-stability-naive}
Let $s\ge 1$, and let the filter functions satisfy Assumptions \ref{assum_slw_psibds} and \ref{assum_qlw_psi}. For numerical solutions $(u_{n},\dot{u}_{n}) \in H^{s+1}\times H^s$ and $(v_{n},\dot{v}_{n}) \in H^{s+1}\times H^s$ with
\[
\normv{(u_n,\dot{u}_n)}_{s}\le M \myand \normv{(v_n,\dot{v}_n)}_{s}\le M
\]
we have 
\[
\normvbig{(u_{n+1},\dot{u}_{n+1}) - (v_{n+1},\dot{v}_{n+1})}_s \le C_M \normvbig{(u_{n},\dot{u}_{n}) - (v_{n},\dot{v}_{n})}_s . \tag*{\qed}
\]
\end{lemma}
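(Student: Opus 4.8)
The plan is to mimic the proof of Lemma~\ref{lemma-singlestep} verbatim, working with the explicit one-step formulation~\eqref{eq-method-onestep} for both numerical solutions and subtracting. First I would estimate the difference $u_{n+1}-v_{n+1}$ in $H^{s+1}$. The operators $\cos(\tau\Om)$ and $\sin(\tau\Om)$ are Fourier multipliers bounded by $1$ on every $H^r$, so $\cos(\tau\Om)(u_n-v_n)$ contributes at most $\norm{u_n-v_n}_{s+1}$, and the smoothing identity~\eqref{eq-proofsinglestep-aux} bounds $\tau\sinc(\tau\Om)(\dot u_n-\dot v_n)$ by $\norm{\dot u_n-\dot v_n}_s$.

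For the nonlinear term $\tfrac12\tau^2\sinc(\tau\Om)\ka\,(\widehat f(u_n)-\widehat f(v_n))$ I would combine~\eqref{eq-proofsinglestep-aux} with the Lipschitz analogue of~\eqref{eq-est-aux}. Using the filter identity $\psi_1=\sinc\,\phi$ of Assumption~\ref{assum_qlw_psi}, one has $\tau\Psi_1=\Om^{-1}\sin(\tau\Om)\Phi$, which gains a factor $\Om^{-1}$ and gives $\tau\norm{\widehat f(u_n)-\widehat f(v_n)}_s\le\norm{f(\Phi u_n)-f(\Phi v_n)}_{s-1}$. Then the Lipschitz property~\eqref{eq-lipschitz} applied at index $s-1$ (this is exactly where $s\ge1$ is used), together with $\norm{\Phi\,w}_{s+1}\le\norm{w}_{s+1}$ and the a priori bounds $\norm{u_n}_{s+1},\norm{v_n}_{s+1}\le M$, yields $\tau\norm{\widehat f(u_n)-\widehat f(v_n)}_s\le C_M\norm{u_n-v_n}_{s+1}$. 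Summing the three contributions gives $\norm{u_{n+1}-v_{n+1}}_{s+1}\le C_M\normvbig{(u_n,\dot u_n)-(v_n,\dot v_n)}_s$.

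Next I would estimate $\dot u_{n+1}-\dot v_{n+1}$ in $H^s$. The term $-\Om\sin(\tau\Om)(u_n-v_n)$ is bounded by $\norm{u_n-v_n}_{s+1}$, since $\Om$ costs exactly one derivative and $\sin(\tau\Om)$ is bounded by $1$, and the trigonometric and nonlinear contributions built from $u_n,v_n$ are controlled exactly as above. The one genuinely new term is $\tfrac12\tau\ka\,(\widehat f(u_{n+1})-\widehat f(v_{n+1}))$, evaluated at the already-computed step $n+1$. Here I would invoke Lemma~\ref{lemma-singlestep} to ensure $\norm{u_{n+1}}_{s+1},\norm{v_{n+1}}_{s+1}\le C_M$, so the Lipschitz constant stays controlled, and then reuse the $H^{s+1}$ bound on $u_{n+1}-v_{n+1}$ from the first part to get $\tau\norm{\widehat f(u_{n+1})-\widehat f(v_{n+1})}_s\le C_M\norm{u_{n+1}-v_{n+1}}_{s+1}\le C_M\normvbig{(u_n,\dot u_n)-(v_n,\dot v_n)}_s$. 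Adding the $H^{s+1}$ and $H^s$ estimates gives the claim.

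The only point requiring care is the compensation of the two spatial derivatives hidden in $f$ (through the quasilinear term $a(u)\partial_x^2 u$) by the smoothing gained from the factor $\tau$ together with the filter identity $\psi_1=\sinc\,\phi$; this is precisely the mechanism already exploited in Lemma~\ref{lemma-singlestep} that prevents a loss of derivatives and lets every nonlinear term be measured one Sobolev order lower, so that~\eqref{eq-lipschitz} at index $s-1$ suffices. The presence of $\widehat f(u_{n+1})$ is the sole structural difference from the single-solution bound, and it is harmless because the scheme is explicit, so that the new-step difference $u_{n+1}-v_{n+1}$ has already been estimated before it is needed.
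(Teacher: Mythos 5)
Your proposal is correct and follows exactly the route the paper intends: the paper's proof of this lemma is precisely ``repeat the proof of Lemma~\ref{lemma-singlestep}, using the Lipschitz property~\eqref{eq-lipschitz} in place of~\eqref{eq-algebra-f}'', which is what you carry out, including the necessary invocation of Lemma~\ref{lemma-singlestep} to control $\norm{u_{n+1}}_{s+1}$ and $\norm{v_{n+1}}_{s+1}$ before estimating the explicit term $\widehat f(u_{n+1})-\widehat f(v_{n+1})$.
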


\begin{remark}
  The basic estimates \eqref{eq-algebra-f} and \eqref{eq-lipschitz} extend directly to a nonzero $g$ in~\eqref{eq-qlw}, and hence also the statements of Lemmas \ref{lemma-singlestep} and \ref{lemma-stability-naive}.
\end{remark}

\subsection{Outline of the proof of Theorem \ref{thm-large}}\label{subsec-outline}

Lemmas \ref{lemma-singlestep} and in particular Lemma \ref{lemma-stability-naive} illustrate the difficulties that are encountered when trying to prove error bounds, say in $H^2\times H^1$. Denoting by 
\begin{equation}\label{eq-globalerror}
(e_{n+1},\dot{e}_{n+1}) = (u_{n+1} , \dot{u}_{n+1}) - (u(\cdot,t_{n+1}),\partial_t u(\cdot,t_{n+1}))
\end{equation}
the global error of \eqref{eq-method} after $n$ time steps, and denoting, with the numerical flow $\ph_\tau$ given by \eqref{eq-numflow}, by 
\begin{equation}\label{eq-localerror}
(d_{n+1},\dot{d}_{n+1}) = \ph_\tau\klabig{u(\cdot,t_{n}),\partial_t u(\cdot,t_{n})} - \klabig{u(\cdot,t_{n+1}),\partial_t u(\cdot,t_{n+1})}
\end{equation}
the local error when starting at $(u(\cdot,t_{n}),\partial_t u(\cdot,t_{n}))$, one routinely decomposes the global error in $H^2\times H^1$ as
\[
\normv{(e_{n+1},\dot{e}_{n+1})}_1 \le \normvbig{\ph_\tau\klabig{u_{n},\dot{u}_{n}} - \ph_\tau\klabig{u(\cdot,t_{n}),\partial_t u(\cdot,t_{n})}}_1 + \normv{(d_{n+1},\dot{d}_{n+1})}_1.
\]
By analyzing the error propagation of the method (stability of the method), one then aims for estimating the first term on the right-hand side by $\e^{C\tau} \normv{(e_{n},\dot{e}_{n})}_1$. The stability estimate of Lemma \ref{lemma-stability-naive}, however, only yields a factor $C_M$ instead of $\e^{C\tau}$, which makes this approach failing.

Our approach here is to replace the $H^2\times H^1$-norm by a different but related measure for the error that allows us to prove a suitable stability estimate. This measure isn't a norm, and it depends on time. Its definition is inspired by energy techniques as used to analyze the exact solution: We introduce in Section \ref{subsec-stability-small} an energy-type functional $\Ec\colon H^{2}\times H^1 \times H^2 \rightarrow \R$, and we will then use 
\[
\Ec_n\kla{e_n,\dot{e}_n } = \Ec \kla{e_n,\dot{e}_n,u_n}
\]
instead of $\normv{(e_{n},\dot{e}_{n})}_1$ as a measure for the global error $(e_n,\dot{e}_n)$.

The error accumulation in this quantity reads
\begin{equation}\label{eq-erroraccum}\begin{split}
&\Ec_{n+1}\kla{e_{n+1},\dot{e}_{n+1} } =
\Ec_{n+1}\klaBig{\ph_\tau\klabig{u_{n},\dot{u}_{n}} - \ph_\tau\klabig{u(\cdot,t_{n}),\partial_t u(\cdot,t_{n})}} \\
 &\qquad\qquad\qquad + \klaBig{ \Ec_{n+1}\kla{e_{n+1},\dot{e}_{n+1}} - \Ec_{n+1}\klabig{e_{n+1}-d_{n+1},\dot{e}_{n+1}-\dot{d}_{n+1}}} .
\end{split}\end{equation}
The difference in the second line of \eqref{eq-erroraccum} accounts for the local error of the method. It is estimated in Section \ref{subsec-locerror} below by adapting the proof for the semilinear case to the quasilinear case. The term in the first line is $\Ec_{n+1}$ evaluated at a difference of two numerical solutions, and hence describes the error propagation of the method. It turns out, in Section \ref{subsec-stability-small} below, that we can prove a suitable estimate for the error propagation in the quantity $\Ec$. The relation of $\Ec$ to the $H^2\times H^1$-norm is then described in Section \ref{subsec-sobolevnorm-small} below. Finally, the error accumulation is studied in Section \ref{subsec-accumulation-small} below.

\subsection{Stability of the numerical method}\label{subsec-stability-small}

A key step in the proof of Theorem \ref{thm-large} is to establish stability of the numerical method \eqref{eq-method} in a suitable sense. 

We introduce the energy-type quantity
\begin{equation}\label{eq-energy}
\Ec\kla{e,\dot{e},u} = \normv{(e,\dot{e})}_1^2 + \ka\, \Uc\klabig{\Phi e,\Phi u},
\end{equation}
where
\begin{equation}\label{Uc}
  \Uc \kla{ e, u}
= \sklabig{ \cos(\tau\Om) \partial_{x}^2 e,  a(u) \partial_{x}^2 e}_0 - \tfrac{1}{4}\tau^2 \kappa \normbig{ \Psi_1 \klabig{a(u) \partial_{x}^2 e} }_1^2.
\end{equation}
Up to the non-quadratic term $\Uc$, this energy $\Ec$ is essentially the $H^{2}\times H^1$-norm of $(e,\dot{e})$. Under the Assumptions \ref{assum_slw_psibds} and \ref{assum_qlw_psi}, the energy $\Ec$ is well-defined for $(e,\dot{e})\in H^{2}\times H^1$ and $u\in H^2$. This follows from   the Cauchy--Schwarz inequality and \eqref{eq-algebra} applied to the first term of $\Uc$ and from   assumptions \eqref{eq-assumfilter-bounds} and \eqref{eq-assumfilter-psi1phi} applied as in \eqref{eq-est-aux} to the second term of $\Uc$.

The motivation to define the energy as in \eqref{eq-energy} is the calculation in the proof of the following lemma, where we compute the change in the energy along differences of numerical solutions. 

\begin{lemma}[Change in the energy]\label{lemma-energy}
Let the filter functions satisfy Assumptions \ref{assum_slw_psibds} and \ref{assum_qlw_psi}. For numerical solutions $(u_n,\dot{u}_n)\in H^{2}\times H^1$ and $(v_n,\dot{v}_n)\in H^{2}\times H^1$ we then have
\begin{multline*}
\Ec\klabig{u_{n+1}-v_{n+1},\dot{u}_{n+1}-\dot{v}_{n+1},u_{n+1}}\\ = \Ec\klabig{u_n-v_n,\dot{u}_n-\dot{v}_n,u_n} + \ka\,   \Rc\klabig{\Phi u_{n+1},\Phi u_n,\Phi v_{n+1},\Phi v_n}
\end{multline*}
with the remainder   
\begin{equation}\label{eq-rem-all}
\Rc\klabig{u,u',v,v'} = \widetilde \Rc\klabig{u,u',v,v'} + \Rc^*\klabig{u,v} - \Rc^*\klabig{u',v'},
\end{equation}
where
\begin{equation}\label{rem}
\widetilde \Rc\klabig{u,u',v,v'} = \sklabig{u-v , f(u') - f(v') }_{1} - \sklabig{u'-v', f(u) - f(v) }_{1}
\end{equation}
and 
\begin{align*}
\Rc^*(u,v) &= 
 \sklabig{ \cos(\tau\Om) (u-v), a(u) \partial_x^2(u-v) }_0 + \sklabig{ \cos(\tau\Om) (u-v), \klabig{a(u)-a(v)} \partial_x^2v }_1 \\ 
&\qquad+ \tfrac{1}{2}\tau^2 \kappa \,\sklabig{ \Psi_1 \klabig{ a(u) \partial_{x}^2 (u-v)},  \Psi_1 \klabig{ (a(u) - a(v)) \partial_{x}^2 v} }_1\\
&\qquad +\tfrac{1}{4} \tau^2 \kappa \,\normbig{ \Psi_1 \klabig{(a(u) -a(v))\partial_{x}^2 v} }_1^2.
\end{align*}
\end{lemma}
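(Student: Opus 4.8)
The identity is exact, so the whole argument is algebraic bookkeeping rather than estimation; the art is to arrange the computation so that the structure of $\Ec$ reveals itself. The plan is to write $e_n=u_n-v_n$, $\dot e_n=\dot u_n-\dot v_n$ and to note that, by linearity of the linear flow in \eqref{eq-method}, these differences obey the same three-stage recursion as a single numerical solution, with $\widehat{f}(u_n)$ replaced by $\Delta\widehat{f}_n:=\widehat{f}(u_n)-\widehat{f}(v_n)$. I would then propagate the quadratic part $\normv{(e,\dot e)}_1^2=\norm{e}_2^2+\norm{\dot e}_1^2$ through the three stages. The middle rotation stage is an orthogonal transformation of $(\Om e,\dot e)$, so it preserves $\norm{\Om e}_1^2+\norm{\dot e}_1^2=\norm{e}_2^2+\norm{\dot e}_1^2$ exactly; only the two half-kicks change the quadratic energy. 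Expanding the two kicks gives
\[
\normv{(e_{n+1},\dot e_{n+1})}_1^2-\normv{(e_n,\dot e_n)}_1^2=\tau\ka\,\bigl[\sklabig{\dot e_n,\Delta\widehat{f}_n}_1+\sklabig{\dot e_{n+1}^-,\Delta\widehat{f}_{n+1}}_1\bigr]+\tfrac14\tau^2\ka^2\bigl[\norm{\Delta\widehat{f}_n}_1^2+\norm{\Delta\widehat{f}_{n+1}}_1^2\bigr],
\]
where $\dot e_{n+1}^-$ is the intermediate velocity difference from the rotation stage.

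The next step is the crucial one and is where Assumption \ref{assum_qlw_psi} enters. Writing $h_n=f(\Phi u_n)-f(\Phi v_n)$, so that $\Delta\widehat{f}_n=\Psi_1 h_n$, I use $\Psi_1=\sinc(\tau\Om)\Phi$ and self-adjointness of the filters to rewrite $\tau\sklabig{\dot e_n,\Delta\widehat{f}_n}_1=\sklabig{\Phi\,(\tau\sinc(\tau\Om)\dot e_n),h_n}_1$. The rotation relations then express $\tau\sinc(\tau\Om)\dot e_n$ and $\tau\sinc(\tau\Om)\dot e_{n+1}^-$ through the position increments $e_{n+1}-\cos(\tau\Om)e_n$ and $\cos(\tau\Om)e_{n+1}-e_n$, respectively, modulo one half-kick correction (present only in the level-$n$ term) which recombines with the $\tfrac14\tau^2\ka^2\norm{\Delta\widehat{f}_n}_1^2$ term to leave $\tfrac14\tau^2\ka^2(\norm{\Psi_1 h_{n+1}}_1^2-\norm{\Psi_1 h_n}_1^2)$. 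Setting $u=\Phi u_{n+1}$, $u'=\Phi u_n$, $v=\Phi v_{n+1}$, $v'=\Phi v_n$, the velocity cross terms split into the antisymmetric ``mismatched'' combination $\sklabig{u-v,f(u')-f(v')}_1-\sklabig{u'-v',f(u)-f(v)}_1$, which is exactly $\widetilde\Rc$, plus the ``matched'' terms $\ka\sklabig{\cos(\tau\Om)(u-v),f(u)-f(v)}_1$ and $-\ka\sklabig{\cos(\tau\Om)(u'-v'),f(u')-f(v')}_1$.

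To finish, I treat the matched terms with the $H^1$ integration-by-parts identity $\sklabig{w,a\,\partial_x^2 z}_1=\sklabig{w,a\,\partial_x^2 z}_0-\sklabig{\partial_x^2 w,a\,\partial_x^2 z}_0$ together with the decomposition $f(u)-f(v)=a(u)\partial_x^2(u-v)+\klabig{a(u)-a(v)}\partial_x^2 v$. The top-order pieces, in which both derivatives fall on $u-v$, equal $-\ka\sklabig{\cos(\tau\Om)\partial_x^2(u-v),a(u)\partial_x^2(u-v)}_0$ and its level-$n$ counterpart with opposite sign; by construction these cancel exactly the change of the first term of $\Uc$ in \eqref{Uc}, which is the sole reason $\Uc$ carries the factor $\cos(\tau\Om)$ and the two derivatives in that precise position. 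What survives from the matched terms are the $H^0$ piece and the $\klabig{a(u)-a(v)}$ piece, i.e.\ the first two terms of $\Rc^*(u,v)-\Rc^*(u',v')$. Finally, expanding the leftover kick quantity $\tfrac14\tau^2\ka^2\klabig{\norm{\Psi_1 h_{n+1}}_1^2-\norm{\Psi_1 h_n}_1^2}$ against the change of the quadratic correction $-\tfrac14\tau^2\ka\normbig{\Psi_1\klabig{a(u)\partial_x^2(u-v)}}_1^2$ in $\Uc$, via the same decomposition $h=a(u)\partial_x^2(u-v)+\klabig{a(u)-a(v)}\partial_x^2v$, yields precisely the last two terms of $\Rc^*(u,v)-\Rc^*(u',v')$. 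Collecting everything gives the stated identity. I expect the main obstacle to be purely the bookkeeping---keeping the two time levels straight, separating matched from mismatched terms, and tracking the sign produced by the $H^1$ integration by parts---rather than any estimate, since the statement is an exact equality; the only genuinely non-obvious structural ingredient is the filter relation $\Psi_1=\sinc(\tau\Om)\Phi$, which is what turns the kick cross terms into position increments and makes the cancellation against the main term of $\Uc$ possible.
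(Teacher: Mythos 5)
Your proposal is correct and follows essentially the same route as the paper's proof: exploit the orthogonality of the rotation stage, expand the two half-kicks, use the filter identity $\Psi_1=\sinc(\tau\Om)\Phi$ to turn the velocity cross terms into position increments, split off the antisymmetric combination $\widetilde\Rc$, and recover $\Rc^*$ from the decomposition $f(u)-f(v)=a(u)\partial_x^2(u-v)+\klabig{a(u)-a(v)}\partial_x^2 v$ together with $\skla{\cdot,\cdot}_1=\skla{\cdot,\cdot}_0-\skla{\partial_x^2\cdot,\cdot}_0$. The only (cosmetic) difference is that you work with the intermediate velocity $\dot e_{n+1}^-$, so the level-$(n+1)$ position relation is kick-free, whereas the paper uses the final velocity $\dot u_{n+1}$ with the half-kick correction appearing symmetrically at both levels; the two bookkeepings are equivalent.
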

\begin{proof}
By the structure of the ``matrix'' in the second step of the method \eqref{eq-method}, we have
\[
\norm{\Om (u_{n+1}-v_{n+1})}_1^2 + \norm{\dot{u}_{n+1}^- - \dot{v}_{n+1}^-}_1^2 = \norm{\Om (u_n-v_n)}_1^2 + \norm{\dot{u}_{n}^+-\dot{v}_{n}^+}_1^2. 
\]
Hence, taking also the first and third step of the method into account, we get
\begin{multline*}
\norm{\Om (u_{n+1}-v_{n+1})}_1^2 + \normbig{\klabig{\dot{u}_{n+1}-\dot{v}_{n+1}} - \tfrac12 \tau \ka \klabig{\widehat{f}(u_{n+1})-\widehat{f}(v_{n+1})}}_1^2\\
 = \norm{\Om (u_n-v_n)}_1^2 + \normbig{\klabig{\dot{u}_{n}-\dot{v}_{n}} + \tfrac12 \tau \ka \klabig{\widehat{f}(u_n)-\widehat{f}(v_n)} }_1^2.
\end{multline*}
We then expand the second norm on the left and on the right. In the resulting mixed terms, we use the property
\begin{equation}\label{eq:sincIP}
\skla{v,\Psi_1w}_1=\skla{\Psi_1v,w}_1=\skla{\sinc(\tau\Om)\Phi v,w}_1,
\end{equation}
which follows from Parseval's theorem and assumption \eqref{eq-assumfilter-psi1phi}, and we replace the resulting differences $\tau\sinc(\tau\Om)\kla{\dot{u}_{n+1}-\dot{v}_{n+1}}$ and $\tau\sinc(\tau\Om)\kla{\dot{u}_{n}-\dot{v}_{n}}$ with the help of the relations
\begin{align*}
\tau\sinc(\tau\Om) \dot{u}_{n+1} &= \cos(\tau\Om) u_{n+1} + \tfrac12 \tau^2 \sinc(\tau\Om) \ka \widehat{f}(u_{n+1}) - u_{n},\\
\tau\sinc(\tau\Om) \dot{u}_n &= -\cos(\tau\Om) u_n - \tfrac12 \tau^2 \sinc(\tau\Om) \ka \widehat{f}(u_n) + u_{n+1}
\end{align*}
and the same relations for $v$. The second of these relations is taken from \eqref{eq-method-onestep}, and the first one can be derived from the first one by the symmetry of the method (or from the numerical method in the form \eqref{eq-method} by expressing $u_n$ in terms of $u_{n+1}$ and $\dot{u}_{n+1}$). Using again \eqref{eq:sincIP}, the definition of the remainder $\Rc$ and $\normv{(e,\dot{e})}_1^2=\norm{\Om e}_1^2 + \norm{\dot{e}}_1^2$,   this yields
\begin{multline*}
\normvbig{\klabig{u_{n+1}-v_{n+1},\dot{u}_{n+1}-\dot{v}_{n+1}}}_1^2 + \kappa \,\widetilde{\Uc}\klabig{\Phi (u_{n+1}-v_{n+1}),\Phi u_{n+1}}\\ 
 = \normvbig{\klabig{u_{n}-v_{n},\dot{u}_{n}-\dot{v}_{n}}}_1^2 + \kappa \,\widetilde{\Uc}\klabig{\Phi (u_{n}-v_{n}),\Phi u_{n}}\\ + \ka\, \widetilde{\Rc}\klabig{\Phi u_{n+1},\Phi u_n,\Phi v_{n+1},\Phi v_n}
\end{multline*}
with $\widetilde{\Rc}$ from \eqref{rem} and
\[
\widetilde{\Uc} \kla{e,u} = - \sklabig{\cos(\tau\Om) e , f(u) - f(u-e) }_{1} - \tfrac14 \tau^2 \ka\, \normbig{\Psi_1 \klabig{f(u) - f(u-e)} }_{1}^2 . 
\]
The statement of the lemma follows by setting
\[
\Rc^*(u,v) = \Uc\kla{u-v, u} - \widetilde{\Uc}\kla{u-v,u}.
\]
To get the final form of $\Rc^*$, we use 
\[
f(u)-f(v) = a(u)\partial_x^2 (u-v) + \kla{a(u)-a(v)} \partial_x^2 v
\]
and $\skla{\cdot,\cdot}_1=\skla{\cdot,\cdot}_0-\skla{\partial_x^2\cdot,\cdot}_0$.
\end{proof}

We now estimate the remainder $\Rc$ of Lemma \ref{lemma-energy}, which describes the change in the energy along numerical solutions. The crucial observation is that we gain a factor $\tau$ without requiring more regularity than $H^2\times H^1$ of the difference of the corresponding numerical solutions. 

\begin{lemma}[Bound of the change $\Rc$ in the energy]\label{lemma-remainder-energyest}
Let the filter functions satisfy Assumptions \ref{assum_slw_psibds} and \ref{assum_qlw_psi}. For numerical solutions $(u_{n},\dot{u}_{n}) \in H^{2}\times H^1$ and $(v_{n},\dot{v}_{n}) \in H^{3}\times H^2$ with
\[
\normv{(u_n,\dot{u}_n)}_1\le M \myand \normv{(v_n,\dot{v}_n)}_2\le M
\]
we have for the remainder $\Rc$ of Lemma \ref{lemma-energy} the bound
\[
\absbig{\Rc\klabig{\Phi u_{n+1},\Phi u_n,\Phi v_{n+1},\Phi v_n}} \le C_M \tau \normvbig{(u_n,\dot{u}_n)-(v_n,\dot{v}_n)}_{1}^2 .
\]
\end{lemma}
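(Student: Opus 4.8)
The plan is to exploit three features of $\Rc$: the antisymmetric structure of $\widetilde\Rc$, which makes every contribution requiring more than $H^{2}\times H^{1}$ regularity of the difference cancel; the $O(\tau)$ smallness of the increment between two consecutive steps; and the fact that whatever top-order derivatives survive can always be arranged to fall on the smoother argument, which lies in $H^{3}$. Throughout I write $w_n=u_n-v_n$, $\dot w_n=\dot u_n-\dot v_n$, and abbreviate $p=\Phi w_{n+1}$, $q=\Phi w_n$; by Lemma \ref{lemma-singlestep} one has $u_{n+1}\in H^2$ and $v_{n+1}\in H^3$.

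First I would record the preliminary estimates that supply the factor $\tau$. Reading $u_{n+1}-u_n$ off the one-step formula \eqref{eq-method-onestep} and using $\abs{1-\cos\xi}\le\abs\xi$ for the propagator, the boundedness of $\sinc(\tau\Om)$, and the smoothing estimate \eqref{eq-est-aux} for the nonlinear increment, I obtain
\[
\norm{\Phi(u_{n+1}-u_n)}_1\le C_M\tau,\qquad \norm{\Phi(v_{n+1}-v_n)}_2\le C_M\tau,
\]
the second bound using the extra regularity $(v_n,\dot v_n)\in H^{3}\times H^{2}$. Subtracting the two expansions and invoking the Lipschitz estimate \eqref{eq-lipschitz} with $s=0$ gives $\norm{p-q}_1\le C_M\tau\normv{(w_n,\dot w_n)}_1$, and likewise $\norm{a(\Phi u_{n+1})-a(\Phi u_n)}_1+\norm{a(\Phi u_{n+1})-a(\Phi u_n)}_{L^\infty}\le C_M\tau$. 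Finally, stability (Lemma \ref{lemma-stability-naive}) yields $\normv{(w_{n+1},\dot w_{n+1})}_1\le C_M\normv{(w_n,\dot w_n)}_1$, so any quantity carrying a step index $n+1$ can be re-estimated at step $n$.

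The heart of the argument is $\widetilde\Rc$ of \eqref{rem}. Inserting $f(u)-f(v)=a(u)\partial_x^2(u-v)+(a(u)-a(v))\partial_x^2 v$, I split $\widetilde\Rc$ into a principal part $\skla{p,a(u')\partial_x^2 q}_1-\skla{q,a(u)\partial_x^2 p}_1$ and a secondary part collecting the $(a(u)-a(v))\partial_x^2 v$ terms. In the principal part I first replace $a(u)$ by $a(u')$; the discrepancy has the $O(\tau)$ coefficient $a(\Phi u_{n+1})-a(\Phi u_n)$ and, after one integration by parts moving a derivative off $\partial_x^2 p$, is bounded directly by $C_M\tau\normv{(w_n,\dot w_n)}_1^2$. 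For the remaining $\skla{p,a(u')\partial_x^2 q}_1-\skla{q,a(u')\partial_x^2 p}_1$ with the \emph{same} coefficient, I expand $\skla{\cdot,\cdot}_1=\skla{\cdot,\cdot}_0+\skla{\partial_x\cdot,\partial_x\cdot}_0$ and integrate by parts: the contributions involving the highest derivatives cancel because multiplication by the real-valued $a(u')$ is self-adjoint on $L^2$, leaving a first-order antisymmetric bilinear form $L(p,q)-L(q,p)$ that is well defined on $H^{2}\times H^{1}$. Only now do I use smallness: since $L(p,q)-L(q,p)=L(p-q,q)-L(q,p-q)$ vanishes at $p=q$, the factor $p-q=O(\tau)$ emerges and the principal part is $\le C_M\tau\normv{(w_n,\dot w_n)}_1^2$. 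The crucial point is the order of operations — integrate by parts to reach a finite first-order form, and only then extract $\tau$ — since the pieces before cancellation would require $H^{3}$ regularity that $w_n$ lacks.

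It remains to treat the secondary part of $\widetilde\Rc$ together with the difference $\Rc^*(u,v)-\Rc^*(u',v')$ from \eqref{eq-rem-all}. In all these terms the second derivatives fall on $v$ or $v'\in H^3$, so every factor is well defined; each term is quadratic in the difference but carries no $\tau$ on its own. The $\tau$ is produced by the consecutive-step structure: writing each quantity as a difference between level $n+1$ and level $n$ and using $\norm{p-q}_1=O(\tau)$, $\norm{v_{n+1}-v_n}_2=O(\tau)$ and $a(u)-a(v)=O(\norm{w})$, each difference is $O(\tau\normv{(w_n,\dot w_n)}_1^2)$; the last two summands of $\Rc^*$ are easier still, since their explicit $\tau^2$ prefactor combines with $\tau\norm{\Psi_1\,\cdot}_1\le\norm{\Phi\,\cdot}_0$ from \eqref{eq-assumfilter-psi1phi} to absorb every power of $\tau^{-1}$. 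Summing the principal, secondary and $\Rc^*$ contributions gives the asserted bound. I expect the main obstacle to be precisely the principal term: arranging the integrations by parts so that every contribution demanding regularity beyond $H^{2}\times H^{1}$ cancels antisymmetrically before the $O(\tau)$ factor is extracted, while carefully tracking the mismatch between the coefficients $a(u)$ and $a(u')$ at the two time levels.
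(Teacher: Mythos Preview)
Your proposal is correct and follows essentially the same approach as the paper: both rely on the $O(\tau)$ time-increment estimates \eqref{eq-remainderest-aux-u}--\eqref{eq-remainderest-aux-d}, the self-adjointness of multiplication by the real-valued coefficient $a$ to cancel the top-order contributions in $\widetilde\Rc$, and the extra $H^3$ regularity of $v$ to absorb the remaining $\partial_x^2 v$ factors. The only cosmetic difference is the bookkeeping of the principal term: you first equalize the two coefficients $a(u)$ and $a(u')$ (paying an $O(\tau)$ error), then use self-adjointness to obtain a complete cancellation down to a genuinely first-order antisymmetric form from which $\tau$ is extracted via $p-q$, whereas the paper keeps the two distinct coefficients and uses the same self-adjointness once to combine $\langle\partial_x^2 p,a(\hat u_n)\partial_x^2 q\rangle_0$ and $\langle\partial_x^2 q,a(\hat u_{n+1})\partial_x^2 p\rangle_0$ into the single term $\langle\partial_x^2 p,(a(\hat u_{n+1})-a(\hat u_n))\partial_x^2 q\rangle_0$ with the $O(\tau)$ coefficient already present.
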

\begin{proof}
The main task is to get the factor $\tau$ in the estimate. This is done with the observation that we have 
\begin{equation}\label{eq-remainderest-aux-u}
\norm{u_{n+1}-u_n}_1 \le C_M \tau,
\end{equation}
which follows from \eqref{eq-method-onestep} using the property \eqref{eq-est-aux} with $s=1$ and $\norm{(\cos(\tau\Om)-1)u_n}_1   = 2 \norm{\sin(\frac12\tau\Om)^2u_n}_1 \le \norm{\tau \Om u_n}_1 = \tau \norm{u_n}_{2}$. Similarly, we have
\begin{equation}\label{eq-remainderest-aux-v}
\norm{v_{n+1}-v_n}_2 \le C_M \tau
\end{equation}
by the higher regularity of $(v_n,\dot{v}_n)$  and also
\begin{equation}\label{eq-remainderest-aux-d}
\norm{u_{n+1}-u_n - (v_{n+1}-v_n)}_1 \le C_M \tau \normvbig{(u_n,\dot{u}_n)-(v_n,\dot{v}_n)}_{1}.
\end{equation}
 Under the given regularity assumptions, the differences $u_{n+1}-u_n$ in $H^1$ and $v_{n+1}-v_n$ in $H^2$ thus allow us to gain a factor $\tau$. 

Our goal is therefore to recover in the remainder $\Rc$ defined in \eqref{eq-rem-all} such differences.  In the following we set
\begin{equation}\label{uvPhi}
\widehat{u}_n = \Phi u_n, \quad \widehat{v}_{n} = \Phi v_n.
\end{equation}
In this notation  and using that $\skla{\cdot,\cdot}_{1} =\skla{\cdot,\cdot}_{0} + \skla{\partial_x \cdot,\partial_x \cdot}_{0}$ we can express the remainder $\Rc$ in the following way: We have
\[
\Rc\klabig{\widehat{u}_{n+1},\widehat{u}_{n},\widehat{v}_{n+1},\widehat{v}_{n}} =  \Rc_0 + \Rc_1 +   \klabig{ \Rc^*\klabig{\widehat{u}_{n+1},\widehat{v}_{n+1}} - \Rc^*\klabig{\widehat{u}_{n},\widehat{v}_{n}}},
\]
where
\begin{multline*}
\Rc_0 = \sklabig{\widehat{u}_{n+1}-\widehat{v}_{n+1} , a(\widehat{u}_n)\partial_x^2 \widehat{u}_n-a (\widehat{v}_{n}) \partial_x^2 \widehat{v}_{n} }_{0}\\ - \sklabig{\widehat{u}_n-\widehat{v}_{n} , a(\widehat{u}_{n+1})\partial_x^2 \widehat{u}_{n+1}-a (\widehat{v}_{n+1}) \partial_x^2 \widehat{v}_{n+1} }_{0}
\end{multline*}
and
\begin{multline*}
\Rc_1 = \sklabig{\partial_x(\widehat{u}_{n+1}-\widehat{v}_{n+1}) , \partial_x \left(a(\widehat{u}_n)\partial_x^2 \widehat{u}_n-a (\widehat{v}_{n}) \partial_x^2 \widehat{v}_{n}\right)}_{0}\\ - \sklabig{\partial_x(\widehat{u}_n-\widehat{v}_{n}) ,\partial_x\left( a(\widehat{u}_{n+1})\partial_x^2 \widehat{u}_{n+1}-a (\widehat{v}_{n+1}) \partial_x^2 \widehat{v}_{n+1} \right)}_{0}.
\end{multline*}

With the aid of integration by parts and adding zeroes we obtain that
\begin{multline*}
\Rc_1 = - \sklabig{\partial_x^2\big(\widehat{u}_{n+1}-\widehat{v}_{n+1}\big) , a(\widehat{u}_n)\partial_x^2 \big(\widehat{u}_n-\widehat{v}_{n})+ \big(a(\widehat{u}_n)-a (\widehat{v}_{n}) \big)\partial_x^2 \widehat{v}_{n}}_{0}\\ + \sklabig{\partial_x^2 \big(\widehat{u}_n-\widehat{v}_{n}\big) , a(\widehat{u}_{n+1})\partial_x^2 \big(\widehat{u}_{n+1}-\widehat{v}_{n+1}\big)+ \big(a(\widehat{u}_{n+1})-a (\widehat{v}_{n+1})\big) \partial_x^2 \widehat{v}_{n+1}}_{0}.
\end{multline*}
Note that by symmetry we have for the first term that
\begin{multline*}
\sklabig{\partial_x^2\big(\widehat{u}_{n+1}-\widehat{v}_{n+1}\big) , a(\widehat{u}_n)\partial_x^2 \big(\widehat{u}_n-\widehat{v}_{n})}_0 =\sklabig{\partial_x^2\big(\widehat{u}_{n}-\widehat{v}_{n}\big) , a(\widehat{u}_n)\partial_x^2 \big(\widehat{u}_{n+1}-\widehat{v}_{n+1})}_0
\end{multline*}
which we can combine with the first term in the second row, i.e., 
\begin{multline*}
\Rc_1 = \sklabig{\partial_x^2\big(\widehat{u}_{n+1}-\widehat{v}_{n+1}\big) , \big(a(\widehat{u}_{n+1})- a(\widehat{u}_n)\big)\partial_x^2 \big(\widehat{u}_n-\widehat{v}_{n})}_0\\
-\sklabig{\partial_x^2\big(\widehat{u}_{n+1}-\widehat{v}_{n+1}\big) , \big(a(\widehat{u}_n)-a (\widehat{v}_{n}) \big)\partial_x^2 \widehat{v}_{n}}_{0}\\ + \sklabig{\partial_x^2 \big(\widehat{u}_n-\widehat{v}_{n}\big) ,  \big(a(\widehat{u}_{n+1})-a (\widehat{v}_{n+1})\big) \partial_x^2 \widehat{v}_{n+1}}_{0}.
\end{multline*}
Adding and subtracting the term $\sklabig{\partial_x^2\big(\widehat{u}_n-\widehat{v}_{n} \big) , \big(a(\widehat{u}_n)-a (\widehat{v}_{n}) \big)\partial_x^2 \widehat{v}_{n}}_{0}$ (and combining it with the term in the second row) furthermore yields that
\begin{multline*}
\Rc_1 =  \sklabig{\partial_x^2\big(\widehat{u}_{n+1}-\widehat{v}_{n+1}\big) , \big(a(\widehat{u}_{n+1})- a(\widehat{u}_n)\big)\partial_x^2 \big(\widehat{u}_n-\widehat{v}_{n})}_0\\
+\sklabig{\partial_x^2\big(\widehat{u}_n-\widehat{v}_{n} - (\widehat{u}_{n+1}-\widehat{v}_{n+1})\big) , \big(a(\widehat{u}_n)-a (\widehat{v}_{n}) \big)\partial_x^2 \widehat{v}_{n}}_{0}\\ 
-\sklabig{\partial_x^2\big(\widehat{u}_n-\widehat{v}_{n} \big) , \big(a(\widehat{u}_n)-a (\widehat{v}_{n}) \big)\partial_x^2 \widehat{v}_{n}}_{0}\\ 
+ \sklabig{\partial_x^2 \big(\widehat{u}_n-\widehat{v}_{n}\big) ,\big(a(\widehat{u}_{n+1})-a (\widehat{v}_{n+1})\big) \partial_x^2 \widehat{v}_{n+1}}_{0}.
\end{multline*}
Finally, adding and subtracting the term $\sklabig{\partial_x^2 \big(\widehat{u}_n-\widehat{v}_{n}\big) ,\big(a(\widehat{u}_{n+1})-a (\widehat{v}_{n+1})\big) \partial_x^2 \widehat{v}_{n}}_{0} $ (and combining it with the terms in the last two rows) and using integration by parts on the term in the second row, we obtain that
\begin{multline*}
\Rc_1 =  \sklabig{\partial_x^2\big(\widehat{u}_{n+1}-\widehat{v}_{n+1}\big) , \big(a(\widehat{u}_{n+1})- a(\widehat{u}_n)\big)\partial_x^2 \big(\widehat{u}_n-\widehat{v}_{n})}_0\\
-\sklabig{\partial_x\big(\widehat{u}_n-\widehat{v}_{n} - (\widehat{u}_{n+1}-\widehat{v}_{n+1})\big) ,\partial_x\left( \big(a(\widehat{u}_n)-a (\widehat{v}_{n}) \big)\partial_x^2 \widehat{v}_{n}\right)}_{0}\\ 
+\sklabig{\partial_x^2\big(\widehat{u}_n-\widehat{v}_{n} \big) , \big( a(\widehat{u}_{n+1}) - a(\widehat{v}_{n+1}) - (a(\widehat{u}_n)-a (\widehat{v}_{n})) \big)\partial_x^2 \widehat{v}_{n}}_{0}\\ 
+ \sklabig{\partial_x^2 \big(\widehat{u}_n-\widehat{v}_{n}\big) ,\big(a(\widehat{u}_{n+1})-a (\widehat{v}_{n+1})\big) \partial_x^2 \big(\widehat{v}_{n+1}-\widehat{v}_{n}\big)}_{0}.
\end{multline*}
With the aid of the bilinear estimates \eqref{eq-algebra} we may thus bound the remainder $\Rc_1$ as follows: We have
\begin{equation}
\begin{aligned}\label{bound1-R1}
\vert \Rc_1 \vert &\leq C \Vert \widehat{u}_{n+1} - \widehat{v}_{n+1}\Vert_2 \Vert a(\widehat{u}_{n+1}) - a(\widehat{u}_n)\Vert_1 \Vert \widehat{u}_n-\widehat{v}_{n}\Vert_2\\
&\qquad +C\Vert \widehat{u}_{n+1}-\widehat{v}_{n+1} - (\widehat{u}_n-\widehat{v}_{n})\Vert_1 \Vert a(\widehat{u}_n)-a(\widehat{v}_{n})\Vert_1 \Vert \widehat{v}_{n}\Vert_3 \\
&\qquad +C \Vert \widehat{u}_n - \widehat{v}_{n}\Vert_2 \Vert a(\widehat{u}_{n+1})-a(\widehat{v}_{n+1}) - (a(\widehat{u}_n) - a(\widehat{v}_{n}))\Vert_1 \Vert \widehat{v}_{n}\Vert_2\\
&\qquad + C\Vert \widehat{u}_n-\widehat{v}_{n}\Vert_2 \Vert a(\widehat{u}_{n+1})-a(\widehat{v}_{n+1}) \Vert_1 \Vert \widehat{v}_{n+1}-\widehat{v}_{n}\Vert_2.
\end{aligned}
\end{equation}
To estimate the quadruple term in $a$ in the third line of \eqref{bound1-R1},   we consider the smooth function $H(u,e) = a(u+e)-a(u)$ and note that by \eqref{eq-algebra} and since $H(u,0)=0$
\begin{align*}
  \norm{H(u,e)-H(v,f)}_1 &\le \norm{H(u,e)-H(u,f)}_1 + \norm{H(u,f)-H(v,f)}_1\\
  &\le \Lambda\klabig{\norm{u}_1+\norm{v}_1+\norm{e}_1+\norm{f}_1} \klabig{ \norm{e-f}_1 + \norm{u-v}_1 \norm{f}_1}
\end{align*}
with a non-decreasing function $\Lambda(\cdot)$. With $u=\widehat{u}_n$, $e=\widehat{u}_{n+1}-\widehat{u}_n$, $v=\widehat{v}_n$ and $f=\widehat{v}_{n+1}-\widehat{v}_n$, this yields for the quadruple term in $a$ in the third line of \eqref{bound1-R1} 
\begin{multline}\label{quada}
  \normbig{ \klabig{a(\widehat u_{n+1})-a(\widehat u_{n})} - \klabig{a(\widehat v_{n+1})-a(\widehat v_{n})} }_1 \le \Lambda\klabig{2\norm{\widehat u_n}_1+2\norm{\widehat v_n}_1+\norm{\widehat u_{n+1}}_1+\norm{\widehat v_{n+1}}_1} \\
   \cdot \klaBig{ \normbig{\widehat u_{n+1}- \widehat u_n - (\widehat v_{n+1}-\widehat v_n)}_1 + \norm{\widehat u_{n}-\widehat v_{n}}_1 \norm{\widehat v_{n+1}-\widehat v_{n}}_1 }.
\end{multline}

Thanks to the bound \eqref{propdephi1} on the filter functions we obtain with the notation \eqref{uvPhi} that
\begin{equation}\label{uphiu}
\Vert \widehat{u}_n - \widehat{v}_{n} \Vert_s   =  \Vert \Phi (u_n-v_n)\Vert_s \leq \Vert u_n-v_n\Vert_s.
\end{equation}
Plugging the bounds  \eqref{quada} and \eqref{uphiu} together with the bounds on the difference  $\Vert u_{n+1}-u_n\Vert_1$ given in   \eqref{eq-remainderest-aux-u}, the difference $\Vert v_{n+1}-v_n\Vert_{  2 }$ given in   \eqref{eq-remainderest-aux-v} and the difference $\Vert u_{n+1}-u_n-(v_{n+1}-v_n)\Vert_1$ given in \eqref{eq-remainderest-aux-d} as well as the bound on $\Vert  u_{n+1}-v_{n+1}\Vert_{2}$ given in  Lemmas \ref{lemma-singlestep} and \ref{lemma-stability-naive} into \eqref{bound1-R1} yields that
\[
\vert \Rc_1\vert   \le C_M \tau \normvbig{(u_n,\dot{u}_n)-(v_n,\dot{v}_n)}_{1}^2,
\]
where we have used the given regularity assumptions (in particular that $v_n \in H^3$) and that $a$ is a sufficiently smooth function.

Thus, the proof is completed upon computing the comparable bound on the more regular terms $\Rc_0$   and $\Rc^*\kla{\widehat{u}_{n+1},\widehat{v}_{n+1}} - \Rc^*\kla{\widehat{u}_{n},\widehat{v}_{n}}$ by a similar analysis. For example, the difference $\Rc^*\kla{\widehat{u}_{n+1},\widehat{v}_{n+1}} - \Rc^*\kla{\widehat{u}_{n},\widehat{v}_{n}}$ contains the difference
\begin{multline*}
  \Rc^*_1 = \sklabig{ \cos(\tau\Om) (\widehat{u}_{n+1}-\widehat{v}_{n+1}), a(\widehat{u}_{n+1}) \partial_x^2(\widehat{u}_{n+1}-\widehat{v}_{n+1}) }_0 \\
  - \sklabig{ \cos(\tau\Om) (\widehat{u}_{n}-\widehat{v}_{n}), a(\widehat{u}_{n}) \partial_x^2(\widehat{u}_{n}-\widehat{v}_{n}) }_0,
\end{multline*}
which can be split as
\begin{multline*}
  \Rc^*_1
  = \sklabig{ \cos(\tau\Om) \klabig{\widehat{u}_{n+1}-\widehat{u}_n-(\widehat{v}_{n+1}-\widehat{v}_n)}, a(\widehat{u}_{n+1}) \partial_x^2(\widehat{u}_{n+1}-\widehat{v}_{n+1}) }_0\\
  + \sklabig{ \cos(\tau\Om) (\widehat{u}_{n}-\widehat{v}_{n}), \klabig{a(\widehat{u}_{n+1})-a(\widehat{u}_{n})} \partial_x^2(\widehat{u}_{n+1}-\widehat{v}_{n+1}) }_0\\
  + \sklabig{ \cos(\tau\Om) (\widehat{u}_{n}-\widehat{v}_{n}),  a(\widehat{u}_{n})\partial_x^2 \klabig{\widehat{u}_{n+1}-\widehat{u}_n-(\widehat{v}_{n+1}-\widehat{v}_n)} }_0.
\end{multline*}
After partial integration in the last term, this can be estimated as above by
\begin{multline*}
  \abs{\Rc^*_1}
  \le \norm{\widehat{u}_{n+1}-\widehat{u}_n-(\widehat{v}_{n+1}-\widehat{v}_n)}_0 \norm{a(\widehat{u}_{n+1})}_1 \norm{\widehat{u}_{n+1}-\widehat{v}_{n+1}}_2\\
  + \norm{\widehat{u}_{n}-\widehat{v}_{n}}_0 \norm{a(\widehat{u}_{n+1})-a(\widehat{u}_{n})}_1 \norm{\widehat{u}_{n+1}-\widehat{v}_{n+1} }_2\\
  + \norm{\widehat{u}_{n}-\widehat{v}_{n}}_1 \norm{a(\widehat{u}_{n})}_1 \norm{\widehat{u}_{n+1}-\widehat{u}_n-(\widehat{v}_{n+1}-\widehat{v}_n)}_1,
\end{multline*}
and hence
\[
\vert \Rc_1^*\vert   \le C_M \tau \normvbig{(u_n,\dot{u}_n)-(v_n,\dot{v}_n)}_{1}^2.
\]
Another exemplary term in the difference $\Rc^*\kla{\widehat{u}_{n+1},\widehat{v}_{n+1}} - \Rc^*\kla{\widehat{u}_{n},\widehat{v}_{n}}$ is 
\[
\Rc^*_2 = \tfrac{1}{4} \tau^2 \kappa \,\normbig{ \Psi_1 \klabig{(a(\widehat{u}_{n+1}) -a(\widehat{v}_{n+1}))\partial_{x}^2 \widehat{v}_{n+1}} }_1^2,
\]
for which we get with \eqref{eq-algebra}, \eqref{eq-assumfilter-bounds} and Lemmas \ref{lemma-singlestep} and \ref{lemma-stability-naive}
\begin{align*}
  \abs{\Rc^*_2} &\le C \tau^2 \normbig{ a(\widehat{u}_{n+1}) -a(\widehat{v}_{n+1})}_1^2 \norm{\widehat{v}_{n+1} }_3^2 \le C_M \tau \normvbig{(u_n,\dot{u}_n)-(v_n,\dot{v}_n)}_{1}^2.\qedhere
\end{align*}
\end{proof}

In the situation outlined in Section \ref{subsec-outline}, we get from Lemmas \ref{lemma-energy} and \ref{lemma-remainder-energyest} the following estimate.

\begin{proposition}[Stability]\label{prop-stability}
Let the filter functions satisfy Assumptions \ref{assum_slw_psibds} and \ref{assum_qlw_psi}. If $(u,\partial_t u)$ is a solution to \eqref{eq-qlw-compact} in $H^{3}\times H^2$ with 
\[
\normvbig{\klabig{u(\cdot,t_n),\partial_t u(\cdot,t_n)}}_2\le M,
\]
and if $(u_{n},\dot{u}_{n})\in H^2\times H^1$ is a corresponding numerical solution with
\[
\normv{(u_{n},\dot{u}_n)}_1 \le 2 M,
\]
then we have 
\begin{multline*}
\absBig{\Ec\klaBig{(u_{n+1},\dot{u}_{n+1})-\ph_\tau\klabig{u(\cdot,t_{n}),\partial_t u(\cdot,t_{n})},u_{n+1}}}\\
\le \absbig{\Ec\kla{e_n,\dot{e}_n,u_n}} + C_M \tau \abs{\ka} \, \normv{(e_n,\dot{e}_n)}_1^2
\end{multline*}
with the global error $(e_{n},\dot{e}_{n})$ of \eqref{eq-globalerror}. 
\end{proposition}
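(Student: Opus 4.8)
The plan is to obtain the stated inequality as an immediate consequence of the two preceding lemmas, with essentially no new computation; the only real work is to read Lemmas \ref{lemma-energy} and \ref{lemma-remainder-energyest} with the correct choice of the two solutions that they compare. First I would set $(u_n,\dot u_n)$ to be the given numerical solution and take the second solution to be generated from the exact solution, i.e.\ $(v_n,\dot v_n)=\klabig{u(\cdot,t_n),\partial_t u(\cdot,t_n)}$, so that $(v_{n+1},\dot v_{n+1})=\ph_\tau\klabig{u(\cdot,t_n),\partial_t u(\cdot,t_n)}$. With this identification the difference of the initial data is exactly the global error, $(u_n,\dot u_n)-(v_n,\dot v_n)=(e_n,\dot e_n)$, and the difference of the next iterates is precisely the quantity sitting inside $\Ec$ on the left-hand side of the proposition.

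Both solutions lie in $H^2\times H^1$ (the exact one even in $H^3\times H^2$), so $\Ec$ is well-defined and Lemma \ref{lemma-energy} applies in this orientation; crucially, it is the first solution's iterate $u_{n+1}$ that fills the third slot of $\Ec$, matching the statement. This yields the exact identity
\begin{multline*}
\Ec\klaBig{(u_{n+1},\dot u_{n+1})-\ph_\tau\klabig{u(\cdot,t_n),\partial_t u(\cdot,t_n)},u_{n+1}}\\
=\Ec\kla{e_n,\dot e_n,u_n}+\ka\,\Rc\klabig{\Phi u_{n+1},\Phi u_n,\Phi v_{n+1},\Phi v_n}.
\end{multline*}
Next I would take absolute values and estimate the remainder with Lemma \ref{lemma-remainder-energyest}. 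Its hypotheses are satisfied: the numerical solution obeys $\normv{(u_n,\dot u_n)}_1\le 2M$ and the exact solution obeys $\normvbig{\klabig{u(\cdot,t_n),\partial_t u(\cdot,t_n)}}_2\le M$ with the required extra regularity $H^3\times H^2$. The only mismatch is the factor $2$ in the first bound, which is harmless: applying the lemma with $2M$ in place of $M$ merely replaces the generic constant by another of the same type, still written $C_M$. The lemma then bounds the remainder by $C_M\tau\,\normv{(u_n,\dot u_n)-(v_n,\dot v_n)}_1^2=C_M\tau\,\normv{(e_n,\dot e_n)}_1^2$, and combining this with the identity above gives the claimed estimate, the factor $\abs{\ka}$ arising from the prefactor $\ka$ in front of $\Rc$.

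I do not expect a genuine obstacle at this level, because all the analytic difficulty—in particular the extra power of $\tau$ gained without losing derivatives beyond $H^2\times H^1$—has already been absorbed into Lemma \ref{lemma-remainder-energyest}. The only points that require care are bookkeeping ones: inserting the two solutions into Lemma \ref{lemma-energy} in the orientation that leaves $u_{n+1}$ (and not $v_{n+1}$) in the third argument of $\Ec$, and folding the factor $2$ in the norm bound into the constant $C_M$.
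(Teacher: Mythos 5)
Your proposal is correct and is essentially identical to the paper's own proof, which likewise simply takes $(v_n,\dot v_n)=\klabig{u(\cdot,t_n),\partial_t u(\cdot,t_n)}$ and $(v_{n+1},\dot v_{n+1})=\ph_\tau\klabig{u(\cdot,t_n),\partial_t u(\cdot,t_n)}$ in Lemmas \ref{lemma-energy} and \ref{lemma-remainder-energyest}. Your bookkeeping remarks (the orientation placing $u_{n+1}$ in the third slot of $\Ec$, absorbing the factor $2$ into $C_M$, and the factor $\abs{\ka}$ coming from the prefactor of $\Rc$) are all accurate.
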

\begin{proof}
Take $(v_n,\dot{v}_n)=\kla{u(\cdot,t_n),\partial_t u(\cdot,t_n)}$ and $(v_{n+1},\dot{v}_{n+1})=\ph_\tau \kla{u(\cdot,t_n),\partial_t u(\cdot,t_n)}$ in Lemmas \ref{lemma-energy} and \ref{lemma-remainder-energyest}.
\end{proof}

\begin{remark}
  For a nonzero $g$ in~\eqref{eq-qlw}, the statement of Lemma \ref{lemma-energy} remains valid, but with a remainder $\mathcal{R}^*$ that contains additional terms with $g(u)$ instead of $a(u)\partial_x^2u$. As $g(u)$ is more regular than $a(u)\partial_x^2u$, the remainder estimate of Lemma \ref{lemma-remainder-energyest} extends to these new terms, and hence Proposition \ref{prop-stability} on the stability of the method also holds for nonzero $g$.
\end{remark}

\subsection{Controlling Sobolev norms with the energy}\label{subsec-sobolevnorm-small}

Our aim is to   show that the energy \eqref{eq-energy} can be controlled by Sobolev norms and vice-versa. This is done by estimating the additional contribution $\ka \, \Uc$ from above and below. For small $\abs{\kappa}$, this is elementary, and the main result of this section, Proposition \ref{lowerBoundU} below, can be derived directly from the properties \eqref{eq-assumfilter-bounds} and \eqref{eq-assumfilter-psi1phi} of the filters and \eqref{eq-algebra} of Sobolev spaces using \eqref{eq-proofsinglestep-aux}. For non-small $\kappa$, we have to work harder, and this is the main content of this section.

In the following, we set
\begin{align*}
 & \mathcal{L}(u)
= \ka \Phi a(u)\cos(\tau\Om) \Phi - \tfrac{1}{4} \kappa^2 \Phi a(u) \sin^2(\tau\Om) \Phi^2 a(u) \Phi. 
\end{align*}
Using integration by parts and $\skla{\sin(\tau\Omega)v,\sin(\tau\Omega)v}_0 = \tau^2\norm{\sinc(\tau\Omega)v}_1^2$, we obtain under the Assumption \ref{assum_qlw_psi} that 
\begin{equation}\label{repU}
\ka \, \Uc \kla{ \Phi e, \Phi u} = \sklabig{ \mathcal{L}(\Phi u) \partial_{x}^2 e, \partial_{x}^2 e }_0
\end{equation}
for the term $\Uc$ defined in \eqref{Uc}.
We have to prove an upper and a lower bound for this term, the latter being the crucial and difficult part. 
The key tool to prove the essential lower bound is given by the following lemma.

\begin{lemma}
\label{lem3}
Fix $M>0$, and  let the filter functions satisfy Assumptions \ref{assum_slw_psibds} and \ref{assum_lkap} with constants $0<\delta<1$ and $A_0\ge 0$.
Then,  there exists  $ \tau_{0}>0$ such that for every $\tau \le \tau_{0}$, for every  $v \in L^2$ and for every $u\in H^2$ with
\[
\norm{u}_2\le M,\qquad 1+  \kappa\, a(u(x)) \geq  \tfrac12 \delta > 0, \qquad   \kappa\, a( u(x) ) \le A_0 + \tfrac12 \delta, 
\]
we have that
\begin{equation*}
\Vert v \Vert_{0}^2  + \bigl\langle \mathcal{L}(\Phi u) v,v \bigr\rangle_{0} \geq \tfrac18 \delta \Vert v \Vert_{0}^2.
\end{equation*}
\end{lemma}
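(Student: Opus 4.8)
The goal is to bound the self-adjoint part of $\id+\mathcal{L}(\Phi u)$ from below by $\tfrac18\delta$; in view of the representation \eqref{repU} this is exactly the positivity we want. The plan is to view $\id+\mathcal{L}(\Phi u)$ as a semiclassical pseudodifferential operator with small parameter $\tau$, to compute its principal symbol explicitly, to bound this symbol from below by $\tfrac14\delta$, and then to invoke the semiclassical G\aa rding inequality from the appendix to pass from the symbol bound to the operator bound, the price being a remainder of order $\tau$. A preliminary reduction is needed because the hypotheses are stated for $a(u)$ whereas $\mathcal{L}(\Phi u)$ involves $a(\Phi u)$. Since $\phi(0)=1$ and $\abs{1-\phi(\xi)}\le c_0\xi^2$, interpolating this against $\abs\phi\le1$ gives $\norm{(\Phi-\id)u}_1\le C\tau\norm{u}_2\le C_M\tau$, hence $\norm{\Phi u-u}_{L^\infty}\le C_M\tau$ by Sobolev embedding and $\norm{a(\Phi u)-a(u)}_{L^\infty}\le C_M\tau$ by smoothness of $a$. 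Thus for $\tau\le\tau_0$ small one has, pointwise in $x$, $1+\ka\,a(\Phi u)\ge\tfrac14\delta$ and $\ka\,a(\Phi u)\le A_0+\tfrac34\delta$.

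Writing the Fourier multipliers $\cos(\tau\Om)$, $\sin(\tau\Om)$ and $\Phi=\phi(\tau\Om)$ with symbols $\cos\theta$, $\sin\theta$, $\phi(\theta)$, where $\theta=\tau\skla{\xi}$ is the semiclassical frequency variable, and letting $a(\Phi u)$ enter as the $x$-dependent symbol $a(\Phi u(x))$, the symbolic calculus (composition errors being of order $\tau$) identifies the principal symbol of $\id+\mathcal{L}(\Phi u)$ as
\[
p(x,\theta) = 1 + b\,\phi(\theta)^2\cos\theta - \tfrac14 b^2\,\phi(\theta)^4\sin^2\theta, \qquad b=\ka\,a(\Phi u(x)).
\]
Since we only need a lower bound for $\mathrm{Re}\,\skla{(\id+\mathcal{L}(\Phi u))v,v}_0$, we may replace $\mathcal{L}(\Phi u)$ by its self-adjoint part, whose principal symbol is the already real function $p-1$ up to an error of order $\tau$ (the non-symmetric contributions being commutators of order $\tau$).

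The key algebraic observation is that, with $\beta=b\,\phi(\theta)^2$ and the half-angle identities $\cos\theta=\cos^2\tfrac\theta2-\sin^2\tfrac\theta2$, $\sin^2\theta=4\sin^2\tfrac\theta2\cos^2\tfrac\theta2$, the symbol factors as
\[
p(x,\theta) = \klabig{1+\beta\cos^2\tfrac\theta2}\klabig{1-\beta\sin^2\tfrac\theta2}.
\]
I bound the two factors according to the sign of $b$. If $b\ge0$ then $\beta\ge0$, so the first factor is $\ge1$; for the second I use $\abs\phi\le1$, Assumption \ref{assum_lkap} in the form $A_0\sin(\tfrac12\theta)^2\phi(\theta)^2\le1-\delta$, and the transferred bound on $b$ to get $\beta\sin^2\tfrac\theta2\le A_0\phi(\theta)^2\sin^2\tfrac\theta2+(b-A_0)_+\le(1-\delta)+\tfrac34\delta=1-\tfrac14\delta$, so the second factor is $\ge\tfrac14\delta$. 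If $b<0$ then $\beta<0$, the second factor is $\ge1$, while the first is $\ge1+\beta\ge1+b=1+\ka\,a(\Phi u)\ge\tfrac14\delta$ by $\abs\phi\le1$ and the transferred lower bound. In either case one factor stays $\ge1$ and the other is $\ge\tfrac14\delta$, whence $p(x,\theta)\ge\tfrac14\delta$ uniformly in $x$ and $\theta\ge0$.

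With this uniform symbol bound, the semiclassical G\aa rding inequality of the appendix yields
\[
\norm{v}_0^2+\skla{\mathcal{L}(\Phi u)v,v}_0 = \skla{(\id+\mathcal{L}(\Phi u))v,v}_0 \ge \klabig{\tfrac14\delta-C\tau}\norm{v}_0^2 \ge \tfrac18\delta\norm{v}_0^2
\]
for $\tau\le\tau_0$, once $\tau_0$ is small enough that $C\tau_0\le\tfrac18\delta$ (and small enough for the transfer step). I expect the main obstacle to be the rigorous semiclassical calculus rather than the algebra: the symbol $a(\Phi u(x))$ inherits only the regularity $H^2\subset C^1$ of $u$, so the composition/commutator estimates and the G\aa rding inequality must be used in a version valid for symbols of limited $x$-smoothness, with all constants uniform over the ball $\norm{u}_2\le M$ — this is precisely where the low-regularity estimates of the appendix are needed. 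Secondly, the cumulative $O(\tau)$ remainder (from compositions, from passing to the self-adjoint part, and from G\aa rding) must be tracked carefully so that it is beaten by the symbol gap $\tfrac14\delta$, which is what fixes $\tau_0$ in terms of $\delta$, $A_0$, $M$ and $a$.
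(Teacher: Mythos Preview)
Your argument is correct and follows essentially the same route as the paper: write $\id+\mathcal{L}(\Phi u)$ as a semiclassical pseudodifferential operator, bound its principal symbol from below via the half-angle factorization $(1+\beta\cos^2\tfrac\theta2)(1-\beta\sin^2\tfrac\theta2)$ (this is the paper's Lemma~\ref{lem4}), and conclude by the G\aa rding inequality of the appendix. The only cosmetic difference is that the paper first replaces $\mathcal{L}(\Phi u)$ by $\mathcal{L}(u)$ at the operator level (an $O(\tau)$ perturbation) and then uses the hypotheses on $a(u)$ directly in the symbol, whereas you keep $a(\Phi u)$ in the symbol and transfer the pointwise bounds instead; both variants work equally well.
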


In order to prove the above Lemma we need the following estimate.

\begin{lemma}\label{lem4}
  Let the filter functions satisfy Assumptions \ref{assum_slw_psibds} and \ref{assum_lkap} with constants $0<\delta<1$ and $A_0\ge 0$.
We then have, for all $A\le A_0+\tfrac12\de$ with $1+ A\ge\tfrac12\delta>0$ and all $\xi\ge 0$,
\begin{equation}\label{eq-cossin-ineq}
A\cos(\xi) \phi(\xi)^2 - \tfrac14 A^2 \sin(\xi)^2 \phi(\xi)^4 \ge -1 + \tfrac12\delta.
\end{equation}
\end{lemma}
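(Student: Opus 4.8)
The plan is to establish \eqref{eq-cossin-ineq} by adding $1$ to both sides and exhibiting an exact factorization of the resulting expression. First I would introduce the half-angle abbreviations $p=\phi(\xi)^2$, $c=\cos(\tfrac12\xi)^2$ and $s=\sin(\tfrac12\xi)^2$, so that $c+s=1$, $\cos(\xi)=c-s$ and $\tfrac14\sin(\xi)^2=sc$. Adding $1$ to the left-hand side of \eqref{eq-cossin-ineq} then rewrites it as $1+Ap(c-s)-A^2p^2sc$, and the claim \eqref{eq-cossin-ineq} becomes equivalent to the assertion that this quantity is at least $\tfrac12\delta$.

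The decisive step, and the only nonobvious one, is the identity
\[
1 + Ap(c-s) - A^2 p^2 sc = (1+Apc)(1-Aps),
\]
which is verified by direct expansion. Once this factorization is found, the problem reduces to showing $(1+Ap\cos(\tfrac12\xi)^2)(1-Ap\sin(\tfrac12\xi)^2)\ge\tfrac12\delta$, and the rest is elementary bookkeeping.

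I would then bound the two factors according to the sign of $A$. For $A\ge 0$ the first factor obeys $1+Apc\ge 1$, while Assumption \ref{assum_lkap} in the form $A_0 sp\le 1-\delta$, combined with $p\le 1$ from \eqref{propdephi1} and $s\le 1$, gives $Aps\le(A_0+\tfrac12\delta)sp\le(1-\delta)+\tfrac12\delta=1-\tfrac12\delta$, so the second factor is at least $\tfrac12\delta$; as both factors are positive and the first exceeds $1$, the product is at least $\tfrac12\delta$. For $-1+\tfrac12\delta\le A<0$ the roles swap: since $Aps\le 0$ the second factor is at least $1$, while $pc\in[0,1]$ yields $Apc\ge A$ and hence $1+Apc\ge 1+A\ge\tfrac12\delta$ by the hypothesis $1+A\ge\tfrac12\delta$, so again the product is at least $\tfrac12\delta$. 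In both cases the desired bound follows. The main obstacle is purely the discovery of the factorization; everything after it uses only $\phi^2\le 1$, the averaging bound of Assumption \ref{assum_lkap}, and the constraint on $A$.
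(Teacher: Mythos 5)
Your proof is correct and rests on exactly the same key step as the paper's: the half-angle factorization
\[
1+A\,\phi(\xi)^2\cos(\xi)-\tfrac14 A^2\phi(\xi)^4\sin(\xi)^2=\klabig{1+A\,\phi(\xi)^2\cos(\tfrac12\xi)^2}\klabig{1-A\,\phi(\xi)^2\sin(\tfrac12\xi)^2},
\]
which is precisely how the paper rewrites \eqref{eq-cossin-ineq}. The only difference is cosmetic and comes after the factorization: the paper observes that the left-hand side is concave in $A$ (a downward parabola or linear function) and therefore checks only the two endpoint values $A=-1+\tfrac12\de$ and $A=A_0+\tfrac12\de$, whereas you bound both factors directly for every admissible $A$ via a sign split, using in addition $\phi(\xi)^2\sin(\tfrac12\xi)^2\le 1$; both routes use the same ingredients (Assumptions \ref{assum_slw_psibds} and \ref{assum_lkap} and the constraint on $A$) and are equally rigorous.
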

\begin{proof}
  We use $\cos(\xi) = \cos(\tfrac12\xi)^2-\sin(\tfrac12\xi)^2$ and $\sin(\xi)=2\cos(\tfrac12\xi)\sin(\tfrac12\xi)$ to rewrite \eqref{eq-cossin-ineq} as
\begin{equation}\label{eq-cossin-ineq-temp2}
\klabig{ 1 + A \cos(\tfrac12\xi)^2 \phi(\xi)^2} \klabig{ 1 - A \sin(\tfrac12\xi)^2 \phi(\xi)^2} \ge \tfrac12\de.
\end{equation}
As a function of $A$, the left-hand side of \eqref{eq-cossin-ineq-temp2} is a parabola with a downwards opening or a linear function. To show that \eqref{eq-cossin-ineq-temp2} holds for all $A\le A_0+\tfrac12\de$ with $1+A\ge\tfrac12\de$, it thus suffices to prove it for the two boundary values $-1+\tfrac12\de$ and $A_0+\tfrac12\de$ of $A$.

For $A=A_0+\tfrac12\de\ge 0$, we note that $1 + A \cos(\tfrac12\xi)^2 \phi(\xi)^2 \ge 1$ and
\[
  1 - A \sin(\tfrac12\xi)^2 \phi(\xi)^2 \ge 1 - A_0 \sin(\tfrac12\xi)^2 \phi(\xi)^2 - \tfrac12\de \ge \tfrac12\de
\]
by \eqref{eq-assumfilter-bounds} and \eqref{eq-assumfilter-largedata}, and hence \eqref{eq-cossin-ineq-temp2} holds for the right boundary value.

For $A=-1+\tfrac12\de\le 0$, we note that $1 - A \sin(\tfrac12\xi)^2\phi(\xi)^2 \ge 1$ and 
\[
1 + A \cos(\tfrac12\xi)^2 \phi(\xi)^2 \ge 1 + \klabig{-1+\tfrac12\de}  = \tfrac12\de 
\]
by \eqref{eq-assumfilter-bounds}, and hence \eqref{eq-cossin-ineq-temp2} also holds for the left boundary value.
\end{proof}

\begin{proof}[Proof of Lemma \ref{lem3}] 
We first observe that, by the Cauchy--Schwarz inequality, \eqref{eq-algebra} and \eqref{eq-assumfilter-bounds},
\[
\absBig{  \bigl\langle \mathcal{L}(\Phi u) v,v \bigr\rangle_{0}  -  \bigl\langle \mathcal{L}(u) v,v \bigr\rangle_{0} }  \le C_M \| \Phi u - u \|_1 \| v \|_0^2 \le C_{M} \tau \|v\|_{0}^2,
\]
where we use in the second inequality that $\abs{\phi(\xi)-1}\le   \min(2,c_0\xi^2) \le   C \xi$ for $\xi\ge 0$   by \eqref{propdephi1}.   
This yields
\begin{equation}\label{eq-inequ2}
\bigl\langle \mathcal{L}(\Phi u) v,v \bigr\rangle_{0} \geq \bigl\langle \mathcal{L}(u) v,v \bigr\rangle_{0} - C_M \tau \|v\|_{0}^2.
\end{equation}

Next, we use semiclassical pseudodifferential calculus as presented in Appendix \ref{appendix}. 
We first express the operator $\mathcal{L}(u)$ by quantization of certain symbols. We set, for $x\in\mathbb{T}$ and $\xi\in\R$ and with $\omega=\sqrt{\xi^2+ \tau^2}$,
\begin{align*}
 b_1(x,\xi) &= \phi(\omega), \quad  b_2(x,\xi) = \ka\, a(u(x)),\quad
 b_3(x,\xi) = \cos(\omega), \quad  b_4(x,\xi) = \sin(\omega)^2 \phi(\omega)^2.
\end{align*}
  As stated there is now somewhat unfortunately $\tau$ dependence in the $\omega$ symbol, however the dependence upon our semiclassical parameter $\tau$ arises as a very small, bounded perturbation and hence does not effect any of the semiclassical bounds used.  Note in addition that this slight notational complication arises from the generality of treating Klein-Gordon type operators with our semi-classical formulation and the $\tau$ dependence in $\omega$ would not arise in the wave equation setting.

With the corresponding quantizations $\Op_{b_1}^\tau,\dots,\Op_{b_4}^\tau$   (see equation \eqref{eq-Op} in Appendix \ref{appendix}),   we then have
\[
\mathcal{L}(u) = \Op_{b_1}^\tau \Op_{b_2}^\tau \Op_{b_3}^\tau \Op_{b_1}^\tau - \tfrac14 \Op_{b_1}^\tau \Op_{b_2}^\tau \Op_{b_4}^\tau \Op_{b_2}^\tau \Op_{b_1}^\tau.
\]
Note that all the symbols $b_1,\dots,b_4$ are in $S_{\si, 1}\cap S_{\si+1, 0}$ for $\si=1>\frac12$ since $u$ is in $H^2$   and $\phi$ is has bounded derivative,   and that we have
\[
\abs{b_j}_{\si,1} \le C_M, \qquad \abs{b_j}_{\si+1,0}\le C_M \myfor j=1,\dots,4
\]
by \eqref{eq-algebra-plus}. By \eqref{eq-algebra1}, this also holds for finite products of these symbols.   We refer to Appendix \ref{appendix} for the definition of the symbol classes $S_{\si,0}$ and $S_{\si+1,1}$ and the corresponding seminorms $\abs{\cdot}_{\si,1}$ and $\abs{\cdot}_{\si+1,0}$.   

By using Proposition \ref{propsemi}   repeatedly,   we thus have that 
\[
  \normbig{ \mathcal{L}(u)v - \Op_{b}^\tau(v) }_0 \le C_M \tau \norm{v}_0
\]
with the new symbol
\[
 b(x,\xi) = b_1(x,\xi)b_2(x,\xi)b_3(x,\xi)b_1(x,\xi) - \tfrac14 b_1(x,\xi)b_2(x,\xi)b_4(x,\xi)b_2(x,\xi)b_1(x,\xi).
\]
  This estimate and the Cauchy--Schwarz inequality imply   
  \begin{equation}
  \label{ineq3} \bigl\langle \mathcal{L}(u) v,v \bigr\rangle_{0}
   \geq  \bigl\langle  \Op_{b}^\tau v,v \bigr\rangle_{0} - C_{M} \tau \|v \|_{0}^2.
   \end{equation}

Next we use Proposition \ref{garding} to estimate the term with $\Op_{b}^\tau$ in \eqref{ineq3} further. Note that the symbol $b$ is in $S_{\si+1, 0}\cap S_{\si+1, 1}$ for $\si=1>\frac12$ since $u$ is in $H^2$   and $\phi$ has bounded derivative,   and that we have
\[
\abs{b}_{\si +1,0} \le C_M, \qquad \abs{b}_{\si+1,1}\le C_M.
\]
Note also that 
\[
1 + b (x, \xi)  \geq  \tfrac12 \delta  \qquad\text{for all}\qquad   x\in\mathbb{T},\, \xi\in\R
\]
by Lemma \ref{lem4} (with $A=\ka\, a(u(x))$).
By using Proposition \ref{garding}, we thus obtain that 
\[
\norm{v}_0^2 + \bigl\langle \Op_{b}^\tau v,v\bigr\rangle_{0} \ge \tfrac14 \delta \norm{v}_0^2 - C_M \tau \norm{v }_0^2.
\]
Combining this estimate with \eqref{eq-inequ2} and \eqref{ineq3} yields the statement of the lemma for sufficiently small $\tau$.
      \end{proof}
      
      \begin{proposition}\label{lowerBoundU}
Fix $M>0$ and $\delta>0$, and let the filter functions satisfy Assumptions \ref{assum_slw_psibds}--\ref{assum_lkap} with constants $0<\delta<1$ and $A_0\ge 0$.
Then,  there exists  $ \tau_{0}>0$ such that for every $\tau \le \tau_{0}$, for every $(e,\dot{e})\in H^2\times H^1$ and for every $u\in H^2$ with
\[
\norm{u}_2\le M,\qquad 1+  \kappa\, a(u(x)) \geq  \tfrac12 \delta > 0, \qquad   \kappa\, a(u(x)) \le A_0 + \tfrac12\delta, 
\]
we have that the modified energy \eqref{eq-energy} controls the Sobolev norms, i.e.,
\begin{equation}\label{modSob}
C_{\delta} \normv{(e,\dot{e})}_1^2 \leq \Ec(e,\dot{e},u)\leq C_M \normv{(e,\dot{e})}_1^2
\end{equation}
for two positive constants $C_{\delta}$, $C_M$.
\end{proposition}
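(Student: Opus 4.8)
The plan is to prove the two-sided bound \eqref{modSob} by estimating the extra contribution $\ka\,\Uc(\Phi e,\Phi u)$ in the energy \eqref{eq-energy} from above and below, using the representation \eqref{repU} as $\sklabig{\mathcal{L}(\Phi u)\partial_x^2 e,\partial_x^2 e}_0$. The upper bound in \eqref{modSob} is the easy direction: by Cauchy--Schwarz, the algebra estimates \eqref{eq-algebra}, the filter bounds \eqref{eq-assumfilter-bounds}, and the identity \eqref{eq-proofsinglestep-aux} (to absorb the $\tau\sinc(\tau\Om)$ appearing through $\Psi_1$), one bounds $\abs{\ka\,\Uc(\Phi e,\Phi u)}\le C_M\norm{e}_2^2\le C_M\normv{(e,\dot e)}_1^2$, and combining with $\normv{(e,\dot e)}_1^2$ gives $\Ec(e,\dot e,u)\le C_M\normv{(e,\dot e)}_1^2$. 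The real work is the lower bound, which is where Lemma \ref{lem3} enters.

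For the lower bound, I would apply Lemma \ref{lem3} with the choice $v=\partial_x^2 e$. Writing $\normv{(e,\dot e)}_1^2=\norm{\Om e}_1^2+\norm{\dot e}_1^2$ and using $\norm{\Om e}_1^2\sim \norm{e}_2^2$, the strategy is to split a fixed fraction, say half, of the $H^2$-part of the norm off $e$ to feed into the Gårding-type estimate and keep the other half as a buffer. More precisely, one writes $\normv{(e,\dot e)}_1^2 + \ka\,\Uc(\Phi e,\Phi u)$ and recognizes, via \eqref{repU}, the combination $\norm{\partial_x^2 e}_0^2+\sklabig{\mathcal{L}(\Phi u)\partial_x^2 e,\partial_x^2 e}_0$ to which Lemma \ref{lem3} applies, yielding a lower bound of $\tfrac18\de\norm{\partial_x^2 e}_0^2$. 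The remaining lower-order pieces of $\norm{e}_2^2$ and $\norm{\dot e}_1^2$ are controlled by retaining a fraction of the original $\normv{(e,\dot e)}_1^2$ that was not consumed by the $\norm{\partial_x^2 e}_0^2$ term. The hypotheses of Lemma \ref{lem3} (namely $\norm{u}_2\le M$, $1+\ka a(u)\ge\tfrac12\de$, and $\ka a(u)\le A_0+\tfrac12\de$) are exactly the standing assumptions of Proposition \ref{lowerBoundU}, so they transfer verbatim.

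The main obstacle, and the bookkeeping one must do carefully, is that $\norm{\partial_x^2 e}_0$ is not by itself equivalent to $\norm{e}_2$: the $H^2$-norm also controls lower Fourier modes (the $\skla{j}^4$ weight versus $j^4$), and Lemma \ref{lem3} only produces positivity for the top-order piece $\norm{\partial_x^2 e}_0^2$. The resolution is that the low-order discrepancy $\norm{e}_2^2-\norm{\partial_x^2 e}_0^2$ is itself bounded by $\norm{e}_1^2$, which is a compact, lower-order quantity, and the operator $\mathcal{L}(\Phi u)$ applied to the difference of weights likewise contributes only lower-order terms absorbable into a small multiple of $\norm{e}_2^2$ plus $C_M\norm{e}_1^2$. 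One therefore arranges constants so that a genuinely positive $C_\de\normv{(e,\dot e)}_1^2$ survives after subtracting the controllable remainder; since $\de$ is small this is where the constant $C_\de$ degenerates, which is expected and consistent with the statement. For small $\abs{\ka}$ the whole argument collapses, as noted at the start of Section \ref{subsec-sobolevnorm-small}: the term $\ka\,\Uc$ is then directly bounded by $C\abs{\ka}\norm{e}_2^2$ with a small prefactor, so both inequalities in \eqref{modSob} follow elementarily from \eqref{eq-assumfilter-bounds}, \eqref{eq-assumfilter-psi1phi} and \eqref{eq-proofsinglestep-aux} without any pseudodifferential machinery.
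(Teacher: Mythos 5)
Your route is the same as the paper's: the upper bound via the Cauchy--Schwarz inequality, \eqref{eq-algebra} and the filter bounds, and the lower bound by applying Lemma \ref{lem3} with $v=\partial_x^2 e$ to the exact representation \eqref{repU}. The paper's proof is exactly this and ends there, because the conclusion is immediate once Lemma \ref{lem3} gives
$\ka\,\Uc(\Phi e,\Phi u)=\sklabig{\mathcal{L}(\Phi u)\partial_x^2e,\partial_x^2e}_0\ge\bigl(\tfrac18\de-1\bigr)\norm{\partial_x^2 e}_0^2$:
the coefficient $\tfrac18\de-1$ is \emph{negative} and $\norm{\partial_x^2 e}_0\le\norm{e}_2$, so
\[
\Ec(e,\dot e,u)\ge \norm{e}_2^2+\norm{\dot e}_1^2+\bigl(\tfrac18\de-1\bigr)\norm{e}_2^2=\tfrac18\de\,\norm{e}_2^2+\norm{\dot e}_1^2\ge\tfrac18\de\,\normv{(e,\dot e)}_1^2 .
\]
In other words, the weight discrepancy $\norm{e}_2^2-\norm{\partial_x^2 e}_0^2\ge 0$ that you identify as the ``main obstacle'' enters with a favorable sign, and there is nothing to absorb or control.

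Two statements in your closing paragraphs are actually wrong, and they would matter if the obstacle were real. First, ``feeding half'' of the $H^2$-norm into the G\aa rding-type estimate is incompatible with Lemma \ref{lem3}: that lemma bounds $\norm{v}_0^2+\skla{\mathcal{L}(\Phi u)v,v}_0$ from below with coefficient exactly $1$ on $\norm{v}_0^2$, and the analogue with $\tfrac12\norm{v}_0^2$ is false in general --- by Lemma \ref{lem4} the symbol of $\mathcal{L}$ can come arbitrarily close to $-1+\tfrac12\de$ (take $\xi$ near $0$, where $\phi=1$ by \eqref{eq-assumfilter-bounds}, and $\ka\,a(u)$ constant equal to $-1+\tfrac12\de$, which the hypotheses allow), so $\tfrac12+\mathcal{L}(\Phi u)$ need not be nonnegative. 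Second, your proposed resolution --- subtracting a remainder of size $C_M\norm{e}_1^2$ and absorbing it ``into a small multiple of $\norm{e}_2^2$'' --- cannot work: for $e$ concentrated at low frequencies one has $\norm{e}_1\sim\norm{e}_2$, so a lower-order term with a large constant $C_M$ can never be dominated by $\tfrac18\de\norm{e}_2^2$, and no arrangement of constants fixes this. If such a remainder were genuinely present, the proof would break down at this point. Fortunately \eqref{repU} is an exact identity in $\partial_x^2 e$, so no remainder arises and the one-line sign argument above closes the lower bound; your upper bound and your remark on the small-$\abs{\ka}$ case are fine.
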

\begin{proof} 
First note that for $\tau$ sufficiently small and $0 < \delta <1$ we have that
\begin{equation*}\label{essb:U}
\bigl(\tfrac18 \delta-1\bigr) \bigl\Vert \partial_x^2 e \bigr\Vert_0^2 \leq \ka\, \Uc \kla{\Phi e, \Phi u} \leq  C_{M}\Vert e\Vert_2^2,
\end{equation*}
where the upper bound follows from \eqref{eq-algebra} and the lower bound is a consequence of Lemma \ref{lem3} (with $v=\partial_x^2e$) using the representation of $\kappa\,\Uc(\Phi e, \Phi u)$ given in \eqref{repU}. 
 The bound \eqref{modSob} on the modified energy then follows by its definition in \eqref{eq-energy}.
      \end{proof}

\subsection{Local error bound}\label{subsec-locerror}

Similarly as in the semilinear case \cite{Gauckler2015}, but under higher regularity assumptions on the initial value, we can prove the following local error bound for the numerical method~\eqref{eq-method}. 

\begin{lemma}[Local error bound in $H^2\times H^1$]\label{lemma-localerror-small}
Let the filter functions satisfy Assumptions \ref{assum_slw_psibds} and \ref{assum_qlw_psi}. If $(u,\partial_t u)$ is a solution to \eqref{eq-qlw-compact} in $H^{5}\times H^4$ with
\[
\normvbig{\klabig{u(\cdot,t),\partial_t u(\cdot,t)}}_4\le M \myfor t_n\le t\le t_{n+1},
\]
then we have
\[
\normv{(d_{n+1},\dot{d}_{n+1})}_{1} \le C_M \tau^3 \abs{\ka} 
\]
for the local error $(d_{n+1},\dot{d}_{n+1})$ of \eqref{eq-localerror}.
\end{lemma}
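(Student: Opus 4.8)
The plan is to compare one step of the integrator \eqref{eq-method-onestep} started from the exact data with the exact solution, whose variation-of-constants representation for \eqref{eq-qlw-compact} reads, with $w=u(\cdot,t_n)$ and $\dot w=\partial_t u(\cdot,t_n)$,
\begin{align*}
u(\cdot,t_{n+1}) &= \cos(\tau\Om)w+\tau\sinc(\tau\Om)\dot w+\ka\int_0^\tau (\tau-s)\sinc((\tau-s)\Om)\,f(u(\cdot,t_n+s))\,\drm s,\\
\partial_tu(\cdot,t_{n+1}) &= -\Om\sin(\tau\Om)w+\cos(\tau\Om)\dot w+\ka\int_0^\tau \cos((\tau-s)\Om)\,f(u(\cdot,t_n+s))\,\drm s.
\end{align*}
Subtracting these from \eqref{eq-method-onestep} applied to $(w,\dot w)$, the linear parts cancel exactly, so that the local error \eqref{eq-localerror} equals $\ka$ times the difference of the numerical quadrature and the exact integral in each component. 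Everything thus reduces to quadrature and filter error estimates, and the recurring obstacle is to produce the third power of $\tau$ without spending more than the available $H^{5}\times H^4$ regularity, since each factor $\Om^2$ (from $f$ or from the leading operator) and each filter deviation costs two spatial derivatives.

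For the first component I would Taylor expand $s\mapsto f(u(\cdot,t_n+s))$ to first order around $s=0$ with integral remainder; because $\partial_t^2u=-\Om^2u+\ka f(u)$, each $s$-derivative costs two spatial derivatives, so under \eqref{eq-regularity} the linear term and the remainder contribute $O(\tau^3)$ in $H^2$ (the weight $(\tau-s)\sinc((\tau-s)\Om)$ vanishes at the right endpoint, which is what renders the one-point rule third order). The exact integral of the constant term is $\Om^{-2}(1-\cos(\tau\Om))f(w)$, whereas the numerical constant term is $\tfrac12\tau^2\sinc(\tau\Om)\widehat f(w)=\tfrac12\,\Om^{-2}\sin^2(\tau\Om)\,\phi(\tau\Om)\,f(\Phi w)$, using the filter relation $\psi_1=\sinc\,\phi$ of Assumption \ref{assum_qlw_psi}. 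The heart of the matter is that this relation makes the two mass multipliers agree to high order: $\tfrac12\sin^2(\tau\Om)-(1-\cos(\tau\Om))=-\tfrac12(1-\cos(\tau\Om))^2$ is $O(\min((\tau\Om)^4,1))$, so together with $\Om^{-2}$ it contributes $O(\tau^3)$ in $H^2$ at the cost of only one derivative of $f(w)\in H^3$.

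The remaining filter errors $(\phi(\tau\Om)-1)f(\Phi w)$ and $f(\Phi w)-f(w)$ I would treat with $\abs{1-\phi}\le c_0\xi^2$ from \eqref{propdephi1} and the Lipschitz estimate \eqref{eq-lipschitz}, exploiting that the smoothing operator $\Om^{-1}\sin(\tau\Om)$ hidden in $\Om^{-2}\sin^2(\tau\Om)$ gains a factor $\tau$ when mapping $H^1\to H^2$; using $\norm{(\Phi-I)w}_3\le c_0\tau^2\norm{w}_5$ and $w\in H^{5}$, both are $O(\tau^3)$ in $H^2$. For the second component the numerical quadrature $\tfrac12\tau\cos(\tau\Om)\widehat f(w)+\tfrac12\tau\widehat f(\tilde u_{n+1})$, with $\tilde u_{n+1}$ the first component of $\ph_\tau(w,\dot w)$, is the symmetric trapezoidal rule for the corresponding integral; its symmetry, again a consequence of $\psi_1=\sinc\,\phi$ and of the symmetry of \eqref{eq-method}, cancels the first-order quadrature error and leaves an $O(\tau^3)$ remainder from the smoothness of $s\mapsto f(u(\cdot,t_n+s))$. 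The filter errors are handled as above, and the replacement of $\tilde u_{n+1}$ by $u(\cdot,t_{n+1})$ is controlled by \eqref{eq-lipschitz} together with $\norm{\tilde u_{n+1}-w}_1\le C_M\tau$ from \eqref{eq-remainderest-aux-u}, entering the symmetric combination only at third order; collecting both components yields $\normv{(d_{n+1},\dot d_{n+1})}_1\le C_M\tau^3\abs{\ka}$.

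I expect the main obstacle to be the derivative bookkeeping in the filter errors for a genuinely nontrivial $\phi\neq1$, as in the new method \eqref{filt}: there $f(\Phi w)-f(w)$ is only $O(\tau^2)$ in $H^0$, and recovering the missing power of $\tau$ in $H^2$ requires simultaneously the smoothing $\Om^{-1}\sin(\tau\Om)\colon H^1\to H^2$ of norm $O(\tau)$ and the two extra derivatives of regularity of $H^{5}\times H^4$ over the target space $H^2\times H^1$. This is precisely where the higher regularity assumption, compared with the semilinear analysis of \cite{Gauckler2015}, is consumed.
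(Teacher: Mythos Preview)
Your approach is the paper's: variation of constants, then a split into a quadrature error, filter errors, and the replacement of $\tilde u_{n+1}$ by $u(\cdot,t_{n+1})$. The paper treats both components at once via $R(\tau-t)$ and the second-order Peano kernel of the trapezoidal rule, whereas you handle the components separately (Taylor expansion plus the multiplier identity $\tfrac12\sin^2\xi-(1-\cos\xi)=-\tfrac12(1-\cos\xi)^2$ for the first, trapezoidal rule for the second). This is only an organizational difference; for the first component the trapezoidal node at $s=\tau$ contributes zero anyway, so the two viewpoints coincide.

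There is, however, a genuine gap in your handling of the replacement term in the second component. What you need is
\[
\tfrac12\tau\,\normbig{f(\tilde u_{n+1})-f\bigl(u(\cdot,t_{n+1})\bigr)}_1 \le C_M\tau^3,
\]
and by \eqref{eq-lipschitz} with $s=1$ this requires $\norm{\tilde u_{n+1}-u(\cdot,t_{n+1})}_3\le C_M\tau^2$. Your reference to $\norm{\tilde u_{n+1}-w}_1\le C_M\tau$ from \eqref{eq-remainderest-aux-u} controls the wrong difference in the wrong norm, and there is no ``symmetric combination'' to invoke: only the right endpoint of the trapezoidal rule carries the replacement, so nothing cancels. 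What is actually needed is a bound on the \emph{first-component} local error in $H^3$ of order $\tau^2$. The paper obtains this separately, see \eqref{eq-localerror-est2a} and \eqref{eq-localerror-est2b}, by rerunning the first-component estimates one derivative higher and one power of $\tau$ lower (using $\abs{1-\phi(\xi)}\le C\xi$ in place of $c_0\xi^2$, and the first-order Peano kernel in place of the second). Your first-component analysis can be adapted in exactly the same way, but as written the argument for the second component is incomplete without this intermediate step.
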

\begin{proof}
Without loss of generality, we consider the case $n=0$, that is, we consider the local error
\[
(d_1,\dot{d}_1) = (u_1,\dot{u}_1) - \klabig{ u(\cdot,\tau),\partial_t u(\cdot,\tau) } . 
\]
As in the semilinear case \cite{Gauckler2015}, the proof relies on a comparison of the method in the form \eqref{eq-method-onestep} with the variation-of-constants formula for the exact solution $(u(\cdot,\tau),\partial_t u(\cdot,\tau))$. For initial values \eqref{eq-qlw-init} at time $0$, the va\-ri\-ation-of-con\-stants formula reads
\[
\begin{pmatrix} u(\cdot,\tau)\\ \partial_t u(\cdot,\tau) \end{pmatrix} = 
R(\tau) \begin{pmatrix} u_0\\ \dot{u}_0 \end{pmatrix}
+ \ka 
\int_0^\tau 
R(\tau-t) \begin{pmatrix} 0\\ f(u(\cdot,t)) \end{pmatrix}\,\drm t
\]
with
\[
R(t) = \begin{pmatrix} \cos(t\Om) & t \sinc(t\Om) \\ -\Om \sin(t\Om) & \cos(t\Om) \end{pmatrix}.
\]
Note that this formula makes sense in $H^{2}\times H^{1}$ for solutions in $H^{5}\times H^4$.
Using this formula, the local error is seen to be of the form
\begin{subequations}\label{eq-local}\begin{align}
&\begin{pmatrix} u_1-u(\cdot,\tau) \\ \dot{u}_1-\partial_t u(\cdot,\tau) \end{pmatrix}
= \tfrac12 \tau \ka R(\tau) \begin{pmatrix} 0\\ \widehat{f}(u_0)-f(u_0) \end{pmatrix}
+ \tfrac12 \tau \ka         \begin{pmatrix} 0\\ \widehat{f}(u_1)-f(u_1) \end{pmatrix}\label{eq-local1}\\
 &\qquad + \tfrac12 \tau \ka \begin{pmatrix} 0\\ f(u_1)-f(u(\cdot,\tau)) \end{pmatrix}\label{eq-local3}\\
 &\qquad + \tfrac12 \tau \ka \klabigg{ R(\tau) \begin{pmatrix} 0\\ f(u_0) \end{pmatrix} + R(0) \begin{pmatrix} 0\\ f(u(\cdot,\tau)) \end{pmatrix} }
- \ka \int_0^\tau R(\tau-t) \begin{pmatrix} 0\\ f(u(\cdot,t)) \end{pmatrix}\,\drm t
.\label{eq-local2}
\end{align}\end{subequations}
We estimate the three contributions \eqref{eq-local1}--\eqref{eq-local2} to the local error separately.

(a) The contributions to the local error in the first line \eqref{eq-local1} are due to the introduction of filters in the nonlinearity $\widehat{f}(u)=\Psi_1 f(\Phi u)$. Using that $R(\tau)$ preserves the norm $\normv{\cdot}_{1}$, we get
\[
\normvbigg{ R(\tau) \begin{pmatrix} 0\\ \widehat{f}(u_0)-f(u_0) \end{pmatrix} }_{1} = \norm{\widehat{f}(u_0)-f(u_0)}_{1}.
\]
We then split $\widehat{f}(u_0)-f(u_0)=\Psi_1(f(\Phi u_0)-f(u_0)) + (\Psi_1 f(u_0) - f(u_0))$ and use
\[
\normbig{\Psi_1\klabig{f(\Phi u)-f(u)}}_{1} \le C_{M} \norm{u}_{3} \norm{\Phi u - u}_{3} \le C_M \norm{u}_3 \norm{\tau^2\Omega^2u}_3 \le C_{M} \tau^2 \norm{u}_{5}^{2}
\]
by the Lipschitz property \eqref{eq-lipschitz} and the assumptions \eqref{eq-assumfilter-bounds} on the filter functions as well as 
\[
\norm{\Psi_1 f(u) - f(u)}_{1} \le C \tau^2 \norm{f(u)}_{3}\le C_{M} \tau^2\norm{u}_{5}^{2}
\]
by \eqref{eq-assumfilter-bounds} and \eqref{eq-algebra-f}. This shows that
\[
\normvbigg{ R(\tau) \begin{pmatrix} 0\\ \widehat{f}(u_0)-f(u_0) \end{pmatrix} }_{1} \le C_M \tau^2.
\]
The term in \eqref{eq-local1} with $u_1$ instead of $u_0$ can be dealt with in the same way using in addition that $\norm{u_1}_5\le C_M$ by Lemma~\ref{lemma-singlestep} since $\normv{(u_0,\dot{u}_0)}_4\le M$. This finally yields
\begin{equation}\label{eq-localerror-est1a}
\normv{ \text{term on right-hand side of \eqref{eq-local1}} }_{1} \le C_M \tau^3 \abs{\ka}.
\end{equation}
In the same way we also get
\begin{equation}\label{eq-localerror-est2a}
\normv{ \text{term on right-hand side of \eqref{eq-local1}} }_{2} \le C_M \tau^2 \abs{\ka},
\end{equation}
if we use $\abs{1-\psi_1(\xi)}\le C \xi$ and $\abs{1-\phi(\xi)}\le C \xi$ (which follow from \eqref{eq-assumfilter-bounds}) instead of $\abs{1-\psi_1(\xi)}\le C \xi^2$ and $\abs{1-\phi(\xi)}\le C \xi^2$.

(b) The contribution to the local error in the third line \eqref{eq-local2} is the quadrature error of the trapezoidal rule. With the corresponding second-order Peano kernel $K_2(\si)=\frac12 \si(\si-1)$, it takes the form
\[
\text{term \eqref{eq-local2}} = - \tau^3 \ka \int_0^1 K_2(\si)   h ''(\si  \tau ) \,\drm\si \qquad\text{with}\qquad   h (t) = R(\tau-t) \begin{pmatrix} 0\\ f(u(\cdot,t)) \end{pmatrix}.
\]
We thus have to estimate $\normv{h''(\si \tau)}_{1}$. For that we use, for $\ell=0,1,2$,
\[
\normvbigg{\frac{\drm^\ell}{\drm t^\ell} R(t) \begin{pmatrix} v\\ \dot v\end{pmatrix}}_{1} = \normv{(v,\dot v)}_{1+\ell} \myand 
\normbigg{\frac{\drm^{2-\ell}}{\drm t^{2-\ell}} f\klabig{u(\cdot,t)}}_{1+\ell} \le C_M
\]
by \eqref{eq-algebra} (and in the case $\ell=0$ also \eqref{eq-qlw-compact} to replace $\partial_t^2 u$) since $(u,\partial_t u)$ is bounded in $H^5\times H^4$. This yields
\begin{equation}\label{eq-localerror-est1b}
\normv{ \text{term \eqref{eq-local2}} }_{1} \le C_M \tau^3 \abs{\ka}.
\end{equation}
In the same way we also get
\begin{equation}\label{eq-localerror-est2b}
\normv{ \text{term \eqref{eq-local2}} }_{2} \le C_M \tau^2 \abs{\ka},
\end{equation}
if we use the first-order Peano kernel and $h'$ instead of the second-order Peano kernel and $h''$.

(c) The contribution to the local error in the second line \eqref{eq-local3} concerns only the error in the velocities. Using that we thus already have $\norm{u_1-u(\cdot,\tau)}_3\le C_M \tau^2 \abs{\ka}$ by the estimates \eqref{eq-localerror-est2a} and \eqref{eq-localerror-est2b} and that
\[
\normbig{ f(u_1) - f\klabig{u(\cdot,\tau)}}_1 \le C_M \norm{u_1-u(\cdot,\tau)}_3
\]
by \eqref{eq-lipschitz} and Lemma \ref{lemma-singlestep}, we get
\[
\normv{ \text{term \eqref{eq-local3}} }_{1} \le C_M \tau^3 \abs{\ka}.
\]
Together with the estimates \eqref{eq-localerror-est1a} and \eqref{eq-localerror-est1b} in (a) and (b), this completes the proof of the stated local error bound.
\end{proof}

In view of \eqref{eq-erroraccum}, we are not so much interested in local errors $(d,\dot{d})$ in the $H^2\times H^1$-norm as estimated in the previous lemma, but instead in energy differences of the form $\Ec\kla{e,\dot{e},u} 
- \Ec\kla{e-d,\dot{e}-\dot{d},u}$, where $(d,\dot{d})$ is a local error and $(e,\dot{e})$ is a global error. This extension is done in the following lemma.

\begin{lemma}\label{lemma-localerror-energy-small}
Let the filter functions satisfy Assumptions \ref{assum_slw_psibds} and \ref{assum_qlw_psi}. If $(e,\dot{e})\in H^2\times H^1$, $(d,\dot{d})\in H^2\times H^1$ and $u\in H^2$ with
\[
\norm{e}_2\le M, \qquad
\norm{d}_2\le M, \qquad
\norm{u}_2\le M,
\]
then we have 
\[
\absbig{\Ec\kla{e,\dot{e},u} 
- \Ec\kla{e-d,\dot{e}-\dot{d},u}}
\le C_M \klabig{ \tau^{-1} \abs{\ka}^{-1} \normv{(d,\dot{d})}_1^2 + \tau \abs{\ka}\, \normv{(e,\dot{e})-(d,\dot{d})}_1^2 } .
\]
\end{lemma}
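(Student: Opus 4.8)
The plan is to exploit that, with the third argument $u$ held \emph{fixed}, the map $(e,\dot e)\mapsto\Ec(e,\dot e,u)$ is a \emph{quadratic form}: the term $\normv{(e,\dot e)}_1^2$ is manifestly quadratic, and $\ka\,\Uc(\Phi e,\Phi u)$ is quadratic in $e$ since both $\sklabig{\cos(\tau\Om)\partial_x^2\Phi e,a(\Phi u)\partial_x^2\Phi e}_0$ and $\normbig{\Psi_1(a(\Phi u)\partial_x^2\Phi e)}_1^2$ are bilinear in $e$ for fixed $u$. For any quadratic form $Q(w)=\sklabig{Lw,w}_0$ one has the exact identity $Q(w)-Q(w-z)=\sklabig{L(w-z),z}_0+\sklabig{Lz,w-z}_0+\sklabig{Lz,z}_0$, which separates a part \emph{bilinear} in the increment $w-z$ and $z$ from a part \emph{quadratic} in $z$ alone. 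Applying this with $w=(e,\dot e)$, $z=(d,\dot d)$ for the $\normv{\cdot}_1^2$ piece and with $w=\Phi e$, $z=\Phi d$ for the $\Uc$ piece, I would decompose $\Ec(e,\dot e,u)-\Ec(e-d,\dot e-\dot d,u)$ into a sum of \emph{mixed} terms, each bilinear in $b:=(e,\dot e)-(d,\dot d)$ and $(d,\dot d)$, plus \emph{pure} terms quadratic in $(d,\dot d)$.

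For the mixed terms I would estimate each by Cauchy--Schwarz together with the algebra estimates \eqref{eq-algebra}, the filter bounds \eqref{eq-assumfilter-bounds} and the identity \eqref{eq-assumfilter-psi1phi}. The norm piece yields $2\,\absbig{\sklabig{b,(d,\dot d)}_1}\le 2\,\normv{b}_1\normv{(d,\dot d)}_1$; the first term of $\ka\,\Uc$ contributes a factor $\abs\ka$ and, using $\normbig{a(\Phi u)\partial_x^2\Phi w}_0\le C_M\norm{w}_2$ and $\abs{\phi}\le1$, is bounded by $C_M\abs\ka\,\normv{b}_1\normv{(d,\dot d)}_1$; the $\Psi_1$-term carries a prefactor $\tau^2\ka^2$, but each factor $\normbig{\Psi_1(a(\Phi u)\partial_x^2\Phi w)}_1$ costs only $C_M\tau^{-1}\norm{w}_2$ because $\Psi_1=\sinc(\tau\Om)\Phi$ by \eqref{eq-assumfilter-psi1phi} and $\norm{\sinc(\tau\Om)v}_1\le\tau^{-1}\norm{v}_0$ as in \eqref{eq-proofsinglestep-aux}, so the two $\tau^{-1}$'s cancel the $\tau^2$ and this mixed term is bounded by $C_M\ka^2\,\normv{b}_1\normv{(d,\dot d)}_1$. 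To each mixed term $c\,\normv{b}_1\normv{(d,\dot d)}_1$, with $c$ equal to $1$, $C_M\abs\ka$ or $C_M\ka^2$, I then apply Young's inequality in the weighted form $c\,\normv{b}_1\normv{(d,\dot d)}_1\le\tfrac12\tau\abs\ka\,\normv{b}_1^2+\tfrac12 c^2\tau^{-1}\abs\ka^{-1}\normv{(d,\dot d)}_1^2$, which produces exactly the required $\tau\abs\ka\,\normv{b}_1^2$ contribution, while the coefficient $c^2$ of the $(d,\dot d)$-part is a nonnegative power of $\abs\ka$ and hence absorbed into the generic constant.

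The pure terms quadratic in $(d,\dot d)$ are estimated directly in the same manner: the norm piece gives $\normv{(d,\dot d)}_1^2$, the first $\Uc$-term gives $\le C_M\abs\ka\,\normv{(d,\dot d)}_1^2$, and the $\Psi_1$-term gives $\le C_M\ka^2\,\normv{(d,\dot d)}_1^2$. Each of these has the form $\tau^{j}\abs\ka^{k}\,\normv{(d,\dot d)}_1^2=(\tau^{j+1}\abs\ka^{k+1})\,\tau^{-1}\abs\ka^{-1}\normv{(d,\dot d)}_1^2$ with $j\ge 0$, $k\ge 0$, and since $\tau\le\tau_0\le1$ the factor $\tau^{j+1}\abs\ka^{k+1}\le\abs\ka^{k+1}$ is a nonnegative power of $\abs\ka$, hence bounded by the generic constant; this places every pure term under the target quantity $C_M\tau^{-1}\abs\ka^{-1}\normv{(d,\dot d)}_1^2$. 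Collecting the mixed and pure contributions gives the stated bound.

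The step I expect to require the most care is the bookkeeping of the powers of $\tau$ and $\ka$ so that the final estimate matches the \emph{asymmetric} right-hand side, in which $\abs\ka^{-1}$ weights the local-error part and $\abs\ka$ the propagated-error part. The crucial point making this possible is that the loss $\norm{\Psi_1 w}_1\le C\tau^{-1}\norm{w}_0$ is exactly compensated by the prefactor $\tau^2\ka^2$ arising from the two $\ka$-factors in the second term of $\ka\,\Uc$, leaving only a harmless $\ka^2$; together with $\tau\le1$ and the convention that the generic constant may depend on $\max(1,\abs\ka)$, all positive powers of $\abs\ka$ are absorbed. The polarization itself is exact and routine precisely because $u$ is frozen in both evaluations of $\Ec$.
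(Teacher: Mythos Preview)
Your proposal is correct and follows essentially the same approach as the paper: both exploit that $\Ec(\cdot,\cdot,u)$ is quadratic in its first two arguments, split the difference into pure terms in $(d,\dot d)$ and mixed terms, and apply the weighted Young inequality $2\abs{\skla{w,v}}\le\alpha^{-1}\norm{w}^2+\alpha\norm{v}^2$ with $\alpha=\tau\abs{\ka}$ (the paper states this as \eqref{eq-cs} and chooses $\alpha$ only at the very end). Your treatment of the $\Psi_1$-term via \eqref{eq-assumfilter-psi1phi} and \eqref{eq-proofsinglestep-aux}, cancelling the $\tau^{-2}$ against the explicit $\tau^2$ prefactor, is exactly how the paper handles it as well.
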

\begin{proof}
The crucial property that we use in various forms is that, for any $\al>0$,
\begin{equation}\label{eq-cs}
2 \abs{ \skla{w,v} } \le \al^{-1} \norm{w}^2 + \al \norm{v}^2
\end{equation}
for a scalar product $\skla{\cdot,\cdot}$ with associated norm $\norm{\cdot}$, in particular
\[
\absbig{ \norm{v}^2 - \norm{v-w}^2} = \absbig{ \norm{w}^2 + 2 \skla{w,v-w} } \le (1+\al^{-1}) \norm{w}^2 + \al \norm{v-w}^2.
\]
This yields for the first term in the energy difference $\Ec\kla{e,\dot{e},u}- \Ec\kla{e-d,\dot{e}-\dot{d},u}$
\[
\absbig{\normv{(e,\dot{e})}_1^2 - \normv{(e-d,\dot{e}-\dot{d})}_1^2} \le (1+\al^{-1}) \normv{(d,\dot{d})}_1^2 +  \al \normv{(e-d,\dot{e}-\dot{d})}_1^2.
\]
The second term of the energy is $\kappa\,\Uc(\widehat{e},\widehat{u})$ with $\widehat{u}=\Phi u$ and $\widehat{e}=\Phi e$. For the second term in $\Uc$, we get in the energy difference, with $\widehat{u}=\Phi u$, $\widehat{e}=\Phi e$ and $\widehat{d}=\Phi d$,
\begin{multline*}
\tfrac14 \tau^2 \abs{\ka} \absBig{ \normbig{\Psi_1 \klabig{a(\widehat{u})\partial_x^2\widehat{e}} }_1^2 - \normbig{\Psi_1 \klabig{a(\widehat{u})\partial_x^2(\widehat{e}-\widehat{d})} }_1^2}\\
  \le \tfrac14 \tau^2 \abs{\ka} (1+\al^{-1}) \normbig{\Psi_1 \klabig{a(\widehat{u})\partial_x^2\widehat{d}\,} }_1^2
 + \tfrac14 \tau^2 \abs{\kappa} \, \al \, \normbig{\Psi_1 \klabig{a(\widehat{u})\partial_x^2(\widehat{e}-\widehat{d})} }_1^2,
\end{multline*}
and hence, by \eqref{eq-assumfilter-bounds}, \eqref{eq-assumfilter-psi1phi} and \eqref{eq-lipschitz} with $s=0$ (similarly as in \eqref{eq-est-aux}), we get the bound 
$C_M (1+\al^{-1}) \norm{\widehat{d}}_2^2 + C_M \al \norm{\widehat{e}-\widehat{d}}_2^2$
for this term.
For the first term in $\Uc$, we have 
\begin{align*}
&  \sklabig{ \cos(\tau\Om) \widehat{e}, a(\widehat{u})\partial_x^2\widehat{e} }_1  - \sklabig{ \cos(\tau\Om) (\widehat{e}-\widehat{d}), a(\widehat{u})\partial_x^2(\widehat{e}-\widehat{d}) }_1 \\
&\,  = \sklabig{\cos(\tau\Om)\widehat{d},a(\widehat{u})\partial_x^2\widehat{e}}_1
   + \sklabig{\cos(\tau\Om) (\widehat{e}-\widehat{d}), a(\widehat{u})\partial_x^2\widehat{d}}_1\\
&\,  = \sklabig{\cos(\tau\Om)\widehat{d},a(\widehat{u})\partial_x^2\widehat{d}}_1 
 + \sklabig{\cos(\tau\Om)\widehat{d},a(\widehat{u})\partial_x^2(\widehat{e}-\widehat{d}) }_1 + \sklabig{\cos(\tau\Om) (\widehat{e}-\widehat{d}), a(\widehat{u})\partial_x^2\widehat{d}}_1 .
\end{align*}
Using partial integration, \eqref{eq-lipschitz} and \eqref{eq-cs}, we get for these terms similarly as above the bound $C_M (1+\al^{-1}) \norm{\widehat{d}}_2^2 + C_M \al \norm{\widehat{e}-\widehat{d}}_2^2$. By choosing $\al=\tau \abs{\ka}$, the statement of the lemma then follows from assumption \eqref{eq-assumfilter-bounds}.
\end{proof}

From Lemmas \ref{lemma-localerror-small} and \ref{lemma-localerror-energy-small}, we get the following bound for the local error in the form as it appears in \eqref{eq-erroraccum}.

\begin{proposition}[Local error bound in the energy]\label{prop-localerror}
Let the filter functions satisfy Assumptions \ref{assum_slw_psibds} and \ref{assum_qlw_psi}. If $(u,\partial_t u)$ is a solution to \eqref{eq-qlw-compact}  in $H^{5}\times H^4$ with
\[
\normvbig{\klabig{u(\cdot,t),\partial_t u(\cdot,t)}}_4\le M \myfor t_n\le t\le t_{n+1},
\]
and if $(u_{n},\dot{u}_{n})\in H^2\times H^1$ is a corresponding numerical solution with
\[
\normv{(u_{n},\dot{u}_{n})}_1 \le 2 M,
\]
then we have 
\begin{multline*}
\absbig{\Ec\kla{e_{n+1},\dot{e}_{n+1},u_{n+1}} 
- \Ec\kla{e_{n+1}-d_{n+1},\dot{e}_{n+1}-\dot{d}_{n+1},u_{n+1}}} \\
\le C_M \tau^5 \abs{\ka} + C_M \tau \abs{\ka} \, \normv{(e_n,\dot{e}_n)}_1^2
\end{multline*}
with the global errors $(e_{n},\dot{e}_{n})$ and $(e_{n+1},\dot{e}_{n+1})$ of \eqref{eq-globalerror} and the local error $(d_{n+1},\dot{d}_{n+1})$ of \eqref{eq-localerror}. 
\end{proposition}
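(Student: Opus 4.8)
The plan is to derive the statement purely by combining the two preceding lemmas, so that almost no new computation is required. I would invoke Lemma \ref{lemma-localerror-energy-small} with $(e,\dot e)=(e_{n+1},\dot e_{n+1})$, $(d,\dot d)=(d_{n+1},\dot d_{n+1})$ and $u=u_{n+1}$, which bounds the energy difference by
\[
C_M\klabig{\tau^{-1}\abs{\ka}^{-1}\normv{(d_{n+1},\dot d_{n+1})}_1^2 + \tau\abs{\ka}\,\normv{(e_{n+1},\dot e_{n+1})-(d_{n+1},\dot d_{n+1})}_1^2}.
\]
Before this is legitimate I would verify its hypotheses: the $H^2$-bounds $\norm{e_{n+1}}_2\le C_M$, $\norm{d_{n+1}}_2\le C_M$ and $\norm{u_{n+1}}_2\le C_M$ all follow from Lemma \ref{lemma-singlestep} (applied once to $(u_n,\dot u_n)$, bounded by $2M$ in $H^2\times H^1$, and once to the exact value $(u(\cdot,t_n),\partial_t u(\cdot,t_n))$, bounded in $H^5\times H^4$) together with the regularity assumption \eqref{eq-regularity}; the constant $M$ in the hypotheses is thereby simply replaced by a larger constant depending only on $M$.

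For the first term I would substitute the sharp local error bound $\normv{(d_{n+1},\dot d_{n+1})}_1\le C_M\tau^3\abs{\ka}$ of Lemma \ref{lemma-localerror-small}; squaring it and multiplying by $\tau^{-1}\abs{\ka}^{-1}$ produces precisely the contribution $C_M\tau^5\abs{\ka}$. For the second term the key observation is the telescoping identity
\[
(e_{n+1},\dot e_{n+1})-(d_{n+1},\dot d_{n+1}) = \ph_\tau(u_n,\dot u_n)-\ph_\tau\klabig{u(\cdot,t_n),\partial_t u(\cdot,t_n)},
\]
which is immediate from the definitions \eqref{eq-globalerror} and \eqref{eq-localerror} once one notices that the two copies of $(u(\cdot,t_{n+1}),\partial_t u(\cdot,t_{n+1}))$ cancel. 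The naive single-step stability estimate of Lemma \ref{lemma-stability-naive} then bounds the right-hand side by $C_M\normv{(e_n,\dot e_n)}_1$, so the second term is at most $C_M\tau\abs{\ka}\,\normv{(e_n,\dot e_n)}_1^2$. Adding the two contributions yields the claim.

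I do not expect a serious obstacle here, since Lemma \ref{lemma-localerror-energy-small} has already absorbed the delicate part, namely the choice $\al=\tau\abs{\ka}$ that converts a merely $O(\tau^3)$ local error in $H^2\times H^1$ into the favourable $O(\tau^5)$ scaling in the energy. The only genuinely necessary idea is the telescoping identity above, together with the remark that the crude (non-$\e^{C\tau}$) stability constant of Lemma \ref{lemma-stability-naive} is harmless precisely because it multiplies the small factor $\tau\abs{\ka}$; the sharp, energy-based stability that is needed for the eventual accumulation of the error over all time steps is instead supplied separately by Proposition \ref{prop-stability}.
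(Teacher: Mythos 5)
Your proposal is correct and follows essentially the same route as the paper's own proof: apply Lemma \ref{lemma-localerror-energy-small} with $(e,\dot e,u)=(e_{n+1},\dot e_{n+1},u_{n+1})$, insert the local error bound of Lemma \ref{lemma-localerror-small} to get the $C_M\tau^5\abs{\ka}$ term, and convert $(e_{n+1},\dot e_{n+1})-(d_{n+1},\dot d_{n+1})=\ph_\tau(u_n,\dot u_n)-\ph_\tau\klabig{u(\cdot,t_n),\partial_t u(\cdot,t_n)}$ into $C_M\normv{(e_n,\dot e_n)}_1$ via Lemma \ref{lemma-stability-naive}. The only cosmetic difference is that you justify $\norm{d_{n+1}}_2\le C_M$ through Lemma \ref{lemma-singlestep} applied to the exact solution plus the regularity assumption, while the paper reads it off directly from Lemma \ref{lemma-localerror-small}; both are valid.
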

\begin{proof}
We apply Lemma \ref{lemma-localerror-energy-small} with $(e,\dot{e})=(e_{n+1},\dot{e}_{n+1})$, $(d,\dot{d})=(d_{n+1},\dot{d}_{n+1})$ and $u=u_{n+1}$ in combination with Lemma \ref{lemma-localerror-small}. Note that $\norm{d_{n+1}}_2\le C_M$ by Lemma \ref{lemma-localerror-small}, $\norm{u_{n+1}}_2\le C_M$ by Lemma \ref{lemma-singlestep} and $\norm{e_{n+1}}_2\le C_M$ by the assumption on the exact solution and by the just mentioned bound on $u_{n+1}$. Lemmas \ref{lemma-localerror-small} and \ref{lemma-localerror-energy-small} then yield the stated estimate but with $(e_{n+1},\dot{e}_{n+1})-(d_{n+1},\dot{d}_{n+1})$ instead of $(e_n,\dot{e}_n)$ on the right-hand side. To get the final statement, we use the definitions \eqref{eq-globalerror} of $(e_{n+1},\dot{e}_{n+1})$ and \eqref{eq-localerror} of $(d_{n+1},\dot{d}_{n+1})$ and apply Lemma \ref{lemma-stability-naive}.
\end{proof}

\begin{remark}
  The local error bound of Lemma \ref{lemma-localerror-small} is only based on the estimates \eqref{eq-algebra-f} and \eqref{eq-lipschitz}. As they extend to nonzero $g$ in $f$ and \eqref{eq-qlw}, also Lemma \ref{lemma-localerror-small} and Proposition \ref{prop-localerror} extend to this case.
\end{remark}

\subsection{Error accumulation and proof of Theorem \ref{thm-large}}\label{subsec-accumulation-small}

We proceed as outlined in Section \ref{subsec-outline}. Considering the numerical solution $(u_n,\dot{u}_n)$ given by \eqref{eq-method} for $n=0,1,\ldots$, we set
\[
\Ec_n(e,\dot{e}) = \Ec(e,\dot{e},u_n).
\]
We then decompose $\Ec_{n+1}(e_{n+1},\dot{e}_{n+1})$ with the global error $(e_{n+1},\dot{e}_{n+1})$ of \eqref{eq-globalerror} as in \eqref{eq-erroraccum}.

  Adapting the usual inductive argument to prove global error bounds, we assume that the numerical solution $(u_j,\dot{u}_j)$ satisfies, for $j=0,\dots,n$ and with the constants $M$, $A_0$ and $\de$ of Assumption \ref{assum-exact},
\begin{subequations}\label{eq-induction}
\begin{align}
  &\normv{(u_j,\dot{u}_j)}_1 \le 2 M\label{eq-induction-bound}
\end{align}
and
\begin{align}
&1 + \kappa \, a(u_j) \ge \tfrac12 \de \myand
\ka\,a(u_j) \le A_0 + \tfrac12 \de.\label{eq-induction-de-phi}
\end{align}
\end{subequations}
This is clear for $j=0$ by Assumption \ref{assum-exact}. Under these hypotheses, we will prove the error bound of Theorem~\ref{thm-large} until time $t_n=n\tau$. We will then prove that \eqref{eq-induction} also holds for $j=n+1$ to close the inductive argument.

Under the regularity assumption \eqref{eq-regularity} on the exact solution and thanks to \eqref{eq-induction-bound},
we get from Propositions \ref{prop-stability} and \ref{prop-localerror} 
\[
\absbig{\Ec_{j+1}(e_{j+1},\dot{e}_{j+1})} \le \absbig{\Ec_{j}(e_{j},\dot{e}_{j})} + C_M \tau \abs{\ka} \,\normv{(e_j,\dot{e}_j)}_1^2 + C_M \tau^5 \abs{\ka}
\]
for $j=0,\dots,n$.
Thanks to \eqref{eq-induction-de-phi}, we can then apply Proposition \ref{lowerBoundU} (with $u=u_j$) to get
\[
\absbig{\Ec_{j+1}(e_{j+1},\dot{e}_{j+1})} \le \klabig{1 + C_{M} \tau \abs{\ka}} \absbig{\Ec_{j}(e_{j},\dot{e}_{j})} + C_M \tau^5 \abs{\ka}
\]
for $j=0,\dots,n$. Solving this recursion in the standard way yields the error bound
\begin{equation}\label{eq-boundcalE-temp}
\absbig{\Ec_{j+1}(e_{j+1},\dot{e}_{j+1})} \le C_{M} \tau^4 \e^{C_{M} \abs{\ka} t_{j+1} } 
\end{equation}
for $j=0,\dots,n$. By applying once again Proposition \ref{lowerBoundU}, we get the global error bound
\begin{equation}\label{eq-bounde-temp}
\normv{(e_{j+1},\dot{e}_{j+1})}_1^2 \le C_M \tau^4 \e^{C_M \abs{\ka} t_{j+1} },
\end{equation}
for $j=0,\dots,n-1$.

In order to close the induction, we have to justify that \eqref{eq-induction} also holds for $j=n+1$. 
To do so, we note that the bound on a single time step given in Lemma \ref{lemma-stability-naive}, the local error bound of Lemma \ref{lemma-localerror-small} and the bound \eqref{eq-bounde-temp} for $j=n-1$, allow us to estimate\footnote{Note that this estimate is not a proof of \eqref{eq-bounde-temp} for $j=n$, because then the constants would explode. Instead, it is only used to justify \eqref{eq-induction} for $j=n+1$.}
\[
\normv{(e_{n+1},\dot{e}_{n+1})}_1 \le C_M \normv{(e_{n},\dot{e}_{n})}_1 + \normv{(d_{n},\dot{d}_{n})}_1 \le C_{M,t_{n+1}} \tau^2. 
\]
This implies $\norm{u(\cdot,t_{n+1})-u_{n+1}}_2 \le C_{M,t_{n+1}} \tau^2$, and hence \eqref{eq-induction-bound} also holds for $j=n+1$ by assumption \eqref{eq-regularity} provided that $\tau$ is sufficiently small.
It also implies $\norm{u(\cdot,t_{n+1})-u_{n+1} }_{L^\infty} \le C_{M,t_{n+1}} \tau^2$, 
and hence, again for sufficiently small $\tau$,  
\[
\Vert \kappa \, a(u(\cdot,t_{n+1})) - \kappa\, a(u_{n+1}) \Vert_{L^\infty} \le \tfrac12 \de.
\]
By assumptions \eqref{eq-delta} and \eqref{eq-A0}, this shows that \eqref{eq-induction-de-phi} also holds for $j=n+1$. This closes the induction and concludes the proof of Theorem \ref{thm-large}.

\section{Proof of the error bound for the full discretization}\label{sec-error-full}

In this section, we study fully discrete methods \eqref{eq-method-full}. We first give the proof of Theorem \ref{thm-full} on their global error. The structure of this proof is the same as for the semi-discretization in time in Section \ref{sec-error}. We study stability in Section \ref{subsec-stability-full} below, control Sobolev norms with the energy in Section \ref{subsec-sobolevnorm-full}, estimate the local error Section \ref{subsec-locerror-full} and put everything together in Section \ref{subsec-proof-full}. All arguments are extensions to the fully discrete setting of the arguments of Section \ref{sec-error}, which illustrates the importance of proving such semi-discrete error bounds first.

Throughout, we use, for $s\ge s'\ge 0$, the approximation property
\begin{equation}\label{eq-Pc-error}
\normbig{v-\mathcal{P}^K(v)}_{s'} \le K^{-(s-s')} \norm{v}_{s} \myfor v\in H^{s}
\end{equation}
of the $L^2$-orthogonal projection $\Pc^K$ of \eqref{eq-Pc}, and its stability 
\begin{equation}\label{eq-Pc-stability}
\normbig{\mathcal{P}^K(v)}_{s} \le \norm{v}_{s} \myfor v\in H^{s}.
\end{equation}
 In addition, we use, for $s\ge s'\ge 0$ with $s-s'>\frac12$, the approximation property
\begin{equation}\label{eq-Ic-error}
\normbig{v-\mathcal{I}^K(v)}_{s'} \le   C_{s,s'}   K^{-(s-s')} \norm{v}_{s} \myfor v\in H^{s}
\end{equation}
of the trigonometric interpolation $\mathcal{I}^K$, and its stability
\begin{equation}\label{eq-Ic-stability}
\normbig{\mathcal{I}^K(v)}_{s} \le   C_s   \norm{v}_{s} \myfor v\in H^{s}.
\end{equation}

We emphasize that all estimates in the following are uniform in the spatial discretization parameter $K$.

\subsection{Stability of the numerical method}\label{subsec-stability-full}

Our aim is to show that the stability estimates of Section \ref{subsec-stability-small} carry over to the fully discrete situation. 

Starting with the definition of the energy $\Ec$ of \eqref{eq-energy}, we define its fully discrete version 
\begin{equation}\label{emod-full}
\Ec^K(e, \dot e,u) = \normv{(e,\dot{e})}_1^2 + \kappa \,\Uc^K \kla{\Phi e,\Phi u}
\end{equation}
with
\begin{equation}\label{Uc-full}
\Uc^K \kla{ e, u}
= \sklabig{ \cos(\tau\Om) \partial_{x}^2 e,  \mathcal{P}^K \klabig{a^K(u) \partial_{x}^2 e} }_0 - \tfrac{1}{4}\tau^2 \kappa \normbig{ \Psi_1 \mathcal{P}^K \klabig{a^K(u) \partial_{x}^2 e} }_1^2.
\end{equation}
The difference compared to the $\Ec$ of \eqref{eq-energy} are the additional projections $\mathcal{P}^K$ and the functions $a^K=\mathcal{I}^K\circ a$ instead of $a$. 

The computation of Lemma \ref{lemma-energy} directly transfer to the new energy \eqref{emod-full} of the fully discrete setting if we use that $\Psi_1$ and $\mathcal{P}^K$ commute and if we replace the function $f$ by $\mathcal{P}^K\circ f^K$, where $f^K$ is defined in \eqref{fK}. 

In order to transfer the bound of the remainder term of Lemma \ref{lemma-remainder-energyest} to the fully discrete setting, we use the bounds \eqref{eq-Pc-stability} and \eqref{eq-Ic-stability}  on $\Pc^K$ and $\mathcal{I}^K$ (to estimate $a^K=\mathcal{I}^K\circ a$, which appears in $f^K$) and in addition the property
\begin{equation}\label{eq-property-Pc}
\sklabig{ v^K , \mathcal{P}^K(w) }_s = \sklabig{ v^K , w }_s \qquad\text{for}\qquad v^K\in\mathcal{V}^K, \, w \in H^s
\end{equation}
with $s=1$.
This property is needed for the symmetry argument  and the partial integrations in  the proof of Lemma \ref{lemma-remainder-energyest}.\footnote{It is not immediately clear, whether these steps can also be done if the (traditional) trigonometric interpolation is used instead of the projection $\mathcal{P}^K$ to define $\widehat{f}^K$  in the fully discrete method.}

From the fully discrete versions of Lemmas \ref{lemma-energy} and \ref{lemma-remainder-energyest}, we finally get the stability estimate of Proposition \ref{prop-stability} also in the fully discrete setting for $\Ec^K$.

\subsection{Controlling Sobolev norms with the energy}\label{subsec-sobolevnorm-full}

We show that the bounds on the energy $\Ec$ of Proposition \ref{lowerBoundU} carry over to the fully discrete setting and the corresponding energy $\Ec^K$ of \eqref{emod-full}. 

For the upper bound in \eqref{modSob}, we proceed as in the proof of Proposition \ref{lowerBoundU} and use in addition \eqref{eq-Pc-stability} and \eqref{eq-Ic-stability} to deal with the additional $\mathcal{P}^K$ and $\mathcal{I}^K$ (in $a^K=\mathcal{I}^K\circ a$). 

For the lower bound in \eqref{modSob}, we use that we have by \eqref{eq-assumfilter-psi1phi}, \eqref{eq-Pc-stability} and \eqref{eq-property-Pc} with $s=0$ 
\begin{align*}
\kappa \, \Uc^K \kla{ \Phi e, \Phi u} &\ge \kappa \sklabig{ \cos(\tau\Om) \partial_{x}^2 \Phi e,  a^K(\Phi u) \partial_{x}^2 \Phi e }_0 - \tfrac{1}{4}\tau^2 \kappa^2 \normbig{ \Psi_1 \klabig{a^K(\Phi u) \partial_{x}^2 \Phi e} }_1^2\\
 &= \sklabig{\mathcal{L}^K(\Phi u) \partial_x^2 e , \partial_x^2 e }_0
\end{align*}
for a trigonometric polynomial $e$
(note that, in comparison to \eqref{Uc-full}, the projections $\mathcal{P}^K$ are absent on the right) with 
\[
\mathcal{L}^K(u) = \ka \Phi a^K(u)\cos(\tau\Om) \Phi - \tfrac{1}{4} \kappa^2 \Phi a^K(u) \sin^2(\tau\Om) \Phi^2 a^K(u) \Phi. 
\]
The operator $\mathcal{L}^K$ is the same as the operator $\mathcal{L}$ of Section \ref{subsec-sobolevnorm-full}, except that $a$ is replaced by $a^K=\mathcal{I}^K\circ a$. To obtain the lower bound in \eqref{modSob}, we can thus proceed exactly as in the proof of Proposition \ref{lowerBoundU} if we replace $a$ by $a^K$ in the statement of this proposition and restrict to trigonometric polynomials $e$.

\subsection{Local error bound}\label{subsec-locerror-full}

The main difference compared to the semi-discrete setting arises in the local error bound of Section \ref{subsec-locerror}, which now has to take also the spatial error into account. We denote here and in the following by
\begin{equation}\label{eq-uK}
u^K(\cdot,t) = \Pc^K\klabig{u(\cdot,t)}, \qquad \partial_t u^K(\cdot,t) = \Pc^K\klabig{\partial_t u(\cdot,t)}
\end{equation}
the $L^2$-orthogonal projection of the exact solution onto $\mathcal{V}^K$.

\begin{lemma}[Local error bound in $H^2\times H^1$]\label{lemma-localerror-full}
Let $s\ge 0$, and let the filter functions satisfy Assumptions \ref{assum_slw_psibds} and \ref{assum_qlw_psi}. If $(u,\partial_t u)$ is a solution to \eqref{eq-qlw} in $H^{5+s}\times H^{4+s}$ with
\[
\normvbig{\klabig{u(\cdot,t),\partial_t u(\cdot,t)}}_{4+s}\le M \myfor t_n\le t\le t_{n+1},
\]
then we have 
\[
\normvbig{\klabig{d_{n+1}^K,\dot{d}_{n+1}^K}}_{1}
 \le C_M \tau^3 \abs{\ka} + C_M \tau K^{-2-s} \abs{\ka}
\]
for the fully discrete local error
\[
\klabig{d_{n+1}^K,\dot{d}_{n+1}^K}=\ph_\tau^K\klabig{u^K(\cdot,t_n),\partial_t u^K(\cdot,t_n)}
 - \klabig{u^K(\cdot,t_{n+1}),\partial_t u^K(\cdot,t_{n+1})}
\]
with the fully discrete numerical flow $\ph_{\tau}^K$.
\end{lemma}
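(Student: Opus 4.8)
The plan is to reduce the fully discrete local error to the already-established semi-discrete local error of Lemma~\ref{lemma-localerror-small} plus a purely spatial contribution, and to extract from the latter a factor $\tau$ so that the bound survives the later global accumulation without any coupling between $\tau$ and $K$. Write $p=u(\cdot,t_n)$, $\dot p=\partial_t u(\cdot,t_n)$ and $p^K=\Pc^K p$, $\dot p^K=\Pc^K\dot p$, so that $(u^K(\cdot,t_n),\partial_t u^K(\cdot,t_n))=(p^K,\dot p^K)$. Inserting $\Pc^K\ph_\tau(p,\dot p)$, I decompose
\[
\klabig{d_{n+1}^K,\dot d_{n+1}^K} = \underbrace{\ph_\tau^K(p^K,\dot p^K)-\Pc^K\ph_\tau(p,\dot p)}_{(\mathrm{I})} + \underbrace{\Pc^K\klabig{\ph_\tau(p,\dot p)-(u(\cdot,t_{n+1}),\partial_t u(\cdot,t_{n+1}))}}_{(\mathrm{II})}.
\]
Since $\Pc^K(u(\cdot,t_{n+1}),\partial_t u(\cdot,t_{n+1}))=(u^K(\cdot,t_{n+1}),\partial_t u^K(\cdot,t_{n+1}))$, the term $(\mathrm{II})$ is $\Pc^K$ applied to the semi-discrete local error of \eqref{eq-localerror}, and the stability \eqref{eq-Pc-stability} together with Lemma~\ref{lemma-localerror-small} gives $\normv{(\mathrm{II})}_1\le C_M\tau^3\abs{\ka}$ at once.

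The crux is the spatial term $(\mathrm{I})$. Because the linear part of the one-step map \eqref{eq-method-onestep} is a matrix of Fourier multipliers, it commutes with $\Pc^K$, and since the input $(p^K,\dot p^K)$ already lies in the range of $\Pc^K$, the linear parts of $\ph_\tau^K(p^K,\dot p^K)$ and $\Pc^K\ph_\tau(p,\dot p)$ coincide. Hence $(\mathrm{I})$ is a difference of the nonlinear increments only:
\begin{align*}
(\mathrm{I})_u &= \tfrac12\tau^2\sinc(\tau\Om)\ka\,\klabig{\widehat f^K(p^K)-\Pc^K\widehat f(p)},\\
(\mathrm{I})_{\dot u} &= \tfrac12\tau\cos(\tau\Om)\ka\,\klabig{\widehat f^K(p^K)-\Pc^K\widehat f(p)} + \tfrac12\tau\ka\,\klabig{\widehat f^K(u_{n+1}^K)-\Pc^K\widehat f(u_{n+1})},
\end{align*}
where $u_{n+1}^K$ and $u_{n+1}$ are the first components of $\ph_\tau^K(p^K,\dot p^K)$ and $\ph_\tau(p,\dot p)$. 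Using that $\Psi_1=\sinc(\tau\Om)\Phi$ and $\Pc^K$ are Fourier multipliers of norm $\le 1$ (by \eqref{eq-assumfilter-psi1phi} and \eqref{eq-assumfilter-bounds}), every such increment satisfies
\[
\norm{\widehat f^K(v^K)-\Pc^K\widehat f(v)}_1 \le \norm{f^K(\Phi v^K)-f(\Phi v)}_1 ,
\]
so the whole estimate reduces to controlling $f^K(\Phi\,\cdot)-f(\Phi\,\cdot)$ in $H^1$.

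For this reduced difference I split $f^K(\Phi v^K)-f(\Phi v)=[a^K-a](\Phi v^K)\,\partial_x^2\Phi v^K + [f(\Phi v^K)-f(\Phi v)]$. The interpolation term is bounded in $H^1$ by $C_M K^{-(4+s)}$ using \eqref{eq-Ic-error}, \eqref{eq-algebra} and $\norm{a(\Phi v^K)}_{5+s}\le C_M$; the second is bounded by the Lipschitz property \eqref{eq-lipschitz} together with the projection error \eqref{eq-Pc-error} by $C_M\norm{v^K-v}_3$. For the increment at $(p^K,p)$ this gives $\norm{f^K(\Phi p^K)-f(\Phi p)}_1\le C_M K^{-(2+s)}$, using $\norm{p}_{5+s}\le M$. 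For the increment at $(u_{n+1}^K,u_{n+1})$ I additionally need $\norm{u_{n+1}^K-u_{n+1}}_3$: writing $u_{n+1}^K-u_{n+1}=(u_{n+1}^K-\Pc^K u_{n+1})+(\Pc^K u_{n+1}-u_{n+1})$, the second piece is $\le C_M K^{-(2+s)}$ by \eqref{eq-Pc-error} and the bound $\norm{u_{n+1}}_{5+s}\le C_M$ from Lemma~\ref{lemma-singlestep}, while the first piece is exactly $(\mathrm{I})_u$ measured in $H^3$, controlled by $\tfrac12\abs{\ka}\norm{f^K(\Phi p^K)-f(\Phi p)}_1\le C_M\abs{\ka}K^{-(2+s)}$ after spending both smoothing factors $\tau\sinc(\tau\Om)$ on the two derivatives $H^1\to H^3$. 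This step needs a single-step regularity bound $\norm{u_{n+1}^K}_{5+s}\le C_M$ uniform in $K$, which follows exactly as Lemma~\ref{lemma-singlestep} using in addition the stabilities \eqref{eq-Pc-stability} and \eqref{eq-Ic-stability}. Consequently $\normv{(\mathrm{I})_{\dot u}}_1\le C_M\tau\abs{\ka}K^{-(2+s)}$, the explicit $\tau$ coming from the prefactor of the nonlinear increment.

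The main obstacle — and the step that makes the bound hold without a CFL condition — is the first component $(\mathrm{I})_u$, whose prefactor $\tfrac12\tau^2\sinc(\tau\Om)$ leaves no explicit $\tau$ once derivatives are absorbed. Spending both factors of $\tau\sinc(\tau\Om)$ on regularity would yield $\norm{(\mathrm{I})_u}_2\le C_M\abs{\ka}K^{-(3+s)}$ with no $\tau$, which does not survive the accumulation of $T/\tau$ steps when $\tau\ll K^{-1}$. Instead, I trade only one factor $\tau\sinc(\tau\Om)\colon H^1\to H^2$ for a derivative (via \eqref{eq-proofsinglestep-aux}) and keep the remaining $\tau\sinc(\tau\Om)=\tau\cdot\sinc(\tau\Om)$ as the scalar $\tau$ times the bounded multiplier $\sinc(\tau\Om)$, which gives
\[
\norm{(\mathrm{I})_u}_2 \le \tfrac12\abs{\ka}\tau\,\norm{f^K(\Phi p^K)-f(\Phi p)}_1 \le C_M\tau\abs{\ka}K^{-(2+s)} .
\]
Combining with $(\mathrm{I})_{\dot u}$ yields $\normv{(\mathrm{I})}_1\le C_M\tau\abs{\ka}K^{-(2+s)}$, and adding the bound on $(\mathrm{II})$ completes the proof. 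The extension to $g\neq0$ is routine: the interpolation and Lipschitz estimates apply verbatim to $g^K=\mathcal{I}^K\circ g$, which is one degree more regular than the quasilinear term and therefore contributes only a harmless term of order $K^{-(3+s)}\le K^{-(2+s)}$.
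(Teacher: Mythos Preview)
Your proof is correct, and it takes a genuinely different route from the paper. The paper reproduces the variation-of-constants argument of Lemma~\ref{lemma-localerror-small} directly in the fully discrete setting: it writes the local error against the projected exact solution using $\tilde f^K=\Pc^K\circ f$, obtaining an analog of the decomposition \eqref{eq-local} with additional terms such as $\widehat f^K(u_0^K)-\widehat f^K(u_0)$ and $\Pc^K\circ(f^K-f)$, and then re-estimates each line as in parts (a)--(c) of the semi-discrete proof, invoking \eqref{eq-Pc-error}--\eqref{eq-Ic-stability} where needed. You instead split the local error into the projected semi-discrete local error~$(\mathrm{II})$, handled by Lemma~\ref{lemma-localerror-small} as a black box, plus the purely spatial discrepancy~$(\mathrm{I})$ between the two numerical flows, which you control by comparing the nonlinear increments. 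Your approach is more modular and avoids re-running the Peano-kernel argument, at the price of a small bootstrap to handle $\norm{u_{n+1}^K-u_{n+1}}_3$; the paper's approach is more self-contained and keeps the provenance of every term transparent. Both end up needing the same key device of spending only one of the two $\tau\sinc(\tau\Om)$ factors on regularity so that an explicit $\tau$ survives in front of $K^{-2-s}$.
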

\begin{proof}
The proof is similar to the one of Lemma \ref{lemma-localerror-small}. We restrict again to the case $n=0$. Writing ${\tilde f}^K=\Pc^K\circ f$, we start from the fully discrete analog
\begin{align*}
&\begin{pmatrix} u_1^K-u^K(\cdot,\tau) \\ \dot{u}_1^K-\partial_t u^K(\cdot,\tau) \end{pmatrix}
= \tfrac12 \tau \ka R(\tau) \begin{pmatrix} 0\\ \widehat{f}^K (u_0)- {\tilde f}^K (u_0) \end{pmatrix}
+ \tfrac12 \tau \ka         \begin{pmatrix} 0\\  \widehat{f}^K (u_1^K)- {\tilde f}^K (u_1^K) \end{pmatrix}\\
 &\quad + \tfrac12 \tau \ka R(\tau) \begin{pmatrix} 0\\ \widehat{f}^K (u_0^K)- \widehat{f}^K (u_0) \end{pmatrix} + \tfrac12 \tau \ka \begin{pmatrix} 0\\  {\tilde f}^K (u_1^K)-{\tilde f}^K (u(\cdot,\tau)) \end{pmatrix}\\
 &\quad + \tfrac12 \tau \ka \klabigg{ R(\tau) \begin{pmatrix} 0\\  {\tilde f}^K (u_0) \end{pmatrix} + R(0) \begin{pmatrix} 0\\  {\tilde f}^K  (u(\cdot,\tau)) \end{pmatrix} }
- \ka \int_0^\tau R(\tau-t) \begin{pmatrix} 0\\  {\tilde f}^K (u(\cdot,t)) \end{pmatrix}\,\drm t
\end{align*}
of the local error representation \eqref{eq-local}. In the derivation of this representation, we have used that $\Pc^K$ and the four components of $R$ commute. The contributions to the local error can be estimated similarly as in the proof of Lemma \ref{lemma-localerror-small} using in addition the properties  \eqref{eq-Pc-error}--\eqref{eq-Ic-stability} of $\Pc^K$ and $\mathcal{I}^K$  and the assumed regularity of the exact solution.

(a) In the terms of the first line, we decompose $\widehat{f}^K - \tilde{f}^K = (\widehat{f}^K -\Pc^K\circ f^K) + \Pc^K\circ(f^K-f)$. For the terms with $\widehat{f}^K -\Pc^K\circ f^K$ (which correspond to \eqref{eq-local1}), we get an estimate $C_M \tau^3 \abs{\kappa}$ in $H^2\times H^1$ and $C_M \tau^2 \abs{\kappa}$ in $H^3\times H^2$ as in the proof of Lemma \ref{lemma-localerror-small}. For the terms with $\Pc^K\circ(f^K-f)$, we use in particular \eqref{eq-Ic-error} in addition to the arguments of the proof of Lemma \ref{lemma-localerror-small} to get an estimate $C_M \tau K^{-4-s} \abs{\kappa}$ in $H^2\times H^1$ and $C_M \tau K^{-3-s} \abs{\kappa}$ in $H^3\times H^2$.

(b) For the terms in the third line (which correspond to \eqref{eq-local2}), we get as in the proof of Lemma \ref{lemma-localerror-small}  an estimate $C_M \tau^3 \abs{\kappa}$ in $H^2\times H^1$ and $C_M \tau^2 \abs{\kappa}$ in $H^3\times H^2$.

(c) For the new first term in the second line, we get  an estimate $C_M \tau K^{-2-s} \abs{\kappa}$ in $H^2\times H^1$ and, using the properties \eqref{eq-assumfilter-bounds} and \eqref{eq-assumfilter-psi1phi} of the filters as in \eqref{eq-est-aux}, an estimate $C_M K^{-2-s} \abs{\kappa}$ in $H^3\times H^2$. The second term in the second line corresponds to \eqref{eq-local3} and can then be dealt with as in the proof of Lemma \ref{lemma-localerror-small}.
\end{proof}

Based on Lemma \ref{lemma-localerror-full}, we can then prove a corresponding local error bound in the energy as in Proposition \ref{prop-localerror}.

\subsection{Proof of Theorem \ref{thm-full}}\label{subsec-proof-full}

The error accumulation is done as in Section \ref{subsec-accumulation-small}, but with the exact solution replaced by its projection \eqref{eq-uK}. Note that we need \eqref{eq-induction-de-phi} for $a^K=\mathcal{I}^K\circ a$ instead of $a$; we use \eqref{eq-Pc-error} and \eqref{eq-Ic-error} to deal with that. This gives a the claimed global error bound of Theorem \ref{thm-full}, but with $u$ replaced by $u^K=\mathcal{P}^K\circ u$ in the error estimate. We then use once more \eqref{eq-Pc-error} to get the precise error estimate of Theorem \ref{thm-full}.

\begin{appendix}

\section{Semiclassical pseudodifferential calculus}\label{appendix}
 
In this section we shall recall the basic results about pseudodifferential calculus that were needed in our proof. The presentation follows closely \cite[Section 8]{HKR}.
We shall use the Fourier transform on the torus  defined by
\[
\hat{u}_j= (\mathcal{F}_{x}u)(j) = \frac{1}{2\pi} \int_{-\pi}^{\pi} u(x) \e^{-\iu jx} \, \drm x.
\]

We consider symbols $a(x, \xi)$  (not to be confused with the function $a$ in \eqref{eq-qlw})  defined on  $\mathbb{T} \times \mathbb{R} $ that are continuous in $\xi$, and we use the quantization
\begin{equation}\label{eq-Op}
(\Op_{a} u)(x)= \sum_{j \in \mathbb{Z}} a(x, j) \hat{u}_j \e^{\iu j x}.
\end{equation}
We introduce for $\si\ge 0$ the following seminorms of symbols:
\begin{align*}
   &  |a|_{\si,0}=   \sup_{| \alpha | \leq  \si } \bigl\| \mathcal{F}_{x}(\partial_{x}^\alpha a) \bigr\|_{L^2_{ j}(\mathbb{Z}, L^\infty_{\xi}(\R))}, \qquad  |a|_{\si,1}=  \sup_{| \alpha | \leq  \si } \bigl\|   \mathcal{F}_{x} (\partial_{x}^\alpha \partial_{\xi} a)\bigr\|_{L^2_{ j}(\mathbb{Z},L^\infty_{\xi}(\R))}.
\end{align*}
Note that
\[
\normbig{ \mathcal{F}_{x}(\partial_{x}^\alpha a) }_{L^2_{ j}(\mathbb{Z}, L^\infty_{\xi}(\R))}^2 = \sum_{j\in\Z} \normbig{ (\iu j)^\alpha \hat{a}_j }_{L^\infty(\R)}^2
\]
with the Fourier coefficients
\begin{equation}\label{eq-fourier-a}
\hat{a}_j(\xi)=(\mathcal{F}_{x}a)(j,\xi), \qquad j\in\Z, \, \xi\in\R,
\end{equation}
of $a$, and similarly
\[
\normbig{ \mathcal{F}_{x}(\partial_{x}^\alpha \partial_\xi a) }_{L^2_{ j}(\mathbb{Z}, L^\infty_{\xi}(\R))}^2 = \sum_{j\in\Z} \normbig{ (\iu j)^\alpha \hat{a}_j' }_{L^\infty(\R)}^2
\]
with $\hat{a}_j'=\frac{\drm}{\drm\xi}\hat{a}_j=\widehat{(\partial_\xi a)}_j=(\mathcal{F}_{x}\partial_\xi a)(j,\xi)$.
We shall say that $a\in S_{\si,0}$ if $|a|_{\si,0}<\infty$ and $a \in S_{\si,1}$ if  $ |a|_{\si,1}<\infty.$
The use of these seminorms compared to some more classical ones allows us to avoid to lose too many derivatives
while keeping very simple proofs.
Note that we can  easily relate  $|a|_{\si,0}$ to  more classical symbol  seminorms up to losing more derivatives.
For example, we have for every $\si \geq 0$
\[
\sup_{| \alpha | \leq  \si } \sup_{x\in\mathbb{T},  \,  \xi\in\R} |\partial_x^\alpha a(x, \xi)| \le C | a|_{\si+ s, 0}
\]
with $s>\frac12$.    The lower bound of $\frac12$ is related to the Sobolev embedding $H^s\hookrightarrow L^\infty$ in $1d$ and should be generalized to $\frac{d}{2}$ for higher dimensional generalizations of our arguments here.  

Writing a symbol $a(x,\xi)$ as a Fourier series with respect to its first variable $x$,
\[
a(x,\xi) = \sum_{j\in\Z} \hat{a}_j(\xi) \e^{\iu jx}
\]
with Fourier coefficients \eqref{eq-fourier-a}, the quantization \eqref{eq-Op} takes (formally) the form
\begin{equation}\label{eq-Op-fourier}
(\Op_{a} u) (x) = \sum_{l\in\Z} \klabigg{\sum_{k\in\Z} \hat{a}_{l-k}(k) \hat{u}_k} \e^{\iu lx}.
\end{equation}
Its $L^2(\mathbb{T})$-norm (which we denote in this appendix by $\norm{\cdot}_{L^2(\mathbb{T})}$ to avoid confusion with the other norms appearing here) is given by
\[
\norm{\Op_{a} u}_{L^2(\mathbb{T})}^2 = \sum_{l\in\Z} \absbigg{ \sum_{k\in\Z} \hat{a}_{l-k}(k) \hat{u}_k}^2 \le \sum_{l\in\Z} \absbigg{ \sum_{k\in\Z} \norm{\hat{a}_{l-k}}_{L^\infty(\R)}  \abs{\hat{u}_k} }^2.
\]
Noting that the upper bound on the right is the squared $L^2(\mathbb{T})$-norm of the product of the functions with Fourier coefficients $\norm{\hat{a}_{k}}_{L^\infty(\R)}$ and $\abs{\hat{u}_k}$, we get from \eqref{eq-algebra1} the following $L^2$ continuity result.

     \begin{proposition}
     \label{propcontinu}
      Assume that $\si> \frac12$. 
      Then, there exists $C>0$ such that for every  $a \in S_{\si, 0}$ and for every $u \in L^2(\mathbb{T})$, we have $\Op_{a} u\in L^2(\mathbb{T})$ with
\[
\|\Op_{a} u\|_{L^2(\mathbb{T})} \leq C |a|_{\si,0} \|u\|_{L^2( \mathbb{T})}.
\]
     \end{proposition}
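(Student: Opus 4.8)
The reduction carried out just before the statement already does most of the work: writing $\Op_a u$ in the Fourier form \eqref{eq-Op-fourier} and estimating coefficientwise leaves us with the single bound
\[
\norm{\Op_a u}_{L^2(\mathbb{T})}^2 \le \sum_{l\in\Z}\absBig{\sum_{k\in\Z}\norm{\hat{a}_{l-k}}_{L^\infty(\R)}\abs{\hat{u}_k}}^2,
\]
so it remains only to control the right-hand side by $C^2\,|a|_{\si,0}^2\,\norm{u}_{L^2(\mathbb{T})}^2$. The plan is to read this double sum as the squared $L^2(\mathbb{T})$-norm of a product of two periodic functions and then quote the algebra estimate \eqref{eq-algebra1}.

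To this end I would introduce the auxiliary functions on $\mathbb{T}$ with nonnegative Fourier coefficients,
\[
A(x)=\sum_{k\in\Z}\norm{\hat{a}_k}_{L^\infty(\R)}\,\e^{\iu kx},\qquad U(x)=\sum_{k\in\Z}\abs{\hat{u}_k}\,\e^{\iu kx}.
\]
The $l$th Fourier coefficient of the product $AU$ is exactly the convolution $\sum_k\norm{\hat{a}_{l-k}}_{L^\infty(\R)}\abs{\hat{u}_k}$, so the right-hand side above equals $\norm{AU}_0^2$. Since passing to absolute values of Fourier coefficients preserves the $\ell^2$-norm we have $\norm{U}_0=\norm{u}_{L^2(\mathbb{T})}$, while $\norm{A}_\si^2=\sum_k\skla{k}^{2\si}\norm{\hat{a}_k}_{L^\infty(\R)}^2$ is finite because $a\in S_{\si,0}$. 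Applying \eqref{eq-algebra1} with $s=\si>\tfrac12$, placing $U$ in the $H^0$-slot and $A$ in the $H^\si$-slot, then gives $\norm{AU}_0\le C_\si\norm{U}_0\norm{A}_\si=C_\si\norm{u}_{L^2(\mathbb{T})}\norm{A}_\si$.

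The proof is finished once $\norm{A}_\si$ is bounded by $|a|_{\si,0}$, and I expect this last comparison to be the only delicate point. Here the identity recorded before the proposition yields $|a|_{\si,0}^2=\sup_{\abs{\alpha}\le\si}\sum_k\abs{k}^{2\alpha}\norm{\hat{a}_k}_{L^\infty(\R)}^2$; for the integer values of $\si$ relevant in the applications (in the proof one only ever takes $\si=1$) the weight $\skla{k}^{2\si}=(1+k^2)^\si$ is a finite nonnegative-coefficient combination of the powers $\abs{k}^{2\alpha}$, $0\le\alpha\le\si$, each dominated by $|a|_{\si,0}^2$, so that $\norm{A}_\si\le C_\si\,|a|_{\si,0}$. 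Taking square roots in the resulting chain of inequalities produces the asserted bound with $C$ depending only on $\si$. This comparison is exactly the place where the hypothesis $\si>\tfrac12$ enters, through the Sobolev embedding $H^\si\hookrightarrow L^\infty$ underlying \eqref{eq-algebra1}, and where the integer-order seminorm $|a|_{\si,0}$ must be matched to the weight $\skla{k}^{2\si}$ defining $\norm{A}_\si$.
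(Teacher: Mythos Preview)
Your proof is correct and follows exactly the route the paper takes: recognize the bound as $\norm{AU}_0$ for the auxiliary functions $A$ and $U$ with nonnegative Fourier coefficients, then apply the product estimate \eqref{eq-algebra1}. You are in fact more careful than the paper, which glosses over the comparison of $\norm{A}_\si$ with $|a|_{\si,0}$; your observation that this matching is immediate for integer $\si$ (and that only $\si=1$ is actually used) is the right way to close that small gap.
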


For a very similar result, we refer to \cite[Proposition 8.1]{HKR} which slightly refines in terms of the regularity of the symbols, the classical results of $L^2$
continuity for symbols in $S^0_{0,0}$ that are compactly supported in $x$, see for example \cite{Taylor}. 

    We shall now state results of symbolic calculus, see also \cite[Proposition 8.2]{HKR}.
  \begin{proposition}\label{propcalculus}
Assume that $\si>\frac12$. Then, there exists $C>0$ such that for every $a \in S_{\si+1, 1}$,  we  have
    \begin{equation}
    \label{adj} \bigl\| (\Op_{a})^* (u) - \Op_{\overline{a}} (u)\bigr\|_{L^2(\mathbb{T})} \leq C |a|_{\si+1, 1} |u|_{L^2(\mathbb{T})}, 
     \end{equation}
     where $ (\Op_{a})^*$ is the adjoint of the operator $\Op_{a}$ for the $L^2(\mathbb{T})$ scalar product.
Moreover, we have for every $ a \in S_{\si,1}$ and $b\in S_{\si+1, 0}$ that $ab\in S_{\si,0}$ and 
   \begin{equation}
   \label{comp}
    \bigl\|\Op_{a} \Op_{b} (u) - \Op_{ab} (u)\bigr\| _{L^2(\mathbb{T})} \leq C |a|_{\si, 1} |b|_{\si+1, 0}\, \|u \|_{L^2( \mathbb{T})}.
    \end{equation}
  \end{proposition}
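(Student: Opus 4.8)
The plan is to compute the matrices of all the operators involved in the Fourier basis, using the representation \eqref{eq-Op-fourier}, and to reduce each of the two estimates to the $L^2(\mathbb{T})$-continuity mechanism already used in the proof of Proposition \ref{propcontinu}. Writing the $x$-Fourier coefficients $\hat{a}_j$ as in \eqref{eq-fourier-a} and $\hat{a}_j'=(\mathcal{F}_x\partial_\xi a)(j,\cdot)$, the operator $\Op_a$ has Fourier matrix $(\Op_a)_{l,k}=\hat{a}_{l-k}(k)$, i.e.\ $(\Op_a u)^\wedge_l=\sum_k\hat{a}_{l-k}(k)\hat{u}_k$.

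For the adjoint bound \eqref{adj}, I would first read off from the $L^2(\mathbb{T})$ scalar product that $((\Op_a)^*)_{l,k}=\overline{\hat{a}_{k-l}(l)}$, whereas $(\Op_{\overline{a}})_{l,k}=\overline{\hat{a}_{k-l}(k)}$, using $\widehat{(\overline{a})}_j=\overline{\hat{a}_{-j}}$. Hence the matrix of $(\Op_a)^*-\Op_{\overline{a}}$ at $(l,k)$ equals $\overline{\hat{a}_{k-l}(l)-\hat{a}_{k-l}(k)}$. The key step is to write this difference as $\int_k^l\hat{a}_{k-l}'(\xi)\,\drm\xi$ by the fundamental theorem of calculus, so that it is bounded by $\abs{l-k}\,\norm{\hat{a}_{k-l}'}_{L^\infty(\R)}$. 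The resulting entrywise bound depends only on $l-k$, so in absolute value the difference operator is dominated by multiplication by the function $E$ whose (nonnegative) Fourier coefficients are $\abs{m}\,\norm{\hat{a}_m'}_{L^\infty(\R)}$. Applying the first estimate in \eqref{eq-algebra1} exactly as in Proposition \ref{propcontinu} then gives $\norm{((\Op_a)^*-\Op_{\overline{a}})u}_{L^2(\mathbb{T})}\le C\norm{E}_\si\norm{u}_{L^2(\mathbb{T})}$, and the extra factor $\abs{m}$ is absorbed by raising the seminorm order by one, so that $\norm{E}_\si\le C\abs{a}_{\si+1,1}$.

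For the composition \eqref{comp}, iterating \eqref{eq-Op-fourier} gives $(\Op_a\Op_b)_{l,k}=\sum_j\hat{a}_{l-j}(j)\hat{b}_{j-k}(k)$, while $(\Op_{ab})_{l,k}=\sum_j\hat{a}_{l-j}(k)\hat{b}_{j-k}(k)$, since the $x$-Fourier coefficients of $ab$ are the convolution $\widehat{(ab)}_m=\sum_p\hat{a}_{m-p}\hat{b}_p$. Subtracting, the $b$-factor is untouched and only the frequency at which $a$ is sampled changes:
\[
(\Op_a\Op_b-\Op_{ab})_{l,k}=\sum_j\bigl(\hat{a}_{l-j}(j)-\hat{a}_{l-j}(k)\bigr)\hat{b}_{j-k}(k).
\]
Setting $p=j-k$ and again using the fundamental theorem of calculus, $\absbig{\hat{a}_{l-j}(j)-\hat{a}_{l-j}(k)}\le\abs{p}\,\norm{\hat{a}_{(l-k)-p}'}_{L^\infty(\R)}$, so each entry is bounded by $\sum_p\norm{\hat{a}_{(l-k)-p}'}_{L^\infty(\R)}\,\bigl(\abs{p}\,\norm{\hat{b}_p}_{L^\infty(\R)}\bigr)$, which depends only on $l-k$ and is precisely the $(l-k)$-th Fourier coefficient of the product of the two functions whose coefficients are $\norm{\hat{a}_m'}_{L^\infty(\R)}$ and $\abs{p}\,\norm{\hat{b}_p}_{L^\infty(\R)}$. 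Bounding the difference operator by this multiplication operator and using \eqref{eq-algebra1} (once for this product, once in the Proposition \ref{propcontinu} step) yields the bound $C\abs{a}_{\si,1}\abs{b}_{\si+1,0}$, the factor $\abs{p}$ being absorbed into one extra order of the $b$-seminorm; the membership $ab\in S_{\si,0}$ follows likewise by applying \eqref{eq-algebra1} to the convolution formula for $\widehat{(ab)}_m$.

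The step I expect to be the main obstacle is making the ``gain of a derivative'' rigorous together with the bookkeeping of seminorm orders: one must check that the factor $\abs{l-k}$ (respectively $\abs{p}$) produced by the mean-value step is absorbed precisely by passing from order $\si$ to order $\si+1$, and that the threshold $\si>\tfrac12$ is exactly the one inherited from the algebra estimate \eqref{eq-algebra1} behind Proposition \ref{propcontinu}. For the composition there is the additional bookkeeping point that, after the mean-value step, the residual double sum must be reorganized into a genuine convolution in $l-k$ so that the multiplication-operator bound applies.
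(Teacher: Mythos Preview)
Your proposal is correct and follows essentially the same approach as the paper: both arguments use the fundamental theorem of calculus in the $\xi$-variable to convert the difference $\hat{a}_{m}(l)-\hat{a}_{m}(k)$ into a factor $|l-k|$ (respectively $|j-k|$) times a sup of $\hat{a}_m'$, absorb this factor as one extra order of the relevant seminorm, and then reduce to the $L^2$-continuity mechanism behind Proposition~\ref{propcontinu} via \eqref{eq-algebra1}. The only cosmetic difference is that the paper packages the remainder as $\Op_d$ (respectively $\Op_f$) for an explicit symbol and then invokes Proposition~\ref{propcontinu}, whereas you bound Fourier matrix entries directly and dominate by a multiplication operator; these are the same computation in different notation.
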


  \begin{proof}
  Let us first  prove \eqref{adj}. We start by computing a symbol $c$ with
\[
\Op_a^*= \Op_c.
\]
Using that, by \eqref{eq-Op-fourier},
\[
\sum_{l\in\Z} \overline{\hat{a}_{l-j}(j)} \hat{u}_l = \sklabig{\Op_a \e^{\iu jx}, u}_{L^2(\mathbb{T})} = \sklabig{\e^{\iu jx},\Op_c u}_{L^2(\mathbb{T})} = 
\sum_{l\in\Z} \hat{c}_{j-l}(l) \hat{u}_l ,
\]
we define such a symbol $c$ by
\[
\hat{c}_j(\xi) = \overline{\hat{a}_{-j}(\xi+j)}, \qquad j\in\Z, \, \xi\in\R.
\]
Assuming that $a \in S_{\si,0}$, we thus also have that $c \in S_{\si,0}$ with $|c|_{\si, 0} = | a|_{\si,0}$.
     Next, by Taylor expansion, we can write
\[
d(x,\xi):= c(x,\xi)- \overline{a(x,\xi)}=  \sum_{j\in\Z} \int_{0}^1 j \, \overline{\hat{a}_{-j}'(\xi+s j)}\, \drm s \,  \e^{\iu j x}.
\]
    We shall prove that $|d|_{\si,0}\leq | a|_{\si+1, 1}$ and the result will follow from Proposition \ref{propcontinu}. This estimate follows from
   \[
(\iu j)^{\alpha} \hat{d}_j(\xi) = (\iu j)^{\alpha} \int_{0}^1 j \, \overline{\hat{a}_{-j}'(\xi+s j)}\, \drm s = -\iu \int_{0}^1 \overline{(- \iu j)^{\alpha+1} \hat{a}_{-j}'(\xi+s j)}\, \drm s,
\]
which implies $\norm{(\iu j)^{\alpha} \hat{d}_j}_{L^\infty(\R)} \le \norm{(- \iu j)^{\alpha+1} \hat{a}_{-j}'}_{L^\infty(\R)}$,
and ends the proof of \eqref{adj}.
   
Let us now prove \eqref{comp}.  We first observe that for $\si>\frac12$ and $a \in S_{\si, 0}$, $b\in S_{\si,0}$, we have by \eqref{eq-algebra1} (applied with functions $u$ and $v$ with Fourier coefficients $\hat{u}_j = \|\hat{a}_j\|_{L^\infty(\R)}$ and $\hat{v}_j = \|\hat{b}_j\|_{L^\infty(\R)}$) that 
\begin{equation}\label{eq-symbol-algebra}
| ab |_{\si,0} \le C |a|_{\si, 0}|b|_{\si, 0}, 
\end{equation}
 and thus that $ ab \in S_{\si,0}$.

Next, we compute a symbol $e$ with
\[
\Op_{a} \Op_{b} = \Op_{e}.
\]
Such a symbol is obtained by writing $\Op_b$ as in \eqref{eq-Op-fourier} and $\Op_a$ as in \eqref{eq-Op}, which yields
\[
e(x, \xi)= \sum_{j \in \mathbb{Z}} a(x, \xi + j) \hat{b}_j(\xi) \e^{\iu j x}.
\]
We then get that
\[
f(x,\xi) := e(x, \xi) - a(x, \xi) b(x, \xi)=  \sum_{j \in \mathbb{Z}} \int_{0}^1 \partial_{\xi} a(x,  \xi+ s j)\, \drm s\, j \, \hat{b}_j(\xi) \e^{\iu j  x} .
\]
We next estimate a suitable seminorm of the symbol $f$ to apply Proposition \ref{propcontinu}.
           By taking the Fourier transform in $x$, we obtain that
\[
 (\iu l)^\alpha \hat{f}_l(\xi) = (\iu l)^\alpha \sum_{k\in \mathbb{Z}} \int_{0}^1\hat{a}_{l-k}'(\xi+ sk)\, \drm s \,k \, \hat{b}_k(\xi).
\]
We thus have for $\alpha\ge 0$
\[
\normbig{ (\iu l)^{\alpha} \hat{f}_l}_{L^\infty(\R)} 
  \le \abs{l}^{\alpha} \sum_{k\in \mathbb{Z}} \normbig{ \hat{a}_{l-k}'}_{L^\infty(\R)} \normbig{ (\iu k) \hat{b}_k}_{L^\infty(\R)} ,
\]
from which we obtain for $\si>\frac12$ by \eqref{eq-algebra1} that
\[
|f|_{\si,0} \le C | a|_{\si, 1} |b|_{\si+1, 0}.
\]
Since by definition of $f$, we have $ \Op_{a} \Op_b - \Op_{ab}= \Op_{f}$, the result follows from Proposition \ref{propcontinu}.
\end{proof}
     
  We shall next  define a semiclassical version of the above calculus which is the one of interest for us.
     For any symbol $a(x, \xi)$ as above,  we set for $0<\tau\le 1$  
\[
a^\tau(x, \xi)= a(x, \tau \xi)
\]
and we define 
      \begin{equation*}
 \Op^{\tau}_{a} u =
  \Op_{a^\tau} u.
 \end{equation*}
For this calculus, we have the following result, see also \cite[Proposition 8.3]{HKR}.

 \begin{proposition}
 \label{propsemi}
Assume that $\si> \frac12$. 
Then, there exists $C>0$ such that for every $0< \tau \le 1$,  we have 
  \begin{itemize}
  \item 
    for every  $a \in S_{\si,0}$
\[
\bigl\|\Op_{a}^{\tau} u\bigr\|_{L^2( \mathbb{T})} \leq C |a|_{\si,0} \|u\|_{L^2( \mathbb{T})}, 
\]
   \item for every $a \in  S_{\si, 1}$ and for every $b \in S_{\si+1, 0}$
\[
\bigl\|\Op_{a}^{\tau} \Op_{b}^{\tau} (u) - \Op_{ab}^{\tau} (u)\bigr\|_{L^2(\mathbb{T})} \leq C \tau  |a|_{\si, 1} |b|_{\si+1, 0}\, \|u \|_{L^2( \mathbb{T})}
\]
  \item for every $a \in S_{\si+1,1}$ 
\[
\bigl\|(\Op_{a}^{\tau})^* (u) - \Op_{\overline{a}}^{\tau} (u)\bigr\|_{L^2(\mathbb{T})} \leq C \tau  |a|_{\si+1, 1}\, \|u \|_{L^2( \mathbb{T})}.
\]
  
  \end{itemize}
 
 \end{proposition}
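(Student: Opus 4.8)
The plan is to reduce all three estimates to the unscaled calculus of Propositions \ref{propcontinu} and \ref{propcalculus} by substituting the rescaled symbols $a^\tau(x,\xi)=a(x,\tau\xi)$ (and $b^\tau$) and tracking how the seminorms $|\cdot|_{\si,0}$ and $|\cdot|_{\si,1}$ transform. The decisive point, which I would establish first, is that the rescaling acts only in $\xi$, so that the $x$-Fourier coefficients satisfy $\widehat{(\partial_x^\alpha a^\tau)}_j(\xi)=(\iu j)^\alpha\hat{a}_j(\tau\xi)$; since $\tau>0$, the substitution $\xi\mapsto\tau\xi$ preserves every $L^\infty_\xi(\R)$-norm, whence $|a^\tau|_{\si,0}=|a|_{\si,0}$. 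Differentiation in $\xi$, on the other hand, produces a chain-rule factor $\partial_\xi a^\tau(x,\xi)=\tau\,(\partial_\xi a)(x,\tau\xi)$, so the same invariance gives $|a^\tau|_{\si,1}=\tau\,|a|_{\si,1}$. This single factor $\tau$ is the sole source of the gains of $\tau$ asserted in the proposition.

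With these two identities in hand, each bullet follows by direct substitution. For the continuity bound I would apply Proposition \ref{propcontinu} to $a^\tau$ and use $\Op_a^\tau=\Op_{a^\tau}$ together with $|a^\tau|_{\si,0}=|a|_{\si,0}$. For the composition and adjoint bounds I would first record the elementary symbol identities $a^\tau b^\tau=(ab)^\tau$ and $\overline{a^\tau}=(\bar a)^\tau$, which are immediate from the definition of $a^\tau$. Applying \eqref{comp} to the pair $(a^\tau,b^\tau)$ and using $\Op_a^\tau\Op_b^\tau=\Op_{a^\tau}\Op_{b^\tau}$ and $\Op_{ab}^\tau=\Op_{(ab)^\tau}=\Op_{a^\tau b^\tau}$ then gives
\[
\|\Op_a^\tau\Op_b^\tau(u)-\Op_{ab}^\tau(u)\|_{L^2(\mathbb{T})}\le C\,|a^\tau|_{\si,1}\,|b^\tau|_{\si+1,0}\,\|u\|_{L^2(\mathbb{T})}=C\tau\,|a|_{\si,1}\,|b|_{\si+1,0}\,\|u\|_{L^2(\mathbb{T})},
\]
the last equality being the two scaling identities. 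Likewise, since taking adjoints commutes with the substitution, $(\Op_a^\tau)^*=(\Op_{a^\tau})^*$ and $\Op_{\bar a}^\tau=\Op_{(\bar a)^\tau}=\Op_{\overline{a^\tau}}$, so \eqref{adj} applied to $a^\tau$ bounds the left-hand side by $C\,|a^\tau|_{\si+1,1}\,\|u\|_{L^2(\mathbb{T})}=C\tau\,|a|_{\si+1,1}\,\|u\|_{L^2(\mathbb{T})}$.

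In short, the proposition is a pure scaling consequence of the already-proven unscaled calculus, and I expect no genuine obstacle. The only subtlety worth flagging is the contrast between the scale-invariant seminorm $|\cdot|_{\si,0}$ and the seminorm $|\cdot|_{\si,1}$, which contracts by exactly $\tau$ because it carries one $\xi$-derivative; keeping track of this factor in each of the three estimates is effectively the whole content of the argument.
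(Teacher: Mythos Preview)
Your proposal is correct and matches the paper's proof exactly: the paper, too, simply invokes Propositions~\ref{propcontinu} and~\ref{propcalculus} applied to the rescaled symbols and notes the scaling identities $|a^\tau|_{\si,0}=|a|_{\si,0}$ and $|a^\tau|_{\si,1}=\tau|a|_{\si,1}$. Your write-up is in fact more detailed than the paper's, which dispatches the argument in one sentence.
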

 
 \begin{proof}
  The results are direct consequences of Propositions \ref{propcontinu} and \ref{propcalculus}
   since for any symbol $a$, we have by definition of $a^\tau$  that $|a^\tau|_{\si, 0}= |a|_{\si,0}$ and $|a^\tau |_{\si, 1}= \tau |a|_{\si,1}$.
 \end{proof}
  
 Let us finally state the semiclassical G{\aa}rding inequality.
  \begin{proposition}
 \label{garding}
Assume that $\si> \frac12$. 
For $a \in S_{\si  +1,0}\cap S_{\si+1,1}$ assume further that there exists $\delta>0$
           such that
\[
a(x, \xi) \geq  \delta \myforall x\in\mathbb{T}, \, \xi\in\R.
\]
           Then, there exists $C>0$ which depends only on $|a|_{\si +1, 0}$, $|a|_{\si+1, 1}$ and $\delta$ such that
\[
\langle \Op_{a}^\tau u, u \rangle_{L^2(\mathbb{T})}
          \geq \tfrac12 \delta \|u\|_{L^2(\mathbb{T})}^2 - C \tau \|u \|_{L^2(\mathbb{T})}^2 \myforall 0<\tau\le 1.
\]
   \end{proposition}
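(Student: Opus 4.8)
The plan is to adapt the standard square-root argument for the G\aa rding inequality to the semiclassical calculus of Proposition~\ref{propsemi}. Since $a(x,\xi)\ge\delta$ pointwise, the shifted symbol $a-\tfrac12\delta$ is bounded below by $\tfrac12\delta>0$, and I set
\[
b(x,\xi)=\sqrt{a(x,\xi)-\tfrac12\delta},
\]
so that $b$ is real-valued, $b\ge\sqrt{\delta/2}>0$, and $b^2=a-\tfrac12\delta$. The whole proof then rests on comparing $\langle\Op_a^\tau u,u\rangle_{L^2(\mathbb{T})}$ with the manifestly nonnegative quantity $\|\Op_b^\tau u\|_{L^2(\mathbb{T})}^2$.

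First I would verify that $b$ inherits the regularity of $a$, namely $b\in S_{\si+1,0}\cap S_{\si+1,1}$ with $|b|_{\si+1,0}$ and $|b|_{\si+1,1}$ bounded in terms of $|a|_{\si+1,0}$, $|a|_{\si+1,1}$ and $\delta$. This is the step that requires care. The function $t\mapsto\sqrt{t-\tfrac12\delta}$ is smooth on the range of $a$; by the embedding recorded in Appendix~\ref{appendix} one has $\sup_{x,\xi}a\le C|a|_{\si+1,0}$ (using $\si>\tfrac12$), so this range lies in the compact interval $[\delta,C|a|_{\si+1,0}]\subset(0,\infty)$, on which all derivatives of the square root are bounded. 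A Moser-type composition estimate in the symbol classes, entirely analogous to \eqref{eq-algebra-plus} and proved by applying the algebra inequality \eqref{eq-algebra1} to the Fourier coefficients that define the seminorms (exactly as in the proof of Proposition~\ref{propcalculus}), then gives the desired bounds on $|b|_{\si+1,0}$ and $|b|_{\si+1,1}$.

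With $b$ at hand, I would write $\|\Op_b^\tau u\|_{L^2(\mathbb{T})}^2=\langle(\Op_b^\tau)^*\Op_b^\tau u,u\rangle_{L^2(\mathbb{T})}$ and strip off the two error terms furnished by Proposition~\ref{propsemi}. Because $b$ is real, the adjoint estimate gives $(\Op_b^\tau)^*=\Op_b^\tau+E_1$ with $\|E_1\|\le C\tau|b|_{\si+1,1}$, and the composition estimate gives $\Op_b^\tau\Op_b^\tau=\Op_{b^2}^\tau+E_2$ with $\|E_2\|\le C\tau|b|_{\si,1}|b|_{\si+1,0}$ (note $b\in S_{\si+1,1}\subset S_{\si,1}$). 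The term $\langle E_1\Op_b^\tau u,u\rangle$ is $O(\tau)\|u\|_{L^2(\mathbb{T})}^2$ after bounding $\|\Op_b^\tau u\|_{L^2(\mathbb{T})}$ by the $L^2$-continuity estimate, and likewise for $\langle E_2 u,u\rangle$. Since the constant symbol $\tfrac12\delta$ quantizes to $\tfrac12\delta\,\mathrm{Id}$, one has $\Op_{b^2}^\tau=\Op_a^\tau-\tfrac12\delta\,\mathrm{Id}$, and collecting the terms yields
\[
\|\Op_b^\tau u\|_{L^2(\mathbb{T})}^2=\langle\Op_a^\tau u,u\rangle_{L^2(\mathbb{T})}-\tfrac12\delta\|u\|_{L^2(\mathbb{T})}^2+O(\tau)\|u\|_{L^2(\mathbb{T})}^2,
\]
the implied constant depending only on $|a|_{\si+1,0}$, $|a|_{\si+1,1}$ and $\delta$.

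Discarding the nonnegative left-hand side and rearranging then gives
\[
\langle\Op_a^\tau u,u\rangle_{L^2(\mathbb{T})}\ge\tfrac12\delta\|u\|_{L^2(\mathbb{T})}^2-C\tau\|u\|_{L^2(\mathbb{T})}^2,
\]
which is the assertion. The main obstacle is the first step, the stability of $S_{\si+1,0}\cap S_{\si+1,1}$ under composition with $\sqrt{\,\cdot-\tfrac12\delta\,}$; once that symbolic regularity is secured, the remainder is a routine bookkeeping of the three error bounds in Proposition~\ref{propsemi}.
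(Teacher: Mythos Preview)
Your proposal is correct and follows essentially the same approach as the paper: define $b=\sqrt{a-\tfrac12\delta}$, establish $b\in S_{\si+1,0}\cap S_{\si+1,1}$ with seminorms controlled by those of $a$ and by $\delta$, and then use the adjoint and composition estimates of Proposition~\ref{propsemi} to compare $\Op_a^\tau$ with $\tfrac12\delta+(\Op_b^\tau)^*\Op_b^\tau$. The paper likewise identifies the symbolic regularity of $b$ as the main work and carries it out (for $\si=1$) via the Leibniz rule and a reduction to $\|G(a)\|_{L^\infty_\xi(\R,H^2_x(\mathbb{T}))}$ together with \eqref{eq-algebra-plus}, which is precisely the Moser-type argument you invoke.
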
     
     \begin{proof}
      We can write that
\[
a(x, \xi) = \tfrac12 \delta + b(x,\xi)^2, \qquad b(x,\xi)= \bigl( a(x,\xi)- \tfrac12 \delta \bigr)^{1/2}.
\]
 We will show below that,  since $a \geq \delta >0$, we also have that $b \in S_{\si+1,0} \cap S_{\si+1, 1}$  with
\begin{equation}\label{eq-estb}
\abs{b}_{\si+1,0}\le C \myand \abs{b}_{\si+1,1}\le C,
\end{equation}
where $C$ depends only on $|a|_{\si +1, 0}$, $|a|_{\si+1, 1}$ and $\delta$. 
       By using Proposition \ref{propsemi}, we thus get that
\[
\Op_{a}^\tau= \tfrac12 \delta + (\Op_{b}^\tau)^* \Op_{b}^\tau + R^\tau
\]
        with
\[
\|R^\tau u\|_{L^2(\mathbb{T})} \leq C \tau \|u\|_{L^2(\mathbb{T})}.
\]
      The result follows easily.

 It remains to show \eqref{eq-estb}. We restrict here to $\si=1$, which is the value of $\si$ that is needed in Section \ref{sec-error}. The proof for other  values of $\si$ is similar, but with longer formulas. In the following, we write
\[
F(y) = \bigl( y- \tfrac12 \delta \bigr)^{1/2},
\]
such that $b(x,\xi)=F(a(x,\xi))$ and we observe that $F$ is a smooth function on $[\delta, + \infty[$. 

For the first estimate of \eqref{eq-estb}, we start from
\begin{equation*}\label{eq-garding-start}
\abs{b}_{2,0} \le \abs{F(a)}_{0,0} + \absbig{\partial_x^2 F(a)}_{0,0} \le \abs{F(a)}_{0,0} + \absbig{F'(a)\partial_x^2 a}_{0,0} +  \absbig{F''(a)(\partial_x a)^2}_{0,0}.
\end{equation*}
To estimate the products further, we use the estimate $\abs{cd}_{0,0} \le \abs{c}_{0,0} \abs{d}_{1,0}$, which follows from \eqref{eq-algebra1} in the same way as \eqref{eq-symbol-algebra}. This yields
\begin{equation}\label{eq-garding-step1}
\abs{b}_{2,0} \le \abs{F(a)}_{0,0} + \abs{F'(a)}_{1,0} \abs{\partial_x^2 a}_{0,0} +  \abs{F''(a)}_{1,0} \abs{\partial_x a}_{1,0} \abs{\partial_x a}_{0,0}.
\end{equation}
To finish the proof, we just need to explain how to estimate $| G(a)|_{1,0}$ for some smooth $G$ which is smooth
 on the image of $a$.  We start from 
\begin{multline*}
\abs{G(a)}_{1,0} \le \normbig{\mathcal{F}_x \klabig{(1+\partial_x)G(a)}}_{L^2_j(\Z,L_\xi^\infty(\R))}
 \le C \normbig{\mathcal{F}_x \klabig{(1-\partial_x^2) G(a)}}_{L_j^\infty(\Z,L_\xi^\infty(\R))}\\
 \le C \normbig{\mathcal{F}_x \klabig{(1-\partial_x^2) G(a)}}_{L_\xi^\infty(\R,L^2_j(\Z))}
 = C \norm{G(a)}_{L^\infty_\xi(\R,H^2_x(\mathbb{T}))},
\end{multline*}
where the second estimate follows from  
\begin{align*}
 \sum_{j\in\mathbb{Z}} \normbig{ \klabig{\mathcal{F}_x (1+\partial_x)G(a)}(j)}_{L^\infty_\xi(\R)}^2 &= \sum_{j\in\mathbb{Z}} \frac{1+j^2}{(1+j^2)^2} \normbig{ \klabig{\mathcal{F}_x (1-\partial_x^2)G(a)}(j)}_{L^\infty_\xi(\R)}^2\\
&\le \biggl( \sum_{j\in\mathbb{Z}} \frac{1}{1+j^2}\biggr) \sup_{j\in\mathbb{Z}} \normbig{ \klabig{\mathcal{F}_x (1-\partial_x^2)G(a)}(j)}_{L^\infty_\xi(\R)}^2, 
\end{align*}
  and the third estimate follows from interchanging the two $L^\infty$-norms and estimating the $L_j^\infty(\Z)$-norm by the $L_j^2(\Z)$-norm. 

Next, we can use \eqref{eq-algebra-plus} to get that for every $\xi$,
\[ \|G(a) \|_{H^2_{x}(\mathbb{T})} \leq\Lambda(\norm{a}_{H_x^2(\mathbb{T})}))( 1 + \|a\|_{H^2_{x}(\mathbb{T})})\]
where $\Lambda(\cdot)$ stands again for a continuous non-decreasing function that can change from line to line as a stand in for the dependence upon the algebra of the calculus established here. Therefore we finally obtain that
\[ \norm{G(a)}_{L^\infty_\xi(\R,H^2_x(\mathbb{T}))} \leq\Lambda(\norm{a}_{L^\infty_\xi(\R,H^2_x(\mathbb{T}))})(1 +  \|a\|_{L^\infty_\xi(\R,H^2_x(\mathbb{T}))}) 
 \leq  \Lambda(|a|_{2,0} ) . 
\]
 Using this estimate and the above estimate of $\abs{G(a)}_{1,0}$ in \eqref{eq-garding-step1} completes the proof of the first estimate of \eqref{eq-estb}.  The proof of the second estimate of \eqref{eq-estb} is very similar. 
 \end{proof}

\end{appendix}

\subsection*{Acknowledgement}

  We thank the referees for their valuable suggestions and comments.   
This work was supported by Deutsche Forschungsgemeinschaft (DFG) through project GA 2073/2-1 (LG), SFB 1114 (LG) and SFB 1173 (KS) and by National Science Foundation (NSF) under grants DMS-1454939 (JL), DMS-1312874 (JLM) and DMS-1352353 (JLM).

{\small 

}

\end{document}